\documentclass[noams]{compositio}
\usepackage{amsmath,amsfonts,amssymb,amscd}
\usepackage[all]{xypic}

\hyphenation{La-gran-gian}

\newtheorem{thm}{Theorem}[subsection]

\newtheorem{lem}[thm]{Lemma}
\newtheorem{prop}[thm]{Proposition}
\theoremstyle{definition}

\newtheorem{cor}[thm]{Corollary}
\theoremstyle{remark}
\newtheorem{rmk}[thm]{Remark}

\newtheorem{ex}[thm]{Example}

\def\del{\partial}

\def\R{\mathbb{R}}
\def\C{\mathbb{C}}
\def\Z{\mathbb{Z}}

\def\on{\operatorname}
\def\CC{\mathcal C}
\def\cD{\mathcal D}
\def\CE{\mathcal E}
\def\CF{\mathcal F}
\def\CG{\mathcal G}
\def\CH{\mathcal H}
\def\CK{\mathcal K}
\def\CL{\mathcal L}
\def\CN{\mathcal N}
\def\CP{\mathcal P}
\def\CQ{\mathcal Q}
\def\CO{\mathcal O}
\def\CR{\mathcal R}
\def\CS{\mathcal S}
\def\CT{\mathcal T}

\def\ol{\overline}
\def\intHom{{\mathcal Hom}}

%%%%%%%%%%%%%%%%

\def\CB{\mathcal B}

\def\hom{{hom}}

%%%%%%%%%%%%%%%%

\def\CC{\mathcal C}
\def\Sym{\on{Sym}}

%%%%%%%%%%%%%%%%

\def\fU{\mathfrak U}

\def\CM{\mathcal M}
\def\b{\mathfrak b}

\def\g{\mathfrak g}

\def\fL{\mathfrak L}

\def\fP{\mathfrak P}

\def\fu{\mathfrak u}
\def\fh{\mathfrak h}

\def\wt{\widetilde}

%%%%%%%%%%%%%%%%

\def\vareps{{\varepsilon}}
\def\CE{\mathcal E}

%%%%%%%%%%%%%%%%

\def\risom{\stackrel{\sim}{\to}}

\def\wt{\widetilde}

%%%%%%%%%%%%%%%%

\def\Sh{Sh}
\def\Top{Top}

\def\hra{\hookrightarrow}

\def\risom{\stackrel{\sim}{\to}}

\def\Ch{Ch}

\def\Vect{Vect}

\def\w{\wedge}

\def\inv{/ \hspace{-0.25em} /}

\def\interiorsymbol {\on{int}}

\def\Bun{\on{Bun}}

\def\Spec{\on{Spec}}

\def\oo{\infty}

\begin{document}

\title{Springer theory via the Hitchin fibration}
\author{David Nadler}
\email{nadler@math.northwestern.edu}
\address{Department of Mathematics, Northwestern University,
2033 Sheridan Road, Evanston, IL  60208}

\classification{53D37 (primary), 32S60, 22E57 (secondary).}
\keywords{Fukaya category, Fourier transform, Hitchin fibration.}
%\thanks{}

%%%%%%%%%%%%%%%%%%%%%%%%%%%%%%%%%%%%%%%%%%%%%
%%%%%%%%%%%%%%%%%%%%%%%%%%%%%%%%%%%%%%%%%%%%%
%%%%%%%%%%%%%%%%%%%%%%%%%%%%%%%%%%%%%%%%%%%%%

\begin{abstract} 
We develop the Springer theory of Weyl group
representations in the language of symplectic topology.
Given a semisimple complex group $G$,
we describe a Lagrangian brane 
in the cotangent bundle of the adjoint quotient $\g/G$
that produces the perverse sheaves of Springer theory.
The main technical tool is an analysis of the Fourier transform
for constructible sheaves from the perspective of the Fukaya category.
Our results can be viewed as a toy model of the quantization
of Hitchin fibers in the Geometric Langlands program.
\end{abstract}

\maketitle

%%%%%%%%%%%%%%%%%%%%%%%%%%%%%%%%%%%%%%%%%%%%%
%%%%%%%%%%%%%%%%%%%%%%%%%%%%%%%%%%%%%%%%%%%%%
%%%%%%%%%%%%%%%%%%%%%%%%%%%%%%%%%%%%%%%%%%%%%
%
%{\tiny
%\tableofcontents
%}

%%%%%%%%%%%%%%%%%%%%%%%%%%%%%%%%%%%%%%%%%%%%%
%%%%%%%%%%%%%%%%%%%%%%%%%%%%%%%%%%%%%%%%%%%%%
%%%%%%%%%%%%%%%%%%%%%%%%%%%%%%%%%%%%%%%%%%%%%

\section{Introduction}

The primary aim of this paper
is to develop Springer's theory~\cite{Spr76, Spr78, Spr82}
of Weyl group
representations in the language of symplectic topology.
Let $G$ be a semisimple complex group with Lie algebra $\g$,
nilpotent cone $\CN\subset \g$,  
%flag variety $\CB$,
Springer resolution 
$\mu:\tilde\CN \to \CN,$
%$$
%\mu:\tilde\CN =\left\{(\b, X)\in \CB\times \CN| X\in \b\right \}\to \CN
%\qquad
%\mu(\b, X) = X,
%$$ 
and Weyl group $W$.
In a form due to
%and Slodowy~\cite{Sl}
Lusztig~\cite{L} and Borho-MacPherson~\cite{BoMac} (building on the topological setting introduced by Kazhdan-Lusztig~\cite{KL}),
Springer theory identifies the group algebra $\C[W]$
with the (degree zero) endomorphisms of the perverse sheaf $S_\CN=R\mu_*\C_{\tilde\CN}[\dim_\C \CN]$.
We explain here how one can tell this story via 
 the Fukaya category
of the cotangent bundle of the adjoint quotient $\g/G$.
An initial motivation for this is the relative accessibility of objects of the Fukaya category
(smooth Lagrangian submanifolds with structure) versus objects such as perverse
sheaves (complexes of sheaves living on a singular variety). For example,
the object of the Fukaya category corresponding to $S_\CN$ (or to be precise,
its Fourier transform) is a regular fiber of a particular instance of the
Hitchin fibration. The Weyl group action arises from Hamiltonian isotopies
coming from motions of the regular parameter.

The approach to Springer theory adopted here is via the Fourier transform for constructible sheaves.
%(By a constructible sheaf, we will always mean a constructible complex of sheaves.)
According to Ginzburg~\cite{Ginz} and Hotta-Kashiwara~\cite{HK},
the Fourier transform of the Springer sheaf  $S_\CN$ can be identified with the intersection
cohomology of $\g$ with coefficients in the regular $\C[W]$ local system over the regular
semisimple locus $\g_{rs}\subset \g$. 
(Here we have identified $\g$ and its dual $\g^*$ using the Killing form.)
In general, given a finite-dimensional real vector space $V$, 
the Fourier transform exchanges conic constructible sheaves on $V$ and its dual $V^*$.
The Fourier transform $\CF^\w$
of a constructible sheaf $\CF$
 encodes the Morse groups (or vanishing cycles) of $\CF$ along rays through the origin.
When $V$ is a complex vector space, the (shifted) Fourier transform
exchanges conic perverse sheaves on $V$ and its dual $V^*$.
Its structure is most transparent from the perspective of $\cD$-modules
via the Riemann-Hilbert correspondence.
The Fourier transform for $\cD$-modules exchanges conic $\cD_V$-modules with conic 
 $\cD_{V^*}$-modules by the elementary change of variables
$
x\mapsto -\del_x$,
$
\del_x\mapsto x
$,
where $x$ is a coordinate on $V$, and $\del_x$ is the dual coordinate on $V^*$.
In other words, it is nothing more than a ``$90^\circ$ rotation" of the phase space.

There is a dictionary between
constructible sheaves on a manifold and the Fukaya category
of its cotangent bundle (see~\cite{NZ, N}).
The main technical result of this paper describes how the Fourier transform
for constructible sheaves on $V$ and $V^*$
appears from the perspective of the Fukaya category of $V\times V^*$.
We can identify  $V\times V^*$ as a symplectic target with $T^*V$
and also with $T^*V^*$. (If $x, \xi$ denote coordinates on $T^*V$,
$v, \lambda$ coordinates on $V\times V^*$, and $y, \eta$ coordinates on $T^*V^*$,
then our identifications take the form $x = v = -\eta$, $\xi = \lambda = y$.)
In this way, one can associate to a Lagrangian brane $L\hra V\times V^*$ constructible 
 sheaves $\pi_V(L)$ and $\pi_{V^*}(L)$ living on $V$ and $V^*$ respectively.
We introduce a class of Lagrangian branes in $V\times V^*$, called balanced branes,
such that the conic limits of  $\pi_V(L)$ and $\pi_{V^*}(L)$ 
are Fourier tranforms of each other.
Thus for balanced branes, the operation underlying the Fourier transform
is again 
nothing more than a ``$90^\circ$ rotation"  of the target.
With this theory in hand,
we construct the Fourier transform of the Springer sheaf 
 $S_\CN$ by identifying the corresponding balanced brane
 in $T^*\g$. %\simeq\g \times\g^*$.

The primary motivation for this paper is the study of the Hitchin fibration
within the framework of the Geometric Langlands program (see Beilinson-Drinfeld~\cite{BD}
and Kapustin-Witten~\cite{KW}).
Fix a smooth projective complex curve $C$.
Let $ \g^*\inv G =\Spec(\Sym \g)^G$ 
be the affine coadjoint quotient, 
$( \g^*\inv G)_{\omega_C}$ its twist by the canonical
bundle of $C$, and 
$\mathbb B_G(C) = \Gamma(C, (\g^*\inv G)_{\omega_C})$ the Hitchin base,
Consider the moduli $\Bun_G(C)$
of $G$-bundles over $C$, its cotangent bundle $T^*\Bun_G(C)$ with fibers
$$
T_\CP^*\Bun_G(C) =\Gamma(C, \g^*_{\CP} \otimes \omega_C),
\quad
\mbox{ for $\CP\in \Bun_G(C)$},
$$
where $\g^*_{\CP}$ denotes the $\CP$-twist of $\g^*$, and  the Hitchin fibration
$$
\CH:T^*\Bun_G(C)\to \mathbb B_G(C)
\qquad
\CH(\CP, \Phi) = \ol \Phi
$$
induced by the morphism $\g^*_{\CP} \otimes \omega_C\to  ( \g^*\inv G)_{\omega_C}$.
One of the main goals of the Geometric Langlands program is the quantization
of the Hitchin fibers.
Given a parameter $\flat\in \mathbb B_G(C)$, with Hitchin fiber 
$\CL_\flat=\CH^{-1}(\flat) \subset T^* \Bun_G(C)$,
we seek a $\cD$-module on $\Bun_G(C)$ whose ``support" is equal to $\CL_\flat$.
In physical terms, we would like to understand the structure of the A-brane wrapping $\CL_\flat$.

In the toy case when $C$ is a cuspidal elliptic curve, the moduli of semistable
$G$-bundles (equivalently, bundles whose pullback to the normalization $\mathbb P^1$
are trivializable) reduces to $\g/G$, and the
Hitchin fibration reduces to a form of the moment map. 
To simplify the discussion, let us use the Killing form to identify $\g$ and its dual $\g^*$,
and fix a Cartan sublagebra $\fh\subset \g$.
Then under the resulting induced identifications
$$
T^*(\g/G) \simeq\{(x, \xi)\in \g\times \g | [x, \xi] = 0\}/G
\qquad
\g^*\inv  G \simeq \fh\inv W
$$
the restriction of the Hitchin fibration takes the form
$$
\CH: T^*(\g/G)\to \fh\inv W
\qquad
\CH(x, \xi) = \ol \xi
$$
where $\ol \xi\in  \fh\inv W$ denotes the class of $\xi\in \g$.
As an application of our main result,
we show that
for a regular parameter $\lambda\in \fh\inv W$,
the semistable Hitchin fiber $\CL_\lambda =\CH^{-1}(\lambda)\subset T^*(\g/G)$ is the balanced brane
corresponding to the Fourier transform of the Springer sheaf $S_\CN$.
Motions of the regular parameter provide a braid group action
that descends to the usual Weyl group action of Springer theory.

%%%%%%%%%%%%%%%%%%%%%%%%%%%%%%%%%%%%%%%%%%%%%
%%%%%%%%%%%%%%%%%%%%%%%%%%%%%%%%%%%%%%%%%%%%%

\subsection{Outline} Here is a further outline of the contents 
and arguments of the paper.

\medskip

In Section~\ref{sec sheaves}, we develop for our purposes some foundational material on the
constructible derived category $D_c(X)$ of a real analytic manifold $X$.
We explain how to form the standard differential graded (dg) category $Sh_c(X)$
with cohomology category $D_c(X)$. We then collect background material
on the Fourier transform for constructible sheaves. 
To formulate our main theorem, we need to take certain limits of constructible sheaves.
Let $V$ be a real finite-dimensional 
vector space with dilation action $\alpha^t:V\to V$, for scalars $t\in \R^+$.
For $\CF$ a constructible sheaf on $V$, we construct a sheaf $\Upsilon(\CF)$
that formalizes the notion of the conic limit 
$``\lim_{t\to 0} \alpha^t_*(\CF)"$. We describe its sections over certain cones in $V$
as needed in our main theorem.

\medskip

In Section~\ref{sec fukaya}, we review the Fukaya category of the cotangent
bundle $T^*X$ of a compact real analytic manifold $X$ following~\cite{NZ, N}. We review the microlocalization quasi-equivalence
$$
\xymatrix{
\mu_X:Sh_c(X)\ar[r]^{\sim} & F(T^*X)
}$$
from constructible sheaves to the triangulated envelope of the Fukaya category.
We will not need the full import of this result, only that $\mu_X$ is a quasi-embedding.
But we will need the main ingredient in the proof: the invariance of Floer calculations under non-characteristic motions
explained in \cite[Section 3.7]{N}. This is the fact that Hamiltonian
isotopies of noncompact branes during which no critical event occurs
near infinity lead to quasi-isomorphic calculations.

\medskip

Section~\ref{sec fourier for branes} contains the main technical work
of the paper.
Given a real finite-dimensional vector space $V$ with dual $V^*$,
we study noncompact Lagrangian branes $L\hra V\times V^*$.
We describe how such an object $L$ gives rise to constructible sheaves 
$\pi_V(L)$ and $\pi_{V^*}(L)$ on the respective factors $V$ and $V^*$.
We isolate a class of Lagrangian branes, which we call balanced branes,
to which our main theorem applies. 
Recall that for $\CF$ a constructible sheaf on a vector space,
 $\Upsilon(\CF)$ denotes its conic limit $``\lim_{t\to 0} \alpha^t_*(\CF)"$.
Our main result, Theorem~\ref{main thm}, is the following.

\begin{thm}
Let $L\hra V_1\times V_2$ be a balanced brane. The Fourier transform
and its inverse
exchange the conic limits of the corresponding constructible sheaves 
$$
(\Upsilon(\pi_V(L)))^\w \simeq \Upsilon(\pi_{V^*}(L))
\qquad
(\Upsilon(\pi_{V^*}(L))^\vee \simeq \Upsilon(\pi_V(L)).
$$
\end{thm}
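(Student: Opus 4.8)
The plan is to show that the two sides of each equivalence agree after testing against an arbitrary conic open cone, using the microlocalization quasi-embedding $\mu_V$ to translate sheaf-theoretic statements into Floer-theoretic ones about the balanced brane $L$. First I would recall the description, established earlier in Section~\ref{sec sheaves}, of the sections of $\Upsilon(\CF)$ over cones: for a constructible sheaf $\CF$ on $V$ and an open cone $U\subset V$ with polar dual cone $U^\circ\subset V^*$, the stalk of the conic limit in the direction of $U$ computes the Morse group (vanishing cycle) of $\CF$ for a generic covector in $U^\circ$ — equivalently, the value of the Fourier transform $\CF^\w$ on $U^\circ$. So the content of the theorem is that $\pi_V(L)$ and $\pi_{V^*}(L)$, which are built from the same Lagrangian $L$ via the two projections $T^*V\cong V\times V^*\cong T^*V^*$, have conic limits whose cone-sections are literally swapped by the polar-duality combinatorics $U\leftrightarrow U^\circ$ — and this swap is exactly the Fourier transform.

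The key steps, in order: (i) fix a conic open cone $U\subset V$ and reduce both $\Upsilon(\pi_V(L))|_U$ and $(\Upsilon(\pi_{V^*}(L)))^\w|_U$ to Floer complexes. Using the dictionary, $\pi_V(L)$ is the image of $L$ under $\mu_V^{-1}$ (on its essential image), so its conic limit over $U$ is computed by $\hom_{F(T^*V)}(L, L_U)$ for a suitable test brane $L_U$ supported near $U$ at infinity — concretely a "linear" Lagrangian corresponding to the standard sheaf on $U$. (ii) Do the same on the $V^*$ side: $(\Upsilon(\pi_{V^*}(L)))^\w|_U$ unwinds, via the standard computation of the Fourier transform in terms of vanishing cycles, to $\hom_{F(T^*V^*)}(L, L'_{U^\circ})$ where $L'_{U^\circ}$ is the test brane attached to the cone $U^\circ\subset V^*$. (iii) Now compare the two Floer computations inside the single symplectic manifold $V\times V^*$ under the identifications $x=v=-\eta$, $\xi=\lambda=y$ from the introduction: I claim the test branes $L_U$ and $L'_{U^\circ}$ coincide up to a non-characteristic Hamiltonian isotopy. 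This is where the balanced hypothesis on $L$ enters decisively: balancedness is precisely the condition that controls the behavior of $L$ at infinity in both the $V$- and $V^*$-directions, so that neither the $90^\circ$ rotation nor the isotopy from $L_U$ to $L'_{U^\circ}$ crosses the (mutual) characteristic directions of $L$. (iv) Invoke the invariance of Floer calculations under non-characteristic motions (the main ingredient of the microlocalization theorem, stated in Section~\ref{sec fukaya}) to conclude $\hom(L,L_U)\simeq\hom(L,L'_{U^\circ})$, hence the first equivalence; the second follows symmetrically, or formally by applying the first to the Fourier-dual data and using $(\CF^\w)^\vee\simeq\CF$.

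The main obstacle I expect is step (iii): making precise the claim that the balanced condition guarantees the passage between the two cotangent-bundle pictures — a genuine $90^\circ$ rotation of $V\times V^*$ — is realized by a Hamiltonian isotopy that is non-characteristic for $L$ throughout. A priori the rotation sweeps through all intermediate slopes, so one cannot rotate rigidly; instead one must build the isotopy adapted to $L$, staying in the complement of $L$'s directions at infinity, and the whole point of isolating balanced branes is that this complement is large enough (in both factors simultaneously) for such an isotopy to exist. A secondary technical point is bookkeeping of gradings and orientation/brane data so that the Floer isomorphisms are as graded objects, not merely up to shift; this should follow from the conventions fixed in Section~\ref{sec fourier for branes} but must be checked against the degree shifts built into the perverse normalization of the Fourier transform. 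Once step (iii) is in place, steps (i), (ii), (iv) are applications of results already available in the excerpt, and the symmetry between $V$ and $V^*$ makes the second equivalence essentially free.
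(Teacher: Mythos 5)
Your outline matches the paper's strategy: test both sides of the identity against open convex cones and their polars, translate sheaf pairings into Floer pairings via $\mu_{V_1}$ and $\mu_{V_2}$, and then compare the two Floer calculations inside the single target $V_1\times V_2$ using invariance under non-characteristic motions. But you have correctly diagnosed that step~(iii) is the crux, and you leave it unresolved --- that is exactly where the paper's technical content lies. The paper does not abstractly isotope ``$L_U$ to $L'_{U^\circ}$''; it introduces a concrete \emph{pivot brane}
$$
P(\vareps_x,\vareps_\xi)=\bigl\{(x,\xi)\in\R^n_x\times\R^n_\xi\ \big|\ x_i>\vareps_x,\ (x_i-\vareps_x)(\xi_i-\vareps_\xi)=1\bigr\},
\qquad \vareps_x<0,\ \vareps_\xi>0,
$$
and exploits the crucial observation that this one hyperbola-type Lagrangian is \emph{simultaneously} the standard brane $L_{Q^n_x(\vareps_x)*}$ of an open quadrant in $T^*\R^n_x$ and the costandard brane $L_{Q^n_\xi(\vareps_\xi)!}$ of an open quadrant in $T^*\R^n_\xi$. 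Both of the test branes you want ($R(\vareps_x,r_x)$ on the $V$-side, $T(\vareps_\xi,r_\xi)$ on the $V^*$-side) are then reached from $P(\vareps_x,\vareps_\xi)$ by separate non-characteristic families, and the common Floer complex $\hom_F(P(\vareps_x,\vareps_\xi),L(\delta_x,\delta_\xi))$ identifies the two sides. Without exhibiting something playing the role of this pivot, the claim in your step~(iii) is exactly the theorem restated, not a proof of it.

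There is a second gap you have glossed over: the conic-limit functor $\Upsilon$. You treat $\Upsilon(\pi_V(L))$ and $\Upsilon(\pi_{V^*}(L))$ as if one could pair them directly against $L$, but $\Upsilon$ is a limit $t\to 0$ and $L$ need not be conic in either factor. The paper handles this by dilating to $L(\delta_x,\delta_\xi)=\alpha_1^{\delta_x}\alpha_2^{\delta_\xi}L$ and combining Lemma~\ref{lem sections calc} (sections of the conic limit over a shifted cone equal sections of $\alpha^\delta_*\CF$ over a shifted cone) with Proposition~\ref{prop dilate} (dilating the brane along one factor dilates the projected sheaf on that factor and leaves the other unchanged). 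This dilation is not merely bookkeeping: it is what forces the boundary of $L(\delta_x,\delta_\xi)$ to cluster near the coordinate axes at infinity, and only then --- using balancedness to rule out the corners --- are the isotopies from $P$ to $R$ and from $P$ to $T$ actually non-characteristic and Proposition~\ref{prop brane non-char in general} applicable. Your remark that balancedness alone makes ``the complement large enough in both factors simultaneously'' is not quite right; it is balancedness \emph{plus} the double dilation that achieves this, and your outline never performs the dilation.
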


In its most succinct form, the proof of the theorem takes the following shape.
One observes that a pair of constructible sheaves are exchanged by the Fourier 
transform if  and only if certain
 invariants of the sheaves are exchanged. Namely, the sections of the Fourier transform 
 of a sheaf over an
 open convex cone are equal to the sections of the original sheaf within the closed polar cone.
 One shows that for a pair of sheaves
coming from a single brane, these invariants
can be understood as a single invariant within the Fukaya category.
Namely, for a given pair of an open convex cone and its closed polar cone, 
there is a single brane such that pairing with it
realizes both spaces of sections. The argument involves a delicate application of the 
invariance of Floer calculations under
non-characteristic motions.
To help the reader, we include a version of the argument
in the case $\dim V = 1$ where one can gently acclimate to some of the intricacies
involved.

\medskip

Finally, in Section~\ref{sec springer}, we apply the preceding theory to the adjoint
quotient $\g/G$. We first give a brief synopsis of Springer theory in the language
of perverse sheaves.
We then review the standard formalism for working with the cotangent 
bundle of a stack such as $\g/G$. We then introduce the Hitchin fibration
in the case of the cotangent bundle $T^*(g/G)$. Finally, we show that
its regular fibers define balanced branes in $T^*(g/G)$. We use our main result
to deduce that these branes give rise to the perverse sheaves of Springer theory.

%%%%%%%%%%%%%%%%%%%%%%%%%%%%%%%%%%%%%%%%%%%%%

%%%%%%%%%%%%%%%%%%%%%%%%%%%%%%%%%%%%%%%%%%%%%
%%%%%%%%%%%%%%%%%%%%%%%%%%%%%%%%%%%%%%%%%%%%%
%%%%%%%%%%%%%%%%%%%%%%%%%%%%%%%%%%%%%%%%%%%%%

\subsection{Acknowledgements}
It is a pleasure to thank Eric Zaslow for his generous interest and encouragement.
I am grateful to Pierre Schapira for helpful remarks on the exposition.
I am also grateful to an anonymous referee for careful, detailed comments on the proofs and the foundations they rely upon.
Finally, I would like to acknowledge the support of NSF grant DMS-0600909 and a Sloan Research
Fellowship.

%%%%%%%%%%%%%%%%%%%%%%%%%%%%%%%%%%%%%%%%%%%%%
%%%%%%%%%%%%%%%%%%%%%%%%%%%%%%%%%%%%%%%%%%%%%
%%%%%%%%%%%%%%%%%%%%%%%%%%%%%%%%%%%%%%%%%%%%%

%%%%%%%%%%%%%%%%%%%%%%%%%%%%%%%%%%%
%%%%%%%%%%%%%%%%%%%%%%%%%%%%%%%%%%%%%%%%%%%%%
%%%%%%%%%%%%%%%%%%%%%%%%%%%%%%%%%%%%%%%%%%%%%
%%%%%%%%%%%%%%%%%%%%%%%%%%%%%%%%%%%%%%%%%%%%%

\section{Constructible sheaves}\label{sec sheaves}

In this section, we first review background material on the constructible derived category,
then recall a differential graded model of it. Finally, we review the Fourier-Sato transform.

%%%%%%%%%%%%%%%%%%%%%%%%%%%%%%%%%%%%%%%%%%%%%%

\subsection{Derived category}
In this section, we briefly recall the construction of the constructible derived category of
a real analytic manifold.
For a comprehensive treatment of this topic, the reader could consult
the book of Kashiwara-Schapira~\cite{KS}. 
%Our choice of organization and  emphasis is dictated by the variations we will need in what follows.

\medskip
Let $X$ be a topological space. Let $\Top(X)$ be the category whose objects
are open sets $U\hra X$, and morphisms are 
inclusions $U_0 \hookrightarrow U_1$ of open sets:
$$
\hom_{\Top(X)} (U_0, U_1)
=
\left\{
\begin{array}{cl}
pt & \mbox{ when $U_0 \hra U_1$,} \\
\emptyset & \mbox{ when $U_0 \not\hra U_1$.}
\end{array}
\right.
$$

\medskip

Let $\Vect$ be the abelian category of complex vector spaces.

\medskip

The derived category of sheaves of complex vector spaces on $X$ is 
traditionally defined
via the following sequence of constructions:

\medskip
\noindent
1. {\em Presheaves}. Presheaves on $X$ are functors $\CF: \Top(X)^\circ\to \Vect$
where $\Top(X)^\circ$ denotes the opposite category. Given an open set $U\hra X$,
one writes $\CF(U)$ for the sections of $\CF$ over $U$, and given an inclusion $U_0 \hra U_1$
of open sets, one writes $\rho^{U_1}_{U_0}: \CF(U_1)\to \CF(U_0)$ for the
corresponding restriction map.

\medskip
\noindent
2. {\em Sheaves}. Sheaves on $X$ are presheaves $\CF: \Top(X)^\circ\to \Vect$
which are locally determined in the following sense.
For any open set $U\hra X$,
and covering $\fU=\{U_i\}$ of $U$ by open subsets $U_i\hra U$, there is a complex of vector spaces
$$
\xymatrix{
0 \ar[r] & \CF(U) \ar[r]^-{\delta} & \prod_i \CF(U_i) \ar[r]^-{\delta_0} & \prod_{i,j} \CF(U_i\cap U_j),
}
$$ 
where $\delta = \prod_i \rho^U_{U_i}$ and 
$\delta_0 = \prod_{i,j} \left (\rho^{U_i}_{U_i\cap U_j} - \rho^{U_j}_{U_i\cap U_j}\right )$.
A sheaf is a presheaf for which $\ker(\delta) = \ker(\delta_0)/\on{im}(\delta)=0$ for all
open sets 
and coverings of open sets.
%
%The obvious functor from sheaves to presheaves admits a left adjoint called sheafification.
%Thus to any presheaf $\CF$, there is associated a sheaf $\CF^+$ along with
%a universal map of presheaves $\CF\to \CF^+$. 

\medskip

Sheaves on $X$ form an abelian category and thus one can continue with
the following sequence of general homological constructions:

\medskip
\noindent
3. {\em Complexes}. Let $C(X)$ be the abelian category of complexes of sheaves on $X$
with morphisms the degree zero chain maps.
Given a complex of sheaves $\CF$, one writes $H(\CF)$ for
the (graded) cohomology sheaf of $\CF$.

\medskip
\noindent
4. {\em Homotopy category}. Let $K(X)$ be the homotopy category of sheaves on $X$
with objects complexes of sheaves and morphisms homotopy classes of maps.
This is a triangulated category whose distinguished triangles are isomorphic to 
the standard mapping cones.

\medskip
\noindent
5. {\em Derived category}. The derived category $D(X)$ of sheaves on $X$
is defined to be the localization of $K(X)$ with respect to homotopy classes of quasi-isomorphisms (maps
inducing isomorphisms on cohomology).
Acyclic objects form a null system in $K(X)$, and thus $D(X)$ inherits the structure of 
triangulated category.

\medskip

With the derived category $D(X)$ in hand, one can define many variants by imposing
topological and homological conditions on objects.

\medskip
\noindent
6. {\em Bounded derived category}.
The bounded derived
category $D^\flat(X)$ is defined to be the full subcategory of $D(X)$ of bounded complexes.

Two standard equivalent descriptions are worth keeping in mind: first,
there is the more flexible description of $D^\flat(X)$ as
the full subcategory of $D(X)$ of complexes with bounded cohomology; second, 
there is the computationally useful description of
$D^\flat(X)$ as the homotopy category
of complexes
of injective sheaves with bounded cohomology.

\medskip
\noindent
7. {\em Constructibility}.  Assume $X$ is a real analytic manifold.
Fix an analytic-geometric category
$\CC$ in the sense of~\cite{vdDM}. For example, one could take $\CC(X)$ to be the
subanalytic subsets of $X$ as described in~\cite{BieMi}.

Let $\CS=\{S_\alpha\}$ be a Whitney stratification of $X$ by $\CC$-submanifolds
$i_\alpha: S_\alpha\hra X$.
An object $\CF$ of $D(X)$ is said to be $\CS$-constructible 
if the restrictions
$
i_\alpha^*H(\CF)
$
of its cohomology sheaf
to the strata of $\CS$ are finite-rank and locally constant.

The $\CS$-constructible derived category $D_\CS(X)$ is the full subcategory of $D(X)$ of $\CS$-constructible objects.
The constructible derived category $D_c(X)$ is the full subcategory of $D(X)$ of objects which are $\CS$-constructible for some Whitney stratification $\CS$.

Note that if the stratification $\CS$ is finite (for example, if $X$ is compact), then the finite-rank condition implies that all $\CS$-constructible objects have bounded cohomology. In other words,
within $D(X)$, 
every object of $D_\CS(X)$ is isomorphic to an object of $D^\flat(X)$.

%%%%%%%%%%%%%%%%%%%%%%%%%%%%%%%%%%%%%%%%%%%%

\subsection{Differential graded category}
The derived category $D(X)$ is naturally the cohomology category of a differential
graded (dg) category $Sh(X)$. To define it, we will return to the sequence of 
homological constructions 
listed above and perform some modest changes. 
Two principles guide 
such definitions: (1) structures (such as morphisms and higher exts) should be defined at the level
of complexes not their cohomologies; 
and (2) properties (such as constructibility) should be imposed at the level of cohomologies rather than
complexes. The first principle ensures we will not lose important information,
while the second ensures we will have sufficient flexibility.
As an example of the latter, we prefer the realization of the bounded derived
category $D^\flat(X)$ as the full subcategory of $D(X)$ of complexes with bounded cohomologies
rather than of strictly bounded complexes.
%its realization as the localization of the homotopy category $K^\flat(X)$ of
%bounded complexes.

The reader could consult~\cite{Drinfeld, KellerICM} 
for background on dg categories, in particular,
a discussion of the construction of dg quotients.
%Given a dg category $\CA$, we write $H(\CA)$ for its ordinary cohomology category.

\medskip

Recall that sheaves on $X$ form an abelian category. The following sequence of
homological constructions can be performed on any abelian category:

\medskip
\noindent
1. {\em Dg category of complexes}. Let $C_{dg}(X)$ be the dg category
with objects complexes of sheaves and morphisms the usual complexes of maps between complexes.
In particular, the degree zero cycles in such a morphism complex are the usual degree zero
chain maps which are the morphisms of the ordinary category $C(X)$.

\medskip
\noindent
2. {\em Dg derived category}. The dg derived category $Sh(X)$
is defined to be the dg quotient of $C_{dg}(X)$ by the full subcategory of acyclic objects.
This is a triangulated dg category whose
cohomology category $H(Sh(X))$ is canonically equivalent (as a triangulated category) to the usual derived category $D(X)$.

\medskip

One can cut out full triangulated dg subcategories of $Sh(X)$ by specifying
full triangulated subcategories of its cohomology category $H(Sh(X))\simeq D(X)$. 

\medskip
\noindent
3. {\em Bounded dg derived category}.
The bounded dg
derived category $Sh^\flat(X)$ is defined to be the full dg subcategory of $Sh(X)$ 
of objects projecting to $D^\flat(X)$.

\medskip
\noindent
4.  {\em Constructibility}. 
 Assume $X$ is a real analytic manifold, and 
fix an analytic-geometric category $\CC$.
The constructible dg derived category $Sh_{c}(X)$ is the full dg subcategory of $Sh(X)$
of objects projecting
to $D_c(X)$. 
For a Whitney stratification $\CS$ of $X$, the $\CS$-constructible dg derived category 
$Sh_{\CS}(X)$ is the full dg subcategory of $Sh(X)$
of objects projecting
to $D_\CS(X)$.

\medskip

The formalism of Grothendieck's six (derived) operations $f^*, f_*, f_!, f^!, \intHom, \otimes$
can be lifted to the constructible dg derived category $Sh_c(X)$ 
(see for example~\cite{Drinfeld} for a general discussion of deriving
functors in the dg setting). In our case, one concrete approach
is to recognize that the natural map $C_{dg, c}(\mathfrak{Inj}(X)) \to Sh_c(X)$ 
from the dg category $C_{dg, c}(\mathfrak{Inj}(X))$ of complexes of injective sheaves
with constructible cohomology is a quasi-equivalence.
With this in hand, one can define derived functors by evaluating their
naive versions on $C_{dg, c}(\mathfrak{Inj}(X))$.
Since we will only consider derived functors, we will denote them by the above unadorned symbols.

\medskip

Throughout the remainder of this paper, we
fix an analytic-geometric category $\CC$. All subsets will be $\CC$-subsets
unless otherwise stated.

%%%%%%%%%%%%%%%%%%%%%%%%%%%%%%%%%%%%%%%%%%%%%

\subsection{Fourier transform for sheaves}

We recall here the Fourier-Sato transform following~\cite[Section 3.7]{KS}.
We will describe the general parameterized version over a base manfold $X$,
though our application
will involve only the absolute version over $X=pt$.

When working with constructible complexes 
on a noncompact manifold $E$,
it is often technically convenient to fix a relative compactification $i:E\hra \ol E$
and work with constructible complexes on $\ol E$.
In the case of a vector bundle $E\to X$,
we will always take the relative spherical compactification
$$
\ol E =\left( (E\times  \R^{\geq 0}) \setminus (X \times \{0\})\right)/ \R_+.
$$ 
By a constructible complex on $E$, we will mean a complex $\CF$ on $E$ such that
its extension $i_*\CF$ to $\ol E$ is constructible.
Throughout this section, we will abuse notation and write $Sh_c(E)$ for the full dg subcategory of such 
complexes.
Note in particular that for the natural dilation $\R^+$-action on $E$, any $\R^+$-equivariant 
complex on $E$
that is constructible in the usual sense is also constructible in the above extended sense.

\subsubsection{Definition of Fourier transform}
Let $\pi_1:E\to X$ be a real finite-rank vector bundle, and let $\pi_2:E^*\to X$
be the dual vector bundle.
For $e\in E$ and $e^* \in E^*$, let $\langle e, e^*\rangle\in \R$ denote the natural
pairing.

Consider the Cartesian diagram of vector bundles 
$$
\xymatrix{
 & \ar[ld]_-{p_1} E \times_X E^*  \ar[rd]^-{p_2}& \\
 E  \ar[rd]_-{\pi_1} & & E^*  \ar[ld]^-{\pi_2} \\
 & X &
}
$$

Consider the closed subset of non-positive pairs
$$
\xymatrix{
\kappa: K=\left\{
(e, e^*)\in E\times_X E^* | \langle e,  e^*\rangle \leq 0 \right\}
\ar@{^(->}[r]  & E\times_X  E^*
}
$$
and define the integral kernel 
$$
\CK=\kappa_!\C_K\in Sh_c(E \times_X E^*).
$$
Define integral transforms by the formulas
$$
\Phi_K: Sh_c(E) \to Sh_c(E^*)
\qquad
\Psi_K: Sh_c(E^*) \to Sh_c(E)
$$
$$
\CF^\w=\Phi_\CK(\CF) =  p_{2!}(\CK\otimes p_1^*\CF) = p_{2!}\kappa_!\kappa^*p_1^*\CF
$$
$$
\CG^\vee=\Psi_\CK(\CG) = p_{1*}\intHom(\CK, p_2^!\CG)
= p_{1*}\kappa_*\kappa^!p_2^{!}\CG
$$
By usual formalism (independent of the particular kernel $\CK$), 
the functors form an adjoint pair $(\Phi_\CK, \Psi_\CK)$.

\subsubsection{Conic sheaves}
Let $\R^+\subset \R^\times$ denote the multiplicative group of positive real numbers.
For a compact manifold $X$ with an $\R^+$-action, let $\alpha:\R^+\times X\to X$ be the action map,
$X/\R^+$ the quotient stack, 
and $p:X\to X/\R^+$ the natural projection.

\medskip

We refer to $Sh_c(X/\R^+)$ as the $\R^+$-equivariant constructible dg derived category.
Since $\R^+$ is contractible,
$\R^+$-equivariance is a property not a structure.
Thus pullback (or in other words, the forgetful functor) $p^*:Sh_c(X/\R^+)\to Sh_c(X)$ identifies $Sh_c(X/\R^+)$ with the full subcategory of $Sh_c(X)$
of $\R^+$-equivariant objects.
We will often refer to $\R^+$-equivariant objects as conic objects.

We have the induction functors
$$
\gamma=p_![-1]: Sh_c(X) \to Sh_c(X/\R^+)
\qquad
\Gamma=p_*:Sh_c(X) \to Sh_c(X/\R^+)
$$
By usual formalism, $\gamma$ is a left adjoint to the forgetful
functor $p^*$, and $\Gamma$ is right adjoint. (For the former, we have used
that $\R^+$ is smooth, one dimensional, and has a canonical ``positive" orientation.)

\medskip

We will next describe an alternative way to construct conic objects by
 taking the limit at zero of ordinary ones.
To informally describe this, let $\alpha^t:X\to X$ denote the action $\alpha:\R^+\times X\to X$
evaluated at $t\in \R^+$.
Then given an object $\CF\in Sh_c(X)$,
we would like to make sense of the limit $\lim_{t\to 0} \alpha^t_*\CF$ as an object
of $Sh_c(X)$.

To make this precise, let $\R^{\geq 0}\subset \R$ denote the non-negative real numbers.
Consider the diagram
$$
\xymatrix{
X & \ar[l]_-{\alpha} X\times \R^{+} \ar@{^(->}[r]^-{i_+}  & X\times \R^{\geq 0} & \ar@{_(->}[l]_-{j_0} 
X \times\{0\}\simeq X
}
$$
where $\alpha$ is the action map, and $i_+$ and $j_0$ are the obvious inclusions.
There is a natural $\R^+$-action on the above diagram where $\R^+$ acts trivially
on the $X$ at the left, diagonally on $ X\times \R^{+}$ and  $X\times \R^{\geq 0}$ and by the action $\alpha$
on the $X$ at the right.
Thus we can pass to the quotient diagram
$$
\xymatrix{
X
%\simeq X \times pt/\R^+
& \ar[l]_-{\alpha} (X\times \R^{+})/\R^+ \ar@{^(->}[r]^-{i_+}  & (X\times \R^{\geq 0})/\R^+ 
& \ar@{_(->}[l]_-{j_0} X\times\{0\}/\R^+ \simeq X/\R^+
}
$$
%where we have used $pt/\R^+\simeq pt$ since $\R^+$ is contractible.

Now we define the limit functor to be the composition
$$
\Upsilon= j_0^*i_{+*}\alpha^*:Sh_c(X) \to Sh_c(X/\R^+)
$$

Let us relate this construction to the usual nearby cycles functor.
Suppose $X$ is a complex manifold, and the $\R^+$-action on $X$ extends to a $\C^\times$-action
$\alpha:X\times\C^\times \to X$.
Then we can use the action map $\alpha$ together with the nearby cycles 
$$
R\psi:Sh_c(X\times \C^\times) \to Sh_c(X),
$$ 
with respect to the canonical projection
$X\times \C \to \C$,
to define 
a functor
$$
R\psi\circ \alpha^*[1]:Sh_c(X) \to Sh_c(X).
$$
In fact, since the action map $\alpha$ is naturally $\C^\times$-equivariant
for the trivial action on $X$ and diagonal action on $X\times\C^\times$,
and the nearby cycles $R\psi$ makes sense in the equivariant setting,
 the functor $R\psi\circ \alpha^*[1]$ canonically lifts to 
a functor $Sh_c(X)\to Sh_c(X/\C^\times)$.

With the usual conventions (so that $R\psi$ preserves perverse sheaves),
the limit construction and nearby cycles construction are equivalent

$$
\Upsilon\simeq R\psi\circ \alpha^*[1].
$$

%%%%%%%%%%%%%%%%%%%%%%%%%%%%%%%%%%%%%%%%%%%%%

\subsubsection{Properties of Fourier transform}
Consider the natural $\R^+$-action on the vector bundles $E$, $E^*$.
Since $K\subset E\times_X E^*$ is $\R^+\times \R^+$-invariant,
and so $\CK=\kappa_!\C_K$ is $\R^+\times \R^+$-equivariant,
the Fourier transforms $\Phi_\CK$, $\Psi_\CK$ land in the full subcategories
of conic objects.

Here are two main properties of the Fourier transforms:

\smallskip

(1) \cite[Theorem 3.7.9]{KS}
The restrictions of the Fourier transforms $\Phi_\CK$, $\Psi_\CK$ 
to the full subcategories $Sh_{c}(E/\R^+)$, $Sh_{c}(E^*/\R^+)$ 
of conic objects
are inverse equivalences.

\smallskip

(2) \cite[Proposition 10.3.18]{KS}
 Suppose $X$ is a complex manifold and $E$ is a complex vector bundle.
Then the
restrictions of the shifted Fourier transforms $\Phi_\CK[\dim_\C E]$, $\Psi_\CK[-\dim_\C E]$ 
to the full subcategories $Perv(E/\R^+)$, $Perv(E^*/\R^+)$ 
of conic perverse sheaves
are inverse equivalences.

\smallskip

Finally, the Fourier transforms are compatible with equivariantization:
by standard identities among functors, there are canonical quasi-isomorphisms
$$
\Phi_\CK(\gamma(\CF)) \simeq \Phi_\CK(\CF)
\qquad
\Psi_\CK(\Gamma(\CF)) \simeq \Psi_\CK(\CF)
$$

%%%%%%%%%%%%%%%%%%%%%%%%%%%%%%%%%%%%%%%%%%%%%

\subsubsection{Characterizing calculations}

It will be useful to recall the explicit result of evaluating the Fourier transform
on basic objects. In particular, in the proof of our main theorem,
we will need the following elementary calculation.

\medskip

Recall that small open convex subsets $b:B\hra E$ generate the topology of $E$.
To identify an object $\CF\in Sh_c(E)$, it suffices to know its sections
$\Gamma(B, \CF) \simeq \hom_{Sh_c(E)}(b_!\C_B, \CF)$
over small open convex subsets, along with the restriction maps among the sections.

By a cone $u:U\hra E$, we will mean an $\R^+$-invariant subset
(or in other words, the inverse image of a subset $U/\R^+\hra E/\R^+$).
To identify an object $\CF\in Sh_c(E/\R^+)$, it suffices to know its sections
$\Gamma(U, \CF) \simeq \hom_{Sh_c(E)}(u_!\C_U, \CF)$
over open convex cones, along with the restriction maps among the sections.

\medskip

We have the following useful description (see~\cite[Proposition 3.7.12]{KS}) 
of the sections of the Fourier transform
over convex open subsets.

For $u:U\hra E^*$ a subset, define the closed polar cone 
$$
v: U^\circ =\left\{e \in E\times_X \pi_2(U)  |
\langle e, e^* \rangle \geq 0
\mbox{ for all } e^* \in  \{\pi_1(e)\}\times_X U \right\} \hra E.
$$
Now if $u:U\hra E^*$ is an open convex cone 
with polar cone $v:U^\circ\hra E$ then 
(see~\cite[Proposition 3.7.12]{KS}) there is a quasi-isomorphism
$$
\pi_{2*}u^*\Phi_\CK(\CF)\simeq
\pi_{1*}v^{!}\CF,
\quad \mbox{ for all $\CF\in Sh_{c}(E/\R^+)$.}
$$
Note that $U$ is open so $u^*\simeq u^!$, and $U^\circ$ is closed so $u_* \simeq u_!$.
By adjunction, we can interpret this as a quasi-isomorphism of morphism complexes
$$
\hom_{Sh_c(E^*)}(u_!\C_U, \Phi_\CK(\CF)) \simeq
\hom_{Sh_c(E)}(v_*\C_{U^\circ}, \CF)
$$
In particular, one can deduce the above identification from the calculation
$
\Phi_\CK(v_*\C_{U^\circ}) \simeq u_!\C_{U}
$
(see~\cite[Lemma 3.7.10]{KS}).

Furthermore, an inclusion of open convex cones $U_0 \hra U_1 \hra E^*$,
leads to an inclusion morphism $u_{0!}\C_{U_0} \to u_{1!}\C_{U_1}$,
and an inclusion morphism $v_{1!}\omega_{U_1^\circ} \to v_{0!}\omega_{U_0^\circ}$.  
In turn, for all $\CF\in Sh_{c}(E/\R^+)$, these induce
a commutative (at the level of cohomology) square 
$$
\xymatrix{
\hom_{Sh_c(E^*)}(u_{1!}\C_{U_1}, \Phi_\CK(\CF)) \ar[r]^-\sim \ar[d] & \ar[d]
\hom_{Sh_c(E)}(v_{1*}\C_{U_1^\circ}, \CF)\\
\hom_{Sh_c(E^*)}(u_{0!}\C_{U_0}, \Phi_\CK(\CF)) \ar[r]^-\sim &
\hom_{Sh_c(E)}(v_{0*}\C_{U_0^\circ}, \CF)
}
$$

In conclusion, we see that for $\CF\in Sh_{c}(E/\R^+)$,
the Fourier transform $\Phi_\CK(\CF)$ is characterized
by the above calculations. 

\medskip

Before moving on, it will be convenient to go one step further
and note the following. Suppose $u:U\hra E^*$ is an open convex cone but does not
contain any fiber of $E^*$. Then the interior $\interiorsymbol  (v):\interiorsymbol  (U^\circ)\hra E$
of the closed polar cone $v:U^\circ\hra E$ is an open convex submanfold.
We have a quasi-isomorphism $v_*\C_{U^\circ} \simeq \interiorsymbol  (v)_*\C_{\interiorsymbol  (U^\circ)}$,
and thus a quasi-isomorphism
$$
\hom_{Sh_c(E^*)}(u_!\C_U, \Phi_\CK(\CF)) \simeq
\hom_{Sh_c(E)}(\interiorsymbol  (v)_*\C_{\interiorsymbol  (U^\circ)}, \CF),
$$
for all $\CF\in Sh_{c}(E/\R^+)$. 
(Also, when $u:U\hra E^*$ equals all of $E$, the closed polar cone $v:U^\circ\hra E$
is the zero section $X\hra E$.) On both sides of the above identification,
we have a pairing with the standard or costandard sheaf of a submanifold.
It is this form of the calculation that we will use to characterize the Fourier transform.

%%%%%%%%%%%%%%%%%%%%%%%%%%%%%%%%%%%%%%%%%%%%%

\subsubsection{Sections of limit}
For future reference, we collect here the calculation of the sections
of the limit $\Upsilon(\CF)\in Sh_c(\R^n/\R^+)$ of an object $\CF\in Sh_c(\R^n)$
with respect to the dilation $\R^+$-action.

For $t\in \R^+$, let $\alpha^t:\R^n \to \R^n$ denote the dilation $\R^+$-action.
For each $\varepsilon\in \R$, consider the open quadrant 
$$
q^\varepsilon: Q^\varepsilon = \{ (x_1, \ldots, x_n)\in \R^n | x_i > \varepsilon\} \hra \R^n.
$$
We will write $q:Q\hra \R^n$ for the open cone $q^0:Q^0\hra \R^n$. 
Note that linear transformations of $Q$,
together with $\R^n$ itself, 
form a basis for the conic topology of $\R^n$.

\begin{lem}\label{lem sections calc}
For any object $\CF\in Sh_c(\R^n)$, and any $\varepsilon\in \R^+$, there exists 
$\delta(\vareps)\in \R^+$ 
such that for any $\delta\in (0,\delta(\vareps))$, there are canonical identifications 
$$
%\Gamma(Q; \CF) = 
\hom_{Sh_c(E)}(q_!\C_Q , \Upsilon(\CF))
\simeq
\hom_{Sh_c(E)}(q^\varepsilon_!\C_{Q^\epsilon} , \alpha^\delta_*(\CF))
%=
%\Gamma(Q^{\vareps}; \alpha^\delta_*(\CF))
$$
$$
%\Gamma_{\ol Q}(E; \CF) = 
\hom_{Sh_c(E)}(q_*\C_Q , \Upsilon(\CF))
\simeq
\hom_{Sh_c(E)}(q^{-\varepsilon}_*\C_{Q^{-\epsilon}}, \alpha^\delta_*(\CF))
%=
%\Gamma_{\ol Q^{-\vareps}}(E; \alpha^\delta_*(\CF))
$$
\end{lem}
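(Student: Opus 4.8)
The plan is to unwind the definition $\Upsilon = j_0^* i_{+*}\alpha^*$ and compute the two mapping complexes by a base-change plus a cofinality argument for the direct limit over shrinking neighborhoods of $0$. First I would observe that since $\Upsilon(\CF)$ is conic, its sections over a cone $U$ are governed by the value of $i_{+*}\alpha^*\CF$ in an arbitrarily small punctured neighborhood of $X\times\{0\}$ inside $(X\times\R^{\geq 0})/\R^+$. Concretely, pulling back along $\alpha$ converts dilation into the diagonal $\R^+$-action, so $\alpha^*\CF$ restricted to $X\times\{t\}$ is $\alpha^t{}^*\CF \simeq \alpha^{1/t}_*\CF$ up to the conic identification; thus a section of $i_{+*}\alpha^*\CF$ near $t=0$ records, as a filtered object, the sections of $\alpha^\delta_*\CF$ for all small $\delta$. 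The point of the constructibility hypothesis is that this filtered system stabilizes: there is a Whitney stratification of $X\times\R^{\geq 0}$ adapted to $\alpha^*\CF$ and to the closed cone $Q\times\R^{\geq 0}$ (resp. $Q^{-\vareps}\times\R^{\geq 0}$), so the local picture near $X\times\{0\}$ is locally constant in $t$ for $t$ in some interval $(0,\delta(\vareps))$.

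The main steps I would carry out are: (i) Reduce both identities to a statement about the stalk (or sections over a small cone) of $i_{+*}\alpha^*\CF$ along $X\times\{0\}$, using $\hom(q_!\C_Q, -) = \Gamma(Q, -)$ and $\hom(q_*\C_Q,-) = \Gamma_Q(Q', -)$ for a slightly larger cone $Q'$, i.e. recognizing the costandard pairing as sections-with-support. (ii) Apply proper (resp. smooth) base change to the square relating the inclusion $i_+$ and the projections, to rewrite $\Gamma(Q, j_0^* i_{+*}\alpha^*\CF)$ as $\varinjlim_{\delta\to 0}\Gamma(Q\times(0,\delta), \alpha^*\CF)$, and then transport via $\alpha$ to $\varinjlim_{\delta\to 0}\Gamma(Q, \alpha^\delta_*\CF)$ — noting carefully that the cone $Q$ is $\R^+$-invariant so the transport of domains is clean, whereas for the costandard statement the support condition forces a shift of the cone from $Q$ to $Q^{-\vareps}$ (the tubular-neighborhood-of-$0$ correction), since points of $Q^\circ$-type supports can migrate toward the boundary under dilation. (iii) Invoke constructibility to produce $\delta(\vareps)$ such that the restriction maps in the filtered colimit are isomorphisms for $\delta < \delta(\vareps)$, so the colimit is achieved by any single such $\delta$; this is where $\vareps$ enters, controlling how far the stratification's "wiggle room" lets us push the boundary of the quadrant. (iv) For the second identity, run the same argument with $\Gamma_Z$ for the closed set $Z = \overline{Q^{-\vareps}}$ or its appropriate conic analogue, using that $q_*\C_Q \simeq q^{-\vareps}_*\C_{Q^{-\vareps}}$ up to the conic equivalence is exactly the ``interior of the polar cone'' phenomenon recorded in the previous subsection, and that $i_{+*}$ commutes with the relevant $\Gamma_Z$.

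The hard part will be step (iii) together with the precise bookkeeping in step (ii): pinning down the exact relationship between the open quadrant $Q$ at scale $0$, the fattened quadrant $Q^{-\vareps}$, and the scale $\delta$, i.e. showing that the ``critical events'' of the stratification of $\alpha^*\CF$ relative to these quadrant families occur only at a discrete set of radii bounded away from $0$, so that a well-defined $\delta(\vareps)$ exists. This is a non-characteristic-type statement — morally the same invariance principle invoked for Floer calculations in Section~\ref{sec fukaya}, but here on the sheaf side — and it is most cleanly argued via Thom's first isotopy lemma applied to the stratified map $(x,t)\mapsto t$ together with the family of quadrant walls, ruling out tangencies for small $t$. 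Once that finiteness is in hand, the colimit collapses and the two canonical identifications drop out by adjunction; I expect the remaining verifications (that the identifications are canonical and compatible with restriction) to be formal.
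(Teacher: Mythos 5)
Your proposal follows essentially the same route as the paper: reduce to a colimit of sections over shrinking neighborhoods of $X\times\{0\}$, then use a stratification of $\R^n\times\R^{\geq 0}$ adapted both to $\alpha^*\CF$ and to the quadrant walls, and invoke Thom's first isotopy lemma to conclude the critical values of the projection to $\R^{\geq 0}$ are discrete, so the colimit stabilizes below some $\delta(\vareps)$. One small point to keep in view when carrying this out: you must extend the stratification to the spherical compactification $\ol\R^n\times\R^{\geq 0}$ (and use that the stratification is by $\CC$-sets) to get the properness that makes ``discrete critical values'' actually available.
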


\begin{proof}

Recall that $i:\R^n\hra \ol \R^n$  denotes the spherical compactification,
and by assumption, there is a stratification $\CS$ of 
$\ol \R^n$ such that $i_*\CF$
is $\CS$-constructible.
%All of our arguments should be understood to take place in the compactification.

Consider the $\R^+$-conic stratification $C(\CS)$ of $\R^n \times \R^+$ obtained
by taking the cone over $\CS$ together with the origin.
Observe that the object $i_{+*}\alpha^*\CF$ 
appearing in the definition of $\Upsilon(\CF)$ is $C(\CS)$-constructible.

Fix any $\vareps_0\in \R^+$.
Let $\CQ^{\pm\vareps_0}$ be the stratification of $\R^n$ given by the facets of $Q^{\pm\vareps_0}$,
and let 
$\CQ^{\pm\vareps_0}\times \{\R^{+}, \{0\}\}$ be the product stratification of $\R^n \times \R^{\geq 0}$.  

Let $\CR^{\pm\vareps_0}$ be a stratification of $\R^n\times\R^{\geq 0}$, that extends to $\ol \R^n\times \R^{\geq 0}$,
and refines $C(\CS)$ and 
$\CQ^{\pm\vareps_0}\times \{\R^{+}, \{0\}\}$.
The restriction of the projection $\R^n\times \R^{\geq 0} \to \R^{\geq 0}$ to the strata of $\CR^{\pm\vareps_0}$
will have discrete critical values. In particular, there exists $\delta(\vareps_0)\in \R^+$
such that there are no critical values in the interval $(0,\delta(\vareps_0))$.
Thus by the Thom Isotopy Lemma, there is a stratified homeomorphism of
$\ol \R^n \times (0,\delta(\vareps_0))$ taking the restriction of the stratification $\CR^{\pm\vareps_0}$
to a product stratification.

Observe that the above constructions behave well with respect to dilation.
For $t\in \R^+$ and $\vareps = t\vareps_0$, we can take the dilation
of the above stratifications and homeomorphism for $\vareps_0$. In particular, 
for $\delta(\vareps) = t\delta(\vareps_0)$,
 there is a stratified homeomorphism of
$\ol \R^n \times (0,\delta(\vareps))$ taking the restriction of the stratification $\CR^{\pm\vareps}$
to a product stratification.

With the above system of open neighborhoods in hand,
standard sheaf identities establish the identifications for any $\delta\in (0,\delta(\vareps))$.
\end{proof}
%%%%%%%%%%%%%%%%%%%%%%%%%%%%%%%%%%%%%%%%%%%%%
%%%%%%%%%%%%%%%%%%%%%%%%%%%%%%%%%%%%%%%%%%%%%
%%%%%%%%%%%%%%%%%%%%%%%%%%%%%%%%%%%%%%%%%%%%%

%%%%%%%%%%%%%%%%%%%%%%%%%%%%%%%%%%%%%%%%%%%%%
%%%%%%%%%%%%%%%%%%%%%%%%%%%%%%%%%%%%%%%%%%%%%
%%%%%%%%%%%%%%%%%%%%%%%%%%%%%%%%%%%%%%%%%%%%%

\section{Microlocal branes}\label{sec fukaya}
In this section, we review some basic aspects of the Fukaya category of a cotangent bundle $T^*X$
of a compact real analytic manifold $X$ following~\cite{NZ, N}.
In particular, we recall the quasi-equivalence between constructible sheaves on $X$
and the triangulated envelope of the Fukaya category of $T^*X$.
For the foundations of the Fukaya category, we refer the reader to the fundamental
work of Fukaya-Oh-Ohta-Ono~\cite{FOOO} and Seidel~\cite{Seidel}.

%%%%%%%%%%%%%%%%%%%%%%%%%%%%%%%%%%%%%%%%%%%%%

\subsection{Preliminaries}\label{sect prels}

In what follows, we work with a fixed compact real analytic manifold $X$
with cotangent bundle $\pi:T^*X\to X$. We often denote points of $T^*X$ by pairs
$(x,\xi)$ where $x\in X$ and $\xi\in T^*_x X$.
The material of this section is a condensed version
of the discussion of~\cite{NZ}. 

\medskip

Let $\theta\in\Omega^1(T^*X)$ denote the canonical one-form $\theta(v) =\xi(\pi_*v)$, for
$v\in T_{(x,\xi)}(T^*X)$,  and let 
$\omega =d\theta\in\Omega^2(T^*X)$ denote the 
canonical symplectic structure. 
For a fixed Riemannian metric on $X$, let $|\xi|:T^*X\to \R$ denote
the corresponding fiberwise linear length function.

%%%%%%%%%%%%%%%%%%%%%%%%%%%%%%%%%%%%%%%%%%%%%%%

\subsubsection{Compactification} To better control noncompact Lagrangians
in $T^*X$, it is useful to work with 
the cospherical compactification $\ol\pi:\ol T^*X\to X$ of the projection
$\pi:T^*X\to X$ obtained by attaching the cosphere bundle at infinity $\pi^\infty:T^\infty X\to X$.

Concretely, we can realize the compactification $\ol T^*X$ as the quotient
$$
\ol T^*X = \left((T^* X \times \R_{\geq 0})\setminus (X \times \{0\})\right)/ \R_+$$
where
$\R_{+}$ acts 
by dilations on both factors.
The canonical inclusion $T^*X\hookrightarrow \ol T^* X$ sends
a covector $\xi$ to the class of $[\xi,1]$.
The boundary at infinity 
$T^\infty X=\ol T^*X\setminus T^*X
$ 
consists of classes of the form $[\xi, 0]$ with $\xi$ a non-zero covector.
Given a Riemannian metric on $X$, one can identify
$\ol T^*X$ with the closed unit disk bundle $D^*X$,
and $T^\infty X$ with the unit cosphere bundle  $S^* X$,
via the map
$$[\xi,r]\mapsto (\hat \xi, \hat r),
\mbox{ where }|\hat\xi|^2 + \hat r^2=1.
$$

The boundary at infinity $T^\infty X$ carries a canonical contact 
distribution $\kappa \subset T (T^\infty X)$ with a well-defined notion of positive normal direction.
Given a Riemannian metric on $X$, under the induced identification of $T^\infty X$
with the unit cosphere bundle $S^* X$, the distribution $\kappa$ is
the kernel of the restriction of $\theta$.

%%%%%%%%%%%%%%%%%%%%%%%%%%%%%%%%%%%%%%%%%%%%%%%%

\subsubsection{Conical almost complex structure}
\label{warp}
To better control holomorphic disks in $T^*X$, it is useful to work with 
an almost complex structure $J_{con}\in\on{End}(T(T^*X))$ 
which near infinity is invariant under dilations.

A fixed Riemannian metric on $X$ provides a canonical splitting
$
T(T^*X) \simeq T_{b} \oplus T_{f},
$
where $T_b$ denotes the horizontal base directions and $T_f$ the 
vertical fiber directions,
along with a canonical isomorphism 
$
j_0:T_b {\to} T_f
$
of vector bundles over $T^*X$.
We refer to the resulting almost complex structure
$$
J_{Sas}=
\left(
\begin{matrix}
0 & j_0^{-1} \\
-j_0 & 0
\end{matrix}
\right)
\in\on{End}(T_b \oplus T_f).
$$
as the Sasaki almost complex structure,
since by construction, the Sasaki metric on $T^*X$
is given by $g_{Sas}(v,v) = \omega(u, J_{Sas} v)$.

Fix positive constants $r_0, r_1>0$,
a bump function $b: \R\to \R$ such that $b(r) = 0$ for $r <r_0$, and $b(r) = 1$, for $r>r_1$,
and
set 
$w(x,\xi) = |\xi|^{b(|\xi|)},
$ where as usual
$|\xi|$ denotes the length of a covector with respect to the original
metric on $X$. 
We refer to the compatible almost complex structure 
$$
J_w=
\left(
\begin{matrix}
0 & w^{-1} j_0^{-1} \\
-w j_0 & 0
\end{matrix}
\right)
\in\on{End}(T_b \oplus T_f)
$$
as a(n asymptotically) conical almost complex structure
since near infinity $J_{con}$ is invariant under dilations.
The corresponding metric $g_{con}(u,v) = \omega(v, J_{con} v)$ presents
$T^*X$ near infinity as a metric cone over the unit {cosphere} bundle $S^*X$
equipped with the Sasaki metric.

One can view the conical metric $g_{con}$ as being compatible with the compactification $\ol T^*X$
in the sense that near infinity
it treats base and angular fiber directions on equal footing. Near infinity the metrics
on the level sets of $|\xi|$ are given by scaling the Sasaki metric
on the unit cophere bundle by the factor $|\xi|^{1/2}$.

%%%%%%%%%%%%%%%%%%%%%%%%%%%%%%%%%%%%%%%%%%%%%%%

\subsection{Brane structures}\label{sec branes}
By a Lagrangian $j:L\hookrightarrow T^*X$, we mean a closed (but not necessarily
compact) half-dimensional submanifold such that $TL$ is isotropic for the symplectic form $\omega$.
One says that $L$ is exact if the pullback of the one-form
$j^*\theta$ is cohomologous to zero. 

\medskip

By a brane structure on a Lagrangian ${L} \hra{T}^*X$, we mean a three-tuple
$
(\CE, \tilde\alpha,\flat)
$
consisting of a flat (finite-dimensional) vector bundle $\CE\to L$,
along with a grading $\tilde\alpha:L\to \R$ (with respect to the canonical
bicanonical trivialization) and a relative pin structure $\flat$ (with respect to the background class
$\pi^*(w_2(X))$. To remind the interested reader,
we include below a short summary of what the latter two structures entail.

%%%%%%%%%%%%%%%%%%%%%%%%%%%%%%%%%%%%%%%%%%%%%%%

\subsubsection{Gradings}
The almost complex structure $J_{con}\in\on{End}(T(T^*X))$ provides a holomorphic canonical bundle
$
\kappa= (\wedge^{\dim X}T^{hol}(T^*X))^{-1}.
$
According to~\cite{NZ}, there is a canonical trivialization $\eta^2$ of
the bicanonical bundle $\kappa^{\otimes 2}$ (and a canonical trivialization of $\kappa$
itself
if $X$ is assumed oriented).
Consider the bundle of Lagrangian planes ${\mathcal Lag}_{T^*X}\to T^*X$,
and the squared phase map
$$
\alpha : {\mathcal Lag}_{T^*X} \rightarrow U(1)
$$
$$
\alpha(\mathcal L)= \eta(\wedge^{\dim X} \mathcal L)^2/ |\eta(\wedge^{\dim X} \mathcal L)|^2.
$$

For a Lagrangian $L\hra  T^*X$ and a point $x\in L$, we obtain a map $\alpha:L\to U(1)$
by setting $\alpha(x)=\alpha(T_xL).$
The Maslov class $\mu(L)\in H^1(L,\Z)$
is the obstruction class
$
\mu = \alpha^*(dt),
$ 
where $dt$ denotes the standard one-form on $U(1)$.
Thus $\alpha$ has a lift to a map $\widetilde\alpha:L\to \R$
if and only if $\mu = 0$, and choices of a lift form a torsor over the group $H^0(L,\Z)$.
Such a lift $\widetilde \alpha:L\to \R$ is called a {grading} of the Lagrangian $L\hra T^*X$.

%%%%%%%%%%%%%%%%%%%%%%%%%%%%%%%%%%%%%%%%%%%%%%%%%

\subsubsection{Relative pin structures}
Recall that the group $Pin^+(n)$ is the double cover of
$O(n)$ with center $\Z/2\Z \times \Z/2\Z.$
A pin structure on a Riemannian manifold $L$ is a lift
of the structure group of $TL$ to $Pin^+(n).$
The obstruction to a pin structure is the second
Stiefel-Whitney class $w_2(L)\in H^2(L,\Z/2\Z)$,
and choices of pin structures form a torsor over the group
$H^1(L,\Z/2\Z)$. 

A relative pin structure on a
submanifold $L \hookrightarrow M$ with background class $[w]\in H^2(M,\Z/2\Z)$
can be defined as follows.  Fix a {\v C}ech cocycle $w$ representing $[w]$,
and let $w|_L$ be its restriction to $L$. Then a pin structure on $L$ relative to $[w]$ can be
defined to be  
an $w|_L$-twisted pin structure on $TL$. Concretely, this can be represented by a $Pin^+(n)$-valued
\v Cech $1$-cochain on $L$ whose coboundary is $w|_L$. 
Such structures are canonically independent of the choice of \v Cech representatives.

For Lagrangians $L\hra T^*X$, we will always consider relative pin structures $\flat$ on $L$
with respect to the fixed background class $\pi^*(w_2(X))\in H^2(T^*X,\Z/2\Z)$.

%%%%%%%%%%%%%%%%%%%%%%%%%%%%%%%%%%%%%%%%%%%%%

\subsection{Fukaya category}\label{sect Fukaya category}
We recall here the construction of the Fukaya $A_\infty$-category 
of the cotangent bundle $T^*X$ of a compact real analytic manifold $X$. 
Our aim is not to review all of the details, but only those relevant to our later proofs.
For more details, the reader could consult~\cite{NZ} and the references therein.

%%%%%%%%%%%%%%%%%%%%%%%%%%%%%%%%%%%%%%%%%%%%%

\subsubsection{Objects}

An object of the Fukaya category of $T^*X$ is a four-tuple
$
(L, \CE, \tilde\alpha,\flat)
$
consisting of an exact (not necessarily compact) closed Lagrangian submanifold ${L} \hra{T}^*X$
equipped with a brane structure:
this includes a flat vector bundle $\CE\to L$,
along with a grading $\tilde\alpha:L\to \R$ 
(with respect to the canonical
bicanonical trivialization) and a relative pin structure $\flat$ (with respect to the background class
$\pi^*(w_2(X))$. 

To ensure reasonable behavior near infinity, we place two assumptions on the
Lagrangian $L$.
First, 
consider the compactification $\ol T^*X$ obtained by adding to $T^*X$ the cosphere bundle
at infinity $T^\infty X$. 
Then we fix an
analytic-geometric category $\CC$ once and for all, and
assume that
the closure $\ol L \hra  \ol T^*X$ is a $\CC$-subset. Along with other nice properties, 
this implies the following two key facts: 
\begin{enumerate}
\item The boundary at infinity 
$$
L^\infty =\ol L\cap T^\infty X
$$
is an isotropic subset of $T^\infty X$ with respect to the induced contact structure.
\item There is a real number $r>0$
such that the restriction of the length function 
$$|\xi| :L \cap \{|\xi| > r\}\to \R$$
has no critical points.
\end{enumerate}
As discussed below, the above properties guarantee we can make sense of ``intersections at infinity".

Second, to have a manageable theory of pseudoholomorphic maps with boundary on
such Lagrangians,
we also assume the existence of a perturbation $\psi$ that moves the initial
Lagrangian $L$ to a nearby Lagrangian tame (in the sense of~\cite{Sikorav})
with respect to the conical metric $g_{con}$. 
As explained in the Appendix of~\cite{N}, all such perturbations lead to equivalent calculations, though not necessarily
by the most direct comparison of equations.
For a recollection of the basic idea, see Section 3.5 below.
It is worth commenting that the above encompasses all restrictions placed on the objects of the Fukaya category studied herein.
In particular, though it is often convenient to think of an object as ``asymptotically conical", the proofs do not appeal to such an independent notion,
only the technical assumptions above.

We use the term Lagrangian brane to refer to objects of the Fukaya category.
When there is no chance for confusion,
we often write $L$ alone to signify the Lagrangian brane.

%%%%%%%%%%%%%%%%%%%%%%%%%%%%%%%%%%%%%%%%%%%%%

\subsubsection{Morphisms}

To define the morphisms between two branes, we must perturb Lagrangians
so that their intersections occur in some bounded domain.
To organize the perturbations, we recall the inductive
notion of a fringed set $R_{d+1}\subset \R^{d+1}_+$.
A fringed set $R_1\subset\R_+$ is any interval of the form $(0,r)$ for some $r>0$. 
A fringed set $R_{d+1}\subset\R_+^{d+1}$ is a subset satisfying the following:
\begin{enumerate}
\item $R_{d+1}$ is open in $\R^{d+1}_+$.
\item Under the projection $\pi:\R^{d+1}\to \R^d$ forgetting the last coordinate, 
the image $\pi(R_{d+1})$ is a fringed set.
\item If $(r_1,\ldots, r_d, r_{d+1})\in R_{d+1}$, then $(r_1,\ldots, r_d, r'_{d+1})\in R_{d+1}$
for $0<r'_{d+1}< r_{d+1}$.
\end{enumerate}

A Hamiltonian function $H:T^*X\to \R$ is said to be {controlled} if there is a real number
$r>0$ such that in the region $|\xi|>r$ we have $H(x,\xi)=|\xi|$. 
The corresponding Hamiltonian isotopy 
$\varphi_{H,t}:T^*X\to T^*X$ equals the normalized geodesic flow $\gamma_t$
in the region $|\xi|>r$.

As explained in~\cite{NZ},
given Lagrangians branes $L_0,\ldots, L_d\subset T^*X$, 
and controlled Hamiltonian functions $H_0,\ldots, H_d$,
we may choose a fringed set $R\subset \R^{d+1}$
such that for $(\delta_d,\ldots, \delta_0)\in R$, there is a real number $r>0$
such that for any $i\not = j$, we have
$$
\varphi_{H_i,\delta_i}(\ol L_i)\cap 
\varphi_{H_j,\delta_j}(\ol L_j) 
\quad
\mbox{lies in the region $|\xi|<r$.}
$$
By a further compactly supported Hamiltonian perturbation,
we may also arrange so that the intersections are transverse.

We consider finite collections of Lagrangian branes $L_0,\ldots, L_d\subset T^*X$ 
to come equipped with such perturbation data,
with the brane structures $(\CE_i, \tilde\alpha_i, \flat_i)$ and taming perturbations $\psi_i$ 
transported via the perturbations. Note that the latter makes sense since the normalized
geodesic flow $\gamma_t$ is an isometry of the metric $g_{con}$.
Then for branes $L_i,L_j$ with $i<j$,
the graded vector space of morphisms between them is defined to be
$$
{hom}_{F(T^*X)}(L_i,L_j) = \bigoplus_{p\in \psi_i(\varphi_{H_i,\delta_i}(L_i))\cap 
\psi_j(\varphi_{H_j,\delta_j}(L_j)) }
{\mathcal Hom}(\mathcal E_i \vert_{p},\mathcal E_j\vert_{p})[-\deg(p)].
$$
where the integer
$\deg(p)$ denotes the Maslov grading of the linear
Lagrangian subspaces at the
intersection.

It is worth emphasizing that near infinity the salient aspect of the above perturbation procedure 
is the relative position of the perturbed branes rather than their absolute position.
The following informal viewpoint
can be a useful mnemonic to keep the conventions straight.
In general, we always think of morphisms as ``propagating forward in time".
Thus 
to calculate the morphisms $\hom_{F(T^*X)}(L_0, L_1)$,
we have required that $L_0,L_1$ are perturbed near infinity by normalized geodesic flow
so that $L_1$ is further in the future than $L_0$. 
But what is important is not that they are both perturbed forward in time,
only that $L_1$ is further along the timeline than $L_0$. So for example,
we could perturb $L_0,L_1$ near infinity by normalized anti-geodesic flow
as long as $L_0$ is further in the past than $L_1$.

%%%%%%%%%%%%%%%%%%%%%%%%%%%%%%%%%%%%%%%%%%%%%

\subsubsection{Compositions}

Signed counts of
pseudoholomorphic polygons provide the differential and higher composition maps
of the $A_\infty$-structure. 
We use the following approach of Sikorav~\cite{Sikorav}
(or equivalently, Audin-Lalonde-Polterovich~\cite{ALP})
to ensure that the relevant moduli spaces are compact,
and hence the corresponding counts are finite.

First, as explained in \cite{NZ}, the cotangent bundle $T^*X$ equipped
with the canonical symplectic form $\omega$,
conical almost complex structure $J_{con}$, and conical metric $g_{con}$ is tame
in the sense of ~\cite{Sikorav}. To see this, one can verify that $g_{con}$
is conical near infinity, and so it is easy to derive an upper bound
on its curvature and a positive lower bound on its injectivity radius.

Next, given a finite
collection of branes $L_0,\ldots, L_d$, denote by $L$ the union of their perturbations 
$\psi_i( \varphi_{H_i,\delta_i}(L_i))$ as described above.
By construction, the intersection
of $L$ with the region $|\xi|>r$ is a tame submanifold 
(in the sense of ~\cite{Sikorav})
with respect to the structures
$\omega$, $J_{con}$, and $g_{con}$.
Namely, 
there exists $\rho_L>0$ such
that for every $x\in L$, the set of
points $y\in L$ of distance $d_{}(x,y) \leq \rho_L$ is
contractible, and
there exists $C_L$ giving a two-point distance condition
$d_L(x,y) \leq C_L d_{}(x,y)$  whenever $x,y\in L$ with $d_{}(x,y)<\rho_L$.

Now, consider a fixed topological type of
pseudoholomorphic map
$$
u:(D,\del D) \to (T^*X, L).
$$
Assume that all $u(D)$ intersect a fixed compact region,
and there is an a priori area bound ${\rm Area}(u(D))< A$.
Then as proven in \cite{Sikorav}, one has compactness of the moduli space
of such maps $u$. In fact, one has a diameter bound (depending only on the given constants)
constraining how far the image $u(D)$ can stretch from 
the compact set.

In the situation at hand, for a given $A_\infty$-structure constant, we must consider
pseudoholomorphic maps $u$ from polygons with labeled boundary edges.
In particular, all such maps $u$ have image
intersecting the compact set given by a single intersection point. 
The area of the image $u(D)$ can be expressed
as the contour integral 
$$
{\rm Area}(u(D)) = \int_{u(\del D)} \theta.
$$
Since each of the individual Lagrangian branes making up $L$ is exact,
the contour integral only depends upon the integral of $\theta$ along minimal paths between 
intersection points. Thus such maps $u$ satisfy an a priori
area bound. We conclude that
 for each $A_\infty$-structure constant, 
the moduli space defining the structure constant
is compact, and its points are represented by maps $u$ 
with image bounded by a fixed distance from any of the intersection points.

Finally, as usual, the composition map 
$$
m^d:
{hom}_{F(T^*X)}(L_0,L_1)\otimes\dots\otimes{hom}_{F(T^*X)}(L_{d-1},L_d)\rightarrow
{hom}_{F(T^*X)}(L_0,L_d)[2-d]
$$ 
is defined as follows.  Consider elements 
$p_i\in {hom}(L_i,L_{i+1}),$
for $i=0,\ldots,d-1$,
and $p_d\in hom(L_0,L_d)$.
Then the coefficient of $p_d$ in 
$m^d(p_0,\dots,p_{d-1})$
is defined to be the signed sum over pseudoholomorphic maps from a disk
with $d+1$ counterclockwise cyclically ordered
marked points mapping to the $p_i$ and corresponding
boundary arcs mapping to the perturbations of $L_{i+1}.$  Each map contributes
according to the holonomy of its boundary, where adjacent perturbed
components $L_i$ and $L_{i+1}$ are glued with $p_i.$

Continuation maps with respect to families of perturbed branes ensure the consistency of
all of our definitions. For a recollection of the basic idea, see Section~\ref{sect inv} below.

One should also note that it is not immediately evident that we have a well-defined $A_\oo$-category. Bounding the behavior of each moduli space involves fixing the complexity of the input data of which branes, intersection points, and structure constants are in play. There are potentially (at least) two more or less equivalent ways to proceed. The first is very formal and somewhat standard: as explained by Stasheff~\cite{S}, there is a family of operads called $A_n$ which parameterize ``partial" $A_\oo$-structures. Compatible $A_n$-categories, for all $n$, provide an $A_\oo$-category since the $A_\oo$-operad is the union of the $A_n$-operads. 
Second, and more geometrically,  to make any given calculation, we may need to insist upon smaller and smaller perturbation data. This leads one to think of each object as the ``limit" of a brane under smaller and smaller perturbation data.
Either formulation can be implemented with the invariance of Floer calculations established in~\cite{NZ, N}, and reviewed in Section~\ref{sect inv} below. For an alternative geometric (rather than homotopical) approach, one could consult Oh's paper~\cite{Oh} for a detailed analysis of the relevant pseudoholomorphic disk theory.

\medskip

Consider the dg category of right modules over the Fukaya category of $T^*X$.
Throughout this paper, 
we write $F(T^*X)$ for the
the full subcategory
of twisted complexes of representable modules,
and refer to it  as the triangulated envelope of the Fukaya category.
We use the term Lagrangian brane to refer to an object of the Fukaya category,
and brane to refer to an object of its triangulated envelope $F(T^*X)$.

%%%%%%%%%%%%%%%%%%%%%%%%%%%%%%%%%%%%%%%%%%%%%%%%%
%%%%%%%%%%%%%%%%%%%%%%%%%%%%%%%%%%%%%%%%%%%%%

\subsection{Microlocalization}\label{sect micro}

We review here the microlocalization quasi-equivalence
constructed in~\cite{NZ}. 
Some useful notation: for a function $m:X\to \R$ and number $r\in R$, we write $X_{m = r}$
for the subset $\{x\in X | m(x) = r\}$ and similarly for inequalities.

\medskip

Let $i: U\hra X$ be an open
submanifold that is a $\CC$-subset of $X$.
Since the complement $X\setminus U$ is a closed $\CC$-subset of $X$, 
we can find a non-negative function
$m:X\to \R_{\geq 0}$ such that $X\setminus U$ is precisely the zero-set of $m$.
Since the complement of the critical values of $m$ form an open $\CC$-subset of $\R$,
 the subset $X_{m >\eta}$
is an open submanifold with smooth hypersurface boundary $X_{m =\eta} $,
for any sufficiently small $\eta >0$.

\medskip

Now let $i_{\alpha}: U_\alpha\hra X$, for $\alpha=0,\ldots, d$,
be a finite collection of open
submanifolds that are $\CC$-subsets of $X$.
Fix non-negative function
$m_\alpha:X\to \R_{\geq 0}$,  for $\alpha=0,\ldots, d$, 
such that $X\setminus U_\alpha$ is precisely the zero-set of $m_\alpha$.
There is a fringed set $R\subset \R^{d+1}_+$ such that 
for any $(\eta_d,\ldots, \eta_0)\in R$, the following holds. First the hypersurfaces 
$X_{m_\alpha =\eta_\alpha}$ are all transverse. Second, for $\alpha<\beta$,
there is a quasi-isomorphism 
of complexes
$$
\hom_{Sh_c(X)}(i_{\alpha*}\C_{U_\alpha}, i_{\beta*}\C_{U_\beta})
\simeq
(\Omega(X_{m_\alpha\geq \eta_\alpha}\cap X_{m_\beta>\eta_\beta}, 
X_{m_\alpha=\eta_\alpha}\cap X_{m_\beta>\eta_\beta}),d)
$$
where $(\Omega, d)$ denotes the relative de Rham complex which calculates the
cohomology of the pair. Furthermore, the composition of morphisms in $Sh_c(X)$
corresponds to the wedge product of forms.

\medskip

%Next we can reinterpret the above relative de Rham complexes in terms of Morse theory.
Next let 
$f_\alpha:X_{m_\alpha > \eta_\alpha}\to \R$,  for $\alpha=0,\ldots, d$, 
be the logarithm $f_\alpha = \log m_\alpha$.
While choosing the sequence of parameters $(\eta_d, \ldots, \eta_0)$, we can also
choose a sequence of small positive parameters $(\epsilon_d, \ldots, \epsilon_0)$ such that 
the following holds. 
For any $\alpha <\beta$, consider the open submanifold 
$X_{m_\alpha >\eta_\alpha,m_\beta>\eta_\beta} = X_{m_\alpha >\eta_\alpha}\cap 
X_{m_\beta>\eta_\beta} $ with corners equipped with the function
$f_{\alpha, \beta} = \epsilon_\beta f_\beta - \epsilon_\alpha f_\alpha$.
Then there is an open set of Riemannian metrics on $X$ such that for all $\alpha<\beta$,
it makes sense to consider the Morse complex
$\CM(X_{m_\alpha >\eta_\alpha,m_\beta>\eta_\beta}, f_{\alpha,\beta})$,
and
there is a quasi-isomorphism
$$
(\Omega(X_{m_\alpha\geq \eta_\alpha}\cap X_{m_\beta>\eta_\beta}, 
X_{m_\alpha=\eta_\alpha}\cap X_{m_\beta>\eta_\beta}),d)
\simeq
\CM(X_{m_\alpha >\eta_\alpha,m_\beta>\eta_\beta}, f_{\alpha,\beta}).
$$
Furthermore, homological perturbation theory provides a quasi-isomorphism
between the $A_\infty$-composition structure on the collection of Morse complexes
and the dg structure given by the wedge product of forms.

\medskip

Finally, we 
define the microlocalization functor
$$
\xymatrix{
\mu_X:\Sh_c(X)\ar[r] & F(T^*X)
}
$$ 
as follows. The standard objects
$i_*\C_U$ associated to open submanifolds $i:U\hra X$ generate the constructible
dg derived category $Sh_c(X)$. Thus to construct $\mu_X$, it suffices to find a 
parallel collection of standard objects of $F(T^*X)$.

Given an open submanifold $i:U\hra  X$
and function $m:X\to\R_{\geq 0}$ with zero-set the complement $X\setminus U$,
define
the {standard Lagrangian} $L_{U,f*}\hra T^*X\vert_U $ to be
the graph
$$
L_{U,f*}= \Gamma_{df},
$$
where $df $ denotes the differential of the logarithm $f=\log m$.

The standard Lagrangian $L_{U,f*}$ comes
equipped with a canonical brane structure $(\CE, \tilde\alpha, \flat)$ and taming perturbation $\psi$. 
Its flat vector bundle $\CE$ is trivial, and
its grading $\tilde \alpha$ and relative pin structure $\flat$ are the canonical structures on a graph.
Its taming perturbation $\psi$ is given by the family of standard Lagrangians 
$$
L_{X_{m = \eta}, f_\eta *} = \Gamma_{df_\eta},
\quad
\mbox{ for sufficiently small $\eta>0$},
$$
where $f_\eta =\log m_\eta$ is the logarithm of the shifted function $m_\eta = m -\eta$.

Now one can extend the fundamental result of Fukaya-Oh~\cite{FO} identifying Morse
moduli spaces and Fukaya moduli spaces to the current setting. Namely, one can show
that for any finite ordered collection of open
submanifolds
$i_{\alpha}: U_\alpha\hra X$, for $\alpha=0,\ldots, d$, and any finite collection
of $A_\infty$-compositions respecting the order, there is a fringed set $R\subset \R^{d+1}$
such that for any parameters $(\eta_d,\ldots, \eta_0)\in R$,
the Morse moduli spaces
of the ordered collection of functions $f_{\eta_\alpha}$ are isomorphic
to the Fukaya moduli spaces of the ordered collection of standard branes $L_{X_{m=\eta}, f_\eta*}$
(after further variable dilations of the functions and branes).

Once and for all, for each $U\subset X$, let us choose 
a non-negative  
function $m:X\to \R_{\geq 0 }$ such that the complement $X\setminus U$
is the zero-set of $m$. 
We denote the resulting standard brane $L_{U, f*}$, where $f = \log m$, by the abbreviated notation $L_{U*}$.
We define the functor $\mu_X$ so that on objects we have
$$
\mu_X(i_*\C_U) = L_{U*}
$$ 

%\begin{rmk}
The arguments of \cite{NZ} outlined above show that calculations among standard branes are equivalent to calculations among corresponding standard sheaves. In particular, given $U\subset X$, there is no preference as to which function
$m:X\to \R_{\geq 0}$ is used, and hence no preference as to which 
 standard brane 
 is used. It follows that any standard brane for $U\subset X$  will have the same structure with respect to other standard branes. 
For example, a standard brane and its dilations will have the same structure with respect to other standard branes.
%\end{rmk}

\begin{thm}[\cite{N, NZ}]\label{thm equiv}
Microlocalization is a quasi-equivalence
$$
\xymatrix{
\mu_X: Sh_c(X) \ar[r]^-{\sim} & F(T^*X).
}
$$
\end{thm}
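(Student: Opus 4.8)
The plan is to upgrade the functor $\mu_X$, already defined on all of $Sh_c(X)$ with $\mu_X(i_*\C_U)=L_{U*}$ on the generating standard objects, to a quasi-equivalence by establishing two things: (i) $\mu_X$ induces quasi-isomorphisms on hom-complexes, and (ii) $\mu_X$ is essentially surjective onto the triangulated envelope $F(T^*X)$. Throughout I would lean on the two inputs recalled above: the extension of Fukaya--Oh~\cite{FO} identifying the Morse moduli spaces of the functions $f_{\eta_\alpha}$ with the Fukaya moduli spaces of the standard branes $L_{X_{m=\eta},f_\eta*}$, and --- the crucial analytic tool --- the invariance of Floer calculations under non-characteristic Hamiltonian motions (isotopies during which no intersection at infinity is created or destroyed).

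For (i): since the standard objects $i_*\C_U$ generate $Sh_c(X)$ as a triangulated dg category, every object is an iterated cone of standards, and it suffices to check that $\mu_X$ is a quasi-isomorphism on $\hom_{Sh_c(X)}(i_{\alpha*}\C_{U_\alpha},i_{\beta*}\C_{U_\beta})$ compatibly with composition. But the chain of quasi-isomorphisms recalled above --- relative de Rham complex of the pair, then Morse complex $\CM(X_{m_\alpha>\eta_\alpha,m_\beta>\eta_\beta},f_{\alpha,\beta})$, then (via Fukaya--Oh and homological perturbation) the Floer complex $\hom_{F(T^*X)}(L_{U_\alpha*},L_{U_\beta*})$ with its $A_\infty$-structure --- is exactly this statement. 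A d\'evissage along exact triangles (five-lemma on the long exact sequences of hom-complexes), using that $\mu_X$ is an exact dg functor, then extends it to all objects; this already gives the quasi-embedding used elsewhere in the paper.

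For (ii): I would construct a would-be inverse on objects. Given a Lagrangian brane $L\hra T^*X$ satisfying the standing assumptions ($\ol L\hra\ol T^*X$ a $\CC$-subset, $L^\infty$ isotropic, $L$ tamable), let $\CP(L)$ be the complex of sheaves with
$$
\Gamma(B,\CP(L))=\hom_{F(T^*X)}(\mu_X(b_!\C_B),L)
$$
over small open convex $\CC$-subsets $b:B\hra X$, with restriction maps induced by inclusions $B_0\hra B_1$. Two things must be checked. First, $\CP(L)$ is constructible: fixing a Whitney $\CC$-stratification of $\ol T^*X$ adapted to $\ol L$ and to the caustic of the projection $\ol L\to X$, the $\CC$-finiteness and tameness hypotheses force each Floer complex to be finite-rank, and the non-characteristic invariance of Floer calculations --- used to slide the test brane $\mu_X(b_!\C_B)$ as the center of $B$ moves within a stratum --- shows the cohomology sheaves are locally constant along the strata. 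Second, a natural evaluation morphism $\mu_X(\CP(L))\to L$ is an isomorphism in $F(T^*X)$: a generic controlled Hamiltonian (equivalently, a generic Morse function adapted to $\pi$) presents $L$ near infinity by normalized geodesic flow and, in the interior, furnishes a handle decomposition of $L$ compatible with the projection, realizing $L$ as an iterated cone of standard branes $L_{U_\alpha*}$ indexed by the open strata; the Morse--Floer comparison of step (i) matches this filtration term by term with the analogous filtration of $\mu_X(\CP(L))$ assembled from the $i_{\alpha*}\C_{U_\alpha}$, whence the five-lemma. Since $\mu_X$ is exact and $Sh_c(X)$ is closed under cones, hitting every Lagrangian brane suffices to hit every twisted complex of representables, i.e. all of $F(T^*X)$.

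The main obstacle is (ii), and specifically the two checks just described, both of which are delicate \emph{near infinity}. The constructibility of $\CP(L)$ and the isomorphy of the evaluation map each require that every auxiliary Hamiltonian isotopy --- moving a test brane within a stratum, or implementing the handle decomposition of $L$ --- be genuinely non-characteristic, so that the Floer complexes involved do not jump; this is precisely where the $\CC$-subset hypothesis on $\ol L$, the isotropy of $L^\infty$, the tameness of $L$, and the fringed-set bookkeeping of Section~\ref{sec fukaya} are indispensable. I would organize the proof of (ii) as: (1) choose a stratification of $\ol T^*X$ making $\ol L$, $L^\infty$, and the caustic constructible and compatible with a stratification of $X$; (2) prove constructibility of $\CP(L)$ stratum by stratum via non-characteristic invariance; (3) build the $\pi$-adapted handle decomposition of $L$ and match it term by term with $\mu_X(\CP(L))$. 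The routine but unavoidable bookkeeping of gradings, orientations and pin structures, and signs --- needed to see that $\mu_X$ and $\CP$ respect the full brane data --- is folded into steps (2) and (3).
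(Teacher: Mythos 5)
This theorem is cited from \cite{N, NZ}; the paper under review does not prove it, and only recalls two ingredients after the statement — the Morse--Floer comparison for standard branes (extending Fukaya--Oh) and the non-characteristic invariance of Floer calculations (Proposition~\ref{prop brane non-char}). So the comparison has to be against \cite{NZ} (quasi-embedding) and \cite{N} (essential surjectivity), which is indeed how you split the problem.

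Your part (i) matches \cite{NZ} faithfully: Morse--Floer on the generating standards, then d\'evissage via exactness of $\mu_X$. Your part (ii) is in the right framework — define $\CP(L)$ by Yoneda pairing with $\mu_X$ of (co)standards, check constructibility by sliding test branes non-characteristically within strata — and this is the skeleton of \cite{N}. But the third step of your (ii), the verification that the evaluation map $\mu_X(\CP(L))\to L$ is an isomorphism, has a real gap as written. You assert that ``a generic controlled Hamiltonian\ldots furnishes a handle decomposition of $L$\ldots realizing $L$ as an iterated cone of standard branes $L_{U_\alpha*}$.'' That $L$ is an iterated cone of standards in $F(T^*X)$ is precisely the essential-surjectivity statement you are trying to prove, so this step begs the question. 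A Morse function on $L$ adapted to $\pi$ gives a handle decomposition of the \emph{manifold} $L$; it does not hand you a filtration of $L$ by Lagrangian subobjects in the Fukaya category, and the individual handles are not themselves graded exact Lagrangian branes, let alone standard ones. To close the argument one must instead test the evaluation map against a generating family of (co)standard branes indexed by a triangulation adapted to $L^\infty$, use Proposition~\ref{prop brane non-char} to localize each pairing near a stratum and reduce it to a computable Morse--Floer model, and then invoke Yoneda with respect to that generating family — an argument that never needs to decompose $L$ geometrically. That localization-and-Yoneda step is the hard technical core of \cite{N}, and it is where the compactness of $X$, the $\CC$-subset and tameness hypotheses, and the fringed-set bookkeeping all enter in an essential way; your sketch acknowledges this but substitutes the unproved (and essentially equivalent) handle-decomposition claim for it.
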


 One can rephrase the equivalence of the theorem to say that every brane can be expressed in terms of standard branes. 
 Thus to understand properties of branes it suffices to study collections of standard branes.
It follows from the discussion immediately preceding the theorem that all of  the standard branes for a given subset are equivalent to each other (since the arguments of ~\cite{NZ} show that they lead to the same calculations as the unambigiuous standard sheaves), and
all branes are equivalent to themselves under dilation (since the dilation of a standard brane is equally well a standard brane and thus leads to the same calculations).

The theorem admits the following refinement.
Given a  conical Lagrangian $\Lambda\subset T^*X$, let $F_\Lambda(T^*X) \subset F(T^*X)$
denote the full subcategory generated by branes $L\subset T^*X$ whose boundary
 $L^\infty=\ol L \cap T^\infty X$
lies in the boundary  $\Lambda^\infty=\ol \Lambda\cap T^\infty X$.

For any stratification $\CS=\{S_\alpha\}$ 
of $X$, let $\Lambda_\CS\subset T^*X$ denote the union of conormal bundles 
$\Lambda_\CS = \cup_\alpha T^*_{S_\alpha} X$.
By construction, the microlocalization $\mu_X$ takes the full subcategory $Sh_\CS(X)\subset Sh_c(X)$
to the full subcategory $F(T^*X)_{\Lambda_\CS}\subset F(T^*X)$. 

Conversely, given an object $L$ of $F(T^*X)$,
let $\Lambda\subset T^*X$ be a conical Lagrangian such that the boundary
  $L^\infty$
 lies in the boundary $\Lambda^\infty$ (for instance, one can minimally take $\Lambda$ to be
the cone over $L^\infty$).
Then for any object $\CF$ of $Sh_c(X)$
such that $\mu_X(\CF)\simeq L$, and
for any stratification $\CS=\{S_\alpha\}$ 
of $X$ such that $\Lambda \subset \Lambda_\CS = \cup_\alpha T^*_{S_\alpha} X$,
the object $\CF$ belongs to $Sh_{\CS}(X)$.

\medskip

One proves the theorem and the above refinement by studying non-characteristic
families of branes.
By a
one-parameter family of closed (but not necessarily compact) 
submanifolds (without boundary) in $T^*X$, we mean a closed 
 submanifold
$$
\fL \hra \R\times T^*X
$$ 
satisfying the following:

\begin{enumerate}

\item 
The restriction of the projection $p_\R: \R\times T^*X\to \R$
to the submanfold $\fL$ is nonsingular.

\item There is a real number $r>0$, such that
the restriction of the product  
$
p_\R\times|\xi| : \R\times T^*X \to \R \times [0,\infty)
$
to the subset $\{|\xi|>r\}\cap \fL$ is proper and nonsingular. % and hence a fibration of manifolds.

 \item There is a compact interval $[a,b]\hra \R$ such that
 the restriction of the projection $p_X:\R\times T^*X\to T^*X$ 
 to the submanifold $p_\R^{-1}([\R\setminus [a,b])\cap \fL$
 is locally constant.

\end{enumerate}

Note that conditions (1) and (2) will be satisfied if 
the restriction of the projection $\ol p_\R:\R\times \ol T^*X \to \R$ 
to the closure $\ol\fL\hra \ol T^*X$ is 
nonsingular as a stratified map, but the weaker condition stated is a useful generalization.
It implies in particular that the fibers $\fL_s = p_\R^{-1}(s) \cap\fL \hra T^*X$ are all diffeomorphic, but imposes no requirement
that their boundaries at infinity should all be homeomorphic as well.

\medskip

By a one-parameter family of tame Lagrangian branes in $T^*X$, we mean
a one-parameter family of closed submanifolds $\fL\hra \R\times T^*X$ in the above sense
such that the fibers $\fL_s = p_\R^{-1}(s) \cap\fL \hra T^*X$ also satisfy:

\begin{enumerate}

\item The fibers $\fL_s$ are exact tame Lagrangians with respect to the usual
symplectic structure and any almost complex structure conical near infinity.

\item The fibers $\fL_s$ are
equipped with a locally constant brane structure $(\CE_s, \tilde\alpha_s,\flat_s)$
with respect to the usual background classes. 

\end{enumerate}

Note that if we assume that $\fL_0$ is an exact Lagrangian, then $\fL_s$ being an
exact Lagrangian
is equivalent to the family $\fL$ being given by the flow $\varphi_{H_s}$
of the vector field of a time-dependent
Hamiltonian $H_s:T^*X\to \R$.
Note as well that a brane structure consists of topological data, so can be transported unambiguously
along the fibers of such a family. 

\medskip

Fix a conical Lagrangian $\Lambda\subset T^*X$,  
with boundary $\Lambda^\infty=\ol \Lambda\cap T^\infty X$.
As above,  let $F_\Lambda(T^*X)$ be the full
subcategory of $F(T^*X)$ generated by Lagrangian branes $L$ 
whose boundary $L^\infty=\ol L\cap T^\infty X$ lies in $\Lambda^\infty$. 

Suppose $\fL\hra \R\times T^*X$ is a one-parameter family of tame Lagrangian branes.
We will say that $\fL$ 
is $\Lambda$-non-characteristic if 
$$
\ol \fL_s\cap \Lambda^\infty =\emptyset,
\qquad
\mbox{ for all $s\in\R$.}
$$

\begin{prop}[\cite{N}]\label{prop brane non-char}
Suppose $\fL\hra \R\times T^*X$ is a $\Lambda$-non-characteristic
one-parameter family of tame Lagrangian branes.
For any test object $P$ of $F_\Lambda(T^*X)$, there are functorial quasi-isomorphisms among the 
Floer complexes
$$
\hom_{F(T^*X)}(P, \fL_{s}),
\qquad
\mbox{ for all $s\in\R$}.
$$
\end{prop}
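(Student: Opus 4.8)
The plan is to realize the desired isomorphisms as continuation maps and to check that the $\Lambda$-non-characteristic hypothesis supplies exactly the a priori control that makes the relevant pseudoholomorphic moduli spaces compact. It will be cleanest to produce, for parameters in a small interval, a continuation quasi-isomorphism between the representable modules $\CY_{\fL_{s_0}}$ and $\CY_{\fL_{s_1}}$ over the Fukaya category of $T^*X$, and then to apply $\hom_{F(T^*X)}(P,-)$: this yields functorial quasi-isomorphisms $\hom_{F(T^*X)}(P,\fL_{s_0})\simeq\hom_{F(T^*X)}(P,\fL_{s_1})$ for every test object $P$ (in particular for twisted complexes), and since the construction only involves branes whose boundary at infinity lies in $\Lambda^\infty$, it stays inside $F_\Lambda(T^*X)$. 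Since each $\fL_s$ is exact and we may take $\fL_0$ exact, we have $\fL_s=\varphi_{H_s}(\fL_0)$ for the flow of a time-dependent controlled Hamiltonian $H_s:T^*X\to\R$; after a compactly supported modification I would arrange that near infinity each $\fL_s$ is obtained from $\fL_0$ by normalized geodesic flow, so that the perturbation recipe of Section~\ref{sect Fukaya category} applies to each relevant pair of branes and $\CY_{\fL_s}$ is defined.

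The first substantive step is to choose the perturbation data uniformly in $s$. Since $P$ and the test branes $L$ generating $F_\Lambda(T^*X)$ have boundary at infinity in $\Lambda^\infty$, while $\ol\fL_s\cap\Lambda^\infty=\emptyset$ for all $s$, one has $(\ol\fL_s\cap T^\infty X)\cap L^\infty=\emptyset$ for every $s$; by property (3) in the definition of a one-parameter family this configuration at infinity is constant outside a compact interval $[a,b]$, so the disjointness at infinity is uniform in $s$. Hence there should exist a radius $r>0$ and fringed-set perturbation data --- chosen locally over $[a,b]$, patched, and extended constantly to the ends --- such that for all $s$ the perturbed branes meet transversally and only inside $\{|\xi|<r\}$.

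With this data fixed, the second step is the standard continuation argument, transcribed to the tame noncompact setting of~\cite{NZ, N}. For $s_0<s_1$ in a small subinterval I would interpolate monotonically between the perturbed $\fL_{s_0}$ and $\fL_{s_1}$ and count rigid pseudoholomorphic polygons (with one moving boundary label) with boundary on the test branes and on the interpolating family, asymptotic to intersection points; the leading term is a count of strips $u:(\R\times[0,1],\R\times\{0,1\})\to T^*X$, and these assemble into a module morphism $\CY_{\fL_{s_0}}\to\CY_{\fL_{s_1}}$. Exactness of all branes involved gives ${\rm Area}(u)=\int_{\del(\R\times[0,1])}u^*\theta$ bounded by the action difference of the asymptotic ends, uniformly over the compact interval; since the ends lie in $\{|\xi|<r\}$, every such polygon meets a fixed compact set, and tameness of $(T^*X,\omega,J_{con},g_{con})$ in the sense of Sikorav~\cite{Sikorav}, exactly as used in Section~\ref{sect Fukaya category}, then yields a diameter bound and compactness of the moduli. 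The usual formal facts --- a constant family gives a module morphism homotopic to the identity, and continuation morphisms compose up to homotopy --- make this morphism a quasi-isomorphism with compatible induced isomorphisms. To pass from a subinterval to all of $\R$: by property (3) the modules $\CY_{\fL_s}$ are literally unchanged for $s$ in $(-\infty,a]$ or $[b,\infty)$, and $[a,b]$ is covered by finitely many small subintervals, so composing finitely many continuation quasi-isomorphisms gives the statement for every $s\in\R$.

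The hard part will be the uniform compactness near infinity: turning the purely contact-geometric input ``$(\ol\fL_s\cap T^\infty X)\cap L^\infty=\emptyset$ for all $s$'' into a genuine a priori bound confining the continuation polygons to a fixed compact set, simultaneously for a whole family of perturbation data (and conical almost complex structures). This is where one must combine the ``no critical event at infinity'' picture --- controlled Hamiltonians act as normalized geodesic flow near infinity, so intersections at infinity can only persist or disappear, never spontaneously appear --- with Sikorav-type monotonicity and diameter estimates, and where the fringed-set data must be arranged to depend controllably on $s$. Everything else is a routine adaptation of the closed-string continuation formalism to this setting.
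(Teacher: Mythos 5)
The paper itself offers no proof of this proposition; it is imported verbatim from~\cite{N}, with only the remark that ``the proof\ldots is very general and does not use that $X$ is compact in any serious way.'' So there is nothing internal to compare against, and I am evaluating your proposal against the argument in~\cite{N}.

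Your high-level plan --- interpret the isomorphisms as continuation maps between representable modules, apply $\hom(P,-)$, and supply compactness via exactness plus Sikorav tameness --- is the correct framework, and you are right that the entire content of the proposition is the uniform compactness estimate for the continuation moduli spaces. But you then explicitly defer exactly that step (``The hard part will be the uniform compactness near infinity\ldots''), so as written the proposal is a correct outline with its central estimate missing rather than a proof. The argument that is not yet there is precisely the one that converts ``$\ol\fL_s\cap\Lambda^\infty=\emptyset$ for all $s$'' into a statement that, after the controlled Hamiltonian perturbations, all intersection points of the (moving) brane with the test branes stay in a single ball $\{|\xi|<r\}$ uniformly over $s$, so that the action differences --- and hence the areas of the continuation polygons --- are bounded uniformly and every polygon is forced to meet a fixed compact set. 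One must check in particular that no intersection point escapes through, or arrives from, the boundary at infinity as $s$ sweeps $[a,b]$; this is where the non-characteristic hypothesis does its work, and where the fringed-set perturbation data has to be chosen to vary controllably with $s$ rather than pointwise. Two smaller points worth tightening: (i) for the continuation strips the area is not literally the action difference of the asymptotic ends --- there is an extra curvature/monotonicity term from the moving boundary condition, and one should arrange a monotone homotopy so this term has a sign; and (ii) the passage from a statement about a small subinterval to all of $\R$ needs the observation that the family is locally given by a compactly-supported-at-finite-distance Hamiltonian isotopy, which you assert but do not verify from conditions (1)--(3) of the definition. None of this is a wrong turn --- it is the route taken in~\cite{N} --- but the proposal stops at the point where the real work begins.
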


The proof of the proposition is very general and does not use that $X$ is compact
in any serious way. For example, it holds when $X$ is complete, or in fact for tame Lagrangian branes
in more general exact symplectic targets. We will use it in later sections for the contangent bundle
of a vector space.

%%%%

\subsection{Floer invariance}\label{sect inv}
In the definition of the Fukaya category of $T^* X$ recalled in Section~\ref{sect Fukaya category}, as well as in the 
microlocalization quasi-equivalence recalled in Section~\ref{sect micro}, we have appealed to results of ~\cite{NZ, N} on the invariance of Floer calculations under suitable motions of noncompact branes.
To make the current paper as self-contained as possible, we include here a brief section reviewing the (somewhat ad hoc) arguments which establish the following basic example of this invariance. One could also consult Oh's paper~\cite{Oh} which contains a detailed analysis of the canonical structures provided by pseudoholomorphic disk theory.

\begin{prop}\label{prop inv}
Suppose $\fL_s$ is a family of objects of $F(T^*X)$. Suppose $L'$ is a fixed test object which is disjoint from $\fL_s$ near infinity for all $s$. Suppose $\fL_s$ is transverse to $L'$ except for finitely many points.

Then for any $a,b$ with $\fL_a$ and $\fL_b$ transverse to $L'$, the Floer chain complexes
$CF(\fL_{a}, L')$ and $CF(\fL_{b}, L')$ are quasi-isomorphic.

\end{prop}

Before proving the proposition in full, it is convenient to first prove the following special case.

\begin{lem}
Suppose $\fL_s$ is a family of objects of $F(T^*X)$. Suppose $L'$ is a fixed test object which is disjoint from $\fL_s$ near infinity for all $s$. 

Fix $s_0$ and assume $\fL_{s_0}$ is transverse to $L'$. Then there is an $\epsilon >0$ so that for all $s_1 \in (s_0 - \epsilon, s_0+\epsilon)$, the Floer chain complexes
$CF(\fL_{s_0}, L')$ and $CF(\fL_{s_1}, L')$ are quasi-isomorphic.

\end{lem}

\begin{proof}
By our assumptions on the tame behavior (in the sense of ~\cite{Sikorav}) of $\fL_{s_0}$ and $L'$ near infinity, the moduli spaces giving the differential of $CF(\fL_{s_0}, L')$ are compact. This follows from the {a priori} $C^0$-bound: there is some $r_0\gg 0$, such that no disk in the moduli space leaves the region $|\xi| < r_0$, where $(x, \xi)$ are local coordinates on $T^*X$, and $|\xi|$ is the Riemannian metric.

Choose some $r_1 > r_0$. Then for very small $\epsilon> 0$ and any $s_1 \in (s_0 - \epsilon, s_0+\epsilon)$, we may decompose the motion $\fL_{s_0} \rightsquigarrow \fL_{s_1}$ into two parts: first, a motion $\fL_{s_0} \rightsquigarrow L$ {\em supported in the region $|\xi |> r_0$}; and then second, a {\em compactly supported} motion $L \rightsquigarrow \fL_{s_1}$.  We must show that each of the above two motions leads to a quasi-isomorphism.

First, for the motion $\fL_{s_0} \rightsquigarrow L$, since we have not changed $\fL_{s_0}$ or $L'$ in the region $|\xi| < r_0$, the same {a priori} $C^0$-bounds  of \cite{Sikorav} hold (they only depend on the Lagrangians in the region $|\xi| < r_0$), and the pseudoholomorphic strips for the pair $(\fL_{s_0}, L')$ and for the pair $(L, L')$ are in fact {exactly the same} (we could perversely attach ``wild" non-intersecting ends to either and it would not make a difference.) Thus we can take the ``continuation map" to be the identity.

(One should probably not use the term ``continuation map" for such a construction. 
Rather, it is an example of the more general setup of parameterized
moduli spaces.
In the above setting, one can obtain a uniform $C^0$-bound over the family, so the parameterized
moduli space is compact, and hence one can apply standard cobordism
arguments to prove the matrix coefficients at the initial and final
time are the same. We thank an anonymous referee for this perspective on the argument.)

Second, the motion $L \rightsquigarrow \fL_{s_1}$ is compactly supported, so standard PDE techniques provide a continuation map.
\end{proof}

\begin{proof}[Proof of Proposition~\ref{prop inv}]
By the previous lemma, it suffices to show that for any $s_0$ with $\fL_{s_0}$ not (necessarily) transverse to $L'$, there is a small $\epsilon>0$ such that the Floer chain complexes
$CF(\fL_{s_0 -\epsilon}, L')$ and $CF(\fL_{s_0+\epsilon}, L')$ are quasi-isomorphic.

To see this, let $H_s(x, \xi)$ be a (time-dependent) Hamiltonian giving the motion $\fL_s$. Choose a bump function $b(|\xi|)$ which is $0$ near infinity and $1$ on a compact set containing all of the (possibly non-transverse) intersection points $\fL_{s_0} \cap L'$. 

The product Hamiltonian $\tilde H(x, \xi) = b(|\xi|) H_s(x, \xi)$ gives a family $\tilde \fL_s$ through the base object $\fL_{s_0}$ satisfying: (1) $\tilde \fL_s$ is transverse to $L'$ whenever $|s-s_0| $ is small and nonzero, and (2) $\tilde \fL_s$ is equal to $\fL_{s_0}$ near infinity.
Therefore since the motion of $\tilde \fL_s$ is compactly supported, standard PDE techniques provide a continuation map giving a quasi-isomorphism between 
$CF(\tilde \fL_{s_0 -\epsilon}, L')$ and $CF(\tilde \fL_{s_0 + \epsilon}, L')$, for small enough $\epsilon >0$.

Finally, returning to the bump function $b(|\xi|)$, one can construct motions $\fL_{s_0 - \epsilon} \rightsquigarrow \tilde \fL_{s_0 - \epsilon}$ and 
 $\tilde \fL_{s_0 +\epsilon} \rightsquigarrow  \fL_{s_0 + \epsilon}$ which are supported near infinity and thus in particular always  transverse to $L'$. Thus we may apply the previous lemma to  obtain quasi-isomorphisms between 
$CF(\fL_{s_0 -\epsilon}, L')$ and $CF( \tilde \fL_{s_0 - \epsilon}, L')$, 
and similarly, between $CF(\tilde \fL_{s_0 +\epsilon}, L')$ and $CF( \fL_{s_0 + \epsilon}, L')$.
Putting together the above, we obtain a quasi-isomorphism between 
$CF(\fL_{s_0 -\epsilon}, L')$ and $CF( \fL_{s_0 + \epsilon}, L')$.
\end{proof}

\begin{rmk}
The above proposition (which is a condensed form of arguments of ~\cite{N, NZ}) is closely related to Question 1.3 of Oh's paper~\cite{Oh} which asks whether a homology-level continuation map constructed by a careful limiting argument with PDE techniques is induced by a chain-level morphism. While we have not investigated this, it is not hard to believe that the quasi-isomorphism of the above proposition provides the desired lift.
\end{rmk}

%%%%%%%%%%%%%%%%%%%%%%%%%%%%%%%%%%%%%%%%%%%%%%%%%
%%%%%%%%%%%%%%%%%%%%%%%%%%%%%%%%%%%%%%%%%%%%%%%%%
%%%%%%%%%%%%%%%%%%%%%%%%%%%%%%%%%%%%%%%%%%%%%%%%%
%%%%%%%%%%%%%%%%%%%%%%%%%%%%%%%%%%%%%%%

%%%%%%%%%%%%%%%%%%%%%%%%%%%%%%%%%%%%%%%%%%%%%
%%%%%%%%%%%%%%%%%%%%%%%%%%%%%%%%%%%%%%%%%%%%%
%%%%%%%%%%%%%%%%%%%%%%%%%%%%%%%%%%%%%%%%%%%%%

\section{Fourier transform for branes}\label{sec fourier for branes}

In this section, we study the symplectic topology
of the cotangent bundle of a real finite-dimensional vector space $V$.
Our aim is to describe a Fukaya theory of branes in $T^* V \simeq V\times V^*$ 
that treats the horizontal and vertical directions as symmetrically as possible.
Wherever possible, we will appeal to arguments of the preceding section
and restrict the discussion here to the new aspects which arise.

%%%%%%%%%%%%%%%%%%%%%%%%%%%%%%%%%%%%%%%%%%%%%

\subsection{Preliminaries}\label{sect main prels}
Fix a real finite dimensional vector space $V$.

We will write $V_1$ in place of $V$,
and $V_2$ for its dual $V^*$. 
Let $\R^n_x$ denote standard Euclidean space
with coordinate $x=(x_i)$, and let $\R^n_\xi$ denote the dual Euclidean space with coordinate 
$\xi=(\xi_i)$,
 so that $\langle x, \xi\rangle = \sum_{i=1}^n x_i \xi_i$.
By choosing an isomorphism $V_1\simeq \R^n_x$,
we obtain a dual isomorphism $V_2\simeq \R^n_\xi$. 
For concreteness,
we will often assume such identifications have been fixed
(though our constructions will not depend on the specific identifications).

Let $\omega_1$, $\omega_2$ denote the respective canonical exact symplectic forms
on $T^*V_1$, $T^*V_2$. 
Under the canonical identifications
$$
T^*V_1\simeq V_1\times V_2 \simeq T^*V_2,
$$ 
the canonical exact symplectic forms are related by
$\omega_1= -\omega_2,
$
since in local coordinates, we have
$$
\omega_1=\sum_{i=1}^n d\xi_idx_i 
\qquad
\omega_2 = \sum_{i=1}^n dx_i d\xi_i.
$$
In what follows, unless otherwise stated, we will break symemtry and
work with 
the symplectic structure $\omega_1$. Thus to identify $V_1\times V_2$ and $T^*V_2$
as symplectic manifolds, we will compose the above canonical identification
with the negation map on the first factor: $x\mapsto -x$, $\xi\mapsto \xi$.
When it is not clear from context, we will write 
$$
\iota:T^*V_2\risom V_1\times V_2
$$
for the symplectic identification.

Given a positive definite quadratic form on $V_1$, we obtain an identification $V_1\simeq V_2$.
For vectors $v_1\in V_1$, $v_2\in V_2$,  we write $|v_1|$, $|v_2|$ for the 
respective lengths of $v_1$, $v_2$. 

%%%%%%%%%%%%%%%%%%%%%%%%%%%%%%%%%%%%%%%%%%%%%

\subsubsection{Symmetric compactification}

To control noncompact Lagrangians in $V_1\times V_2$,
we will work with a symmetric product compactification.

Given a vector space $V$, consider the spherical compactification
$$
\ol V = (V \times \R_{\geq 0} \setminus \{(0,0)\}) / \R^+
$$
where
$\R_{+}$ acts 
by dilations on both factors.
The canonical inclusion $V\hookrightarrow \ol V$ sends
a vector $v$ to the class of $[v,1]$.
The boundary sphere at infinity 
$V^\infty =\ol V\setminus V
$ 
consists of classes of the form $[v, 0]$ with $v$ a non-zero vector.

\medskip

Now let $\ol V_1$, $\ol V_1$ be the spherical compactifications of $V_1$, $V_2$ with spheres at infinity 
$V_1^\infty$, $V^\infty_2$.
We will work with the symmetric product compactification $\ol V_1\times \ol V_2$.
Its boundary at infinity is the disjoint union of a codimension one boundary
$$
B= (V_1 \times V_2^\infty) \coprod (V_1^\infty\times V_2),
$$
along with a codimension two corner
$$
C=V_1^\infty \times V_2^\infty.
$$

%%%%%%%%%%%%%%%%%%%%%%%%%%%%%%%%%%%%%%%%%%%%%

\subsubsection{Symmetric almost complex structure}
To control holomorphic disks in $V_1\times V_2$, 
we will work with a symmetric almost complex structure.

Fix a positive definite quadratic form on $V_1$, and let $j_0:V_1\risom V_2$ be the corresponding
identification. 

Fix $0<r_1< r_2$, and a smooth increasing function $b:\R\to \R$ satisfying $b(r) = 0$, for $r<r_1$,
and $b(r) = 1$,  for $r_2<r$.
Consider the functions
$$
w(v_1, v_2)=\frac{1 + |v_2|}{1+|v_1|}
\qquad
\rho(v_1, v_2) = |v_1|^2 + |v_2|^2
$$
and define the $\omega_1$-compatible almost complex structure
$$
J_{sym}=
\left(
\begin{matrix}
0 & w^{-b(\rho)} j_0^{-1} \\
-w^{b(\rho)} j_0 & 0
\end{matrix}
\right)
\in\on{End}(T(V_1 \oplus V_2)).
$$
We refer to $J_{sym}$ as a symmetric (asymptotically) conical almost complex structure.
The corresponding metric $g_{sym}(v,v) = \omega_1(v, J_{sym} v)$ is complete and tame.

%%%%%%%%%%%%%%%%%%%%%%%%%%%%%%%%%%%%%%%%%%%%%%%
%%%%%%%%%%%%%%%%%%%%%%%%%%%%%%%%%%%%%%%%%%%%%

%%%%%%%%%%%%%%%%%%%%%%%%%%%%%%%%%%%%%%%%%%

\subsection{From branes to sheaves}
In this section, we explain how to associate 
constructible sheaves to branes in $V_1\times V_2$.

\subsubsection{Branes in $V_1\times V_2$}
To define Lagrangian branes in $V_1\times V_2$,
we must first fix background structures.
Of course, the background class for relative pin structures
is trivial since it lies in $H^2(V_1\times V_2, \Z/2\Z)\simeq 0$.
Thus the only background structures of note are the bicanonical bundle and its trivialization. 
We will break symmetry and work with the bicanonical bundle $\kappa_1^{\otimes 2}$
and trivialization $\eta_1^2$ coming from
the canonical identification $T^*V_1\simeq V_1\times V_2$.

\medskip

By a Lagrangian brane $L\hra V_1\times V_2$, we mean a four-tuple
$
(L, \CE, \tilde\alpha,\flat)
$
consisting of an exact (not necessarily compact) closed Lagrangian submanifold 
${L} \hra V_1\times V_2$
equipped with a brane structure:
this includes a flat vector bundle $\CE\to L$,
along with a grading $\tilde\alpha:L\to \R$ 
(with respect to the bicanonical trivialization $\eta_1^2$ of the bicanonical
bundle $\kappa_1^{\otimes 2}$) and a pin structure $\flat$.

Furthermore, we place two assumptions on the
Lagrangian $L$.
Recall the symmetric product compactification $\ol V_1\times \ol V_2$,
and the symmetric conical almost complex structure $J_{sym}$
and corresponding metric $g_{sym}$.
First, 
we assume that
the closure $\ol L \hra  \ol V_1\times \ol V_2$ is a $\CC$-subset.
Second, 
we assume the existence of a perturbation $\psi$ that moves the initial
Lagrangian $L$ to a nearby Lagrangian tame (in the sense of~\cite{Sikorav})
with respect to the symmetric conical metric $g_{sym}$.

\medskip

Let us take a moment to comment on the asymmetry of the above definition.
Recall that to identify $T^*V_2$  and $V_1\times V_2$ as symplectic manifolds, 
we compose the canonical
identification
with the negation map on the first factor: $x\mapsto -x$, $\xi\mapsto \xi$.
Thus the bicanonical bundle $\kappa_2^{\otimes 2}$
coming from  the resulting identification $\iota:T^*V_2\risom V_1\times V_2$
is canonically identified with $\kappa_1^{\otimes 2}$.
The bicanonical trivialization $\eta_2^2$ 
arising via $\iota$
satisfies $\eta_2^2=-\eta_1^2$.
We will identify the two  bicanonical trivializations via  the path 
$\eta_1^2\rightsquigarrow \eta_2^2$ induced by the positively-oriented path
 $1 \rightsquigarrow-1$ inside of $\C^\times$.

\begin{ex}
Suppose $V_1=  \R_x$, $V_2=\R_\xi$, and let $L\hra \R_x\times \R_\xi$ be the Lagrangian 
$L = \{x\xi= 1\}$. Then the canonical grading of $L$ as a graph in $T^*\R_x$
coincides with its canonical grading as a graph in $T^*\R_\xi$. Note that if we write
$\eta$ for the dual coordinate of $\xi$, then as a graph in $T^*\R_\xi \simeq \R_\xi\times\R_\eta$,
we have $L=\{\xi\eta = -1\}$.
\end{ex}

%%%%%%%%

\subsubsection{Fukaya $A_\infty$-structures}

Suppose we have a collection of Lagrangian branes in $V_1\times V_2$
that are pairwise transverse and whose boundaries at infinity are disjoint.

We define a  Fukaya pre-$A_\infty$-category structure (or partially define $A_\infty$-category structure)
on the collection as follows.
The graded vector space of morphisms between distinct branes is defined to be
$$
{hom}_{}(L_0,L_1) = \bigoplus_{p\in L_0\cap L_1}
{\mathcal Hom}(\mathcal E_0 \vert_{p},\mathcal E_1\vert_{p})[-\deg(p)].
$$
where the integer
$\deg(p)$ denotes the Maslov grading of the linear
Lagrangian subspaces at the
intersection. 

 Signed counts of
pseudoholomorphic polygons provide the differential and higher composition maps
of the pre-$A_\infty$-structure. 
To ensure that the relevant moduli spaces are compact,
we appeal to the same arguments used for cotangent bundles
in~Section~\ref{sect Fukaya category}.
First, the target $V_1\times V_2$ 
with the symplectic form $\omega_1$,
almost complex structure $J_{sym}$, and corresponding metric $g_{sym}$ is tame.
Next, let $L\hra V_1\times V_2$ be the union of any finite number of branes
from the collection.
By assumption, the intersection
of $L$ with the region $\rho(x, \xi) = |x|^2 + |\xi|^2>r$, for large $r>0$, is a tame submanifold.
Since our branes are exact,
for a given $A_\infty$-structure constant, the relevant
pseudoholomorphic maps
$$
u:(D,\del D) \to (V_1\times V_2, L),
$$
satisfy an a priori
area bound.
Thus we have a diameter bound 
on their images $u(D)$, and hence the moduli space of all such maps is compact.

Finally, for distinct branes, the composition maps
$$
m^d:
{hom}_{F(T^*X)}(L_0,L_1)\otimes\dots\otimes{hom}_{F(T^*X)}(L_{d-1},L_d)\rightarrow
{hom}_{F(T^*X)}(L_0,L_d)[2-d]
$$ 
is defined as follows.  Consider elements 
$p_i\in {hom}(L_i,L_{i+1}),$
for $i=0,\ldots,d-1$,
and $p_d\in hom(L_0,L_d)$.
Then the coefficient of $p_d$ in 
$m^d(p_0,\dots,p_{d-1})$
is defined to be the signed sum over pseudoholomorphic maps from a disk
with $d+1$ counterclockwise cyclically ordered
marked points mapping to the $p_i$ and corresponding
boundary arcs mapping to the perturbations of $L_{i+1}.$  Each map contributes
according to the holonomy of its boundary, where adjacent perturbed
components $L_i$ and $L_{i+1}$ are glued with $p_i.$

\medskip

Our key technical tool for understanding calculations in $F(V_1\times V_2)_{pre}$
is their invariance under certain motions of branes. The following is a direct generalization
of Proposition~\ref{prop brane non-char} and the discussion preceding it.

By a one-parameter family of closed (but not necessarily compact) 
submanifolds (without boundary) in $V_1\times V_2$, we mean a closed 
 submanifold
$$
\fL \hra \R\times V_1\times V_2
$$ 
satisfying the following:

\begin{enumerate}

\item 
The restriction of the projection $p_\R: \R\times V_1\times V_2 \to \R$
to the submanfold $\fL$ is nonsingular.

\item There is a real number $r>0$, such that
the restriction of the product
$
p_\R\times \rho: V_1\times V_2 \to \R \times [0,\infty)
$
to the subset $\{\rho>r\}\cap \fL$ is proper and nonsingular.

 \item There is a compact interval $[a,b]\hra \R$ such that
 the restriction of the projection $p_X:\R\times T^*X\to T^*X$ 
 to the submanifold $p_\R^{-1}([\R\setminus [a,b])\cap \fL$
 is locally constant.

\end{enumerate}

By a one-parameter family of tame Lagrangian branes in $V_1\times V_2$, we mean
a one-parameter family of closed submanifolds $\fL\hra \R\times V_1\times V_2$ in the above sense
such that the fibers $\fL_s = p_\R^{-1}(s) \cap\fL \hra V_1\times V_2$ 
are exact tame Lagrangians equipped with a locally constant brane structure $(\CE_s, \tilde\alpha_s,\flat_s)$.

Fix a biconical Lagrangian $\Lambda\subset V_1\times V_2$,  
with boundary $\Lambda^\infty$.
Let $F_\Lambda(V_1\times V_2)_{pre}$ be the
pre-$A_\infty$-category of Lagrangian branes $L$ 
whose boundary $L^\infty$ lies in $\Lambda^\infty$. 
Suppose $\fL\hra \R\times V_1\times V_2$ is a one-parameter family of tame Lagrangian branes.
We will say that $\fL$ 
is $\Lambda$-non-characteristic if 
$$
\ol \fL_s\cap \Lambda^\infty =\emptyset,
\qquad
\mbox{ for all $s\in\R$.}
$$

As with Proposition~\ref{prop brane non-char},
one can repeat the proof from~\cite{N} 
to establish the following assertion.
In fact, the same argument gives a further generalization for tame Lagrangian branes
in other exact symplectic targets. 

\begin{prop}\label{prop brane non-char in general}
Suppose $\fL\hra \R\times V_1\times V_2$ is a $\Lambda$-non-characteristic
one-parameter family of tame Lagrangian branes.
For any test object $P$ of $F_\Lambda(V_1\times V_2)_{pre}$, there are functorial quasi-isomorphisms among the 
Floer complexes
$$
\hom_{F(V_1\times V_2)_{pre}}(P, \fL_{s}),
\qquad
\mbox{ for all $s\in\R$}.
$$
\end{prop}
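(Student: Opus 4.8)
The plan is to reduce Proposition~\ref{prop brane non-char in general} directly to Proposition~\ref{prop brane non-char} by exploiting the fact that the symplectic target $V_1\times V_2$ together with $\omega_1$, $J_{sym}$, and $g_{sym}$ is tame, and that $V_1\times V_2$ is canonically $T^*V_1$ as a symplectic manifold. The entire content of Proposition~\ref{prop brane non-char} is that Floer calculations of $\hom_{F(T^*X)}(P, \fL_s)$ are carried along by a non-characteristic family via a continuation-map argument: one builds moduli spaces of pseudoholomorphic strips interpolating between $\fL_{s_0}$ and $\fL_{s_1}$, and the non-characteristic hypothesis $\ol\fL_s\cap\Lambda^\infty=\emptyset$ combined with tameness gives the a priori area and diameter bounds needed for compactness of those moduli spaces near infinity. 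As the excerpt itself notes, the proof of Proposition~\ref{prop brane non-char} ``is very general and does not use that $X$ is compact in any serious way,'' and in fact applies to tame Lagrangian branes in general exact symplectic targets. So the first step is simply to observe that $V_1\times V_2 \simeq T^*V_1$ is precisely such a target, with the single caveat that $V_1$ is noncompact.

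Second, I would address that caveat. The Fukaya-category setup in Section~\ref{sec fukaya} used compactness of $X$ in two essential ways: to ensure the cospherical compactification $\ol T^*X$ behaves well, and to guarantee finiteness of the stratifications (hence boundedness) appearing in microlocalization. Here we instead use the symmetric product compactification $\ol V_1\times\ol V_2$ and the symmetric conical structures $(J_{sym},g_{sym})$, which by construction are complete and tame; the fringed-set perturbation machinery for arranging transverse intersections inside a bounded region goes through verbatim, since it only uses the normalized geodesic/Liouville flow near infinity and tameness. The $\CC$-subanalyticity hypotheses on $\ol L$ play exactly the role that compactness of $X$ played before: they guarantee that $L^\infty$ is isotropic, that the length/radial function $\rho$ has no critical points on $L$ near infinity, and that the relevant moduli spaces meet only finitely many strata. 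So the whole apparatus of Section~\ref{sec fukaya}, including the exactness-based area bound $\mathrm{Area}(u(D))=\int_{u(\del D)}\theta$, transplants without change.

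Third, with the target and category machinery in place, I would run the continuation argument. Given the $\Lambda$-non-characteristic family $\fL$, choose a test object $P$ in $F_\Lambda(V_1\times V_2)_{pre}$, so $\ol P^\infty\subset\Lambda^\infty$ while $\ol\fL_s^\infty\cap\Lambda^\infty=\emptyset$ for all $s$; hence $\ol P^\infty$ and $\ol\fL_s^\infty$ are disjoint for every $s$, uniformly in a neighborhood of each $s$. Partition any compact parameter interval so that on each subinterval the relevant intersections stay in a fixed bounded region (using condition~(2) on $\fL$, properness and nonsingularity of $p_\R\times\rho$ away from the radius-$r$ region), and build the parametrized moduli space of $J_{sym}$-holomorphic strips with one boundary on $P$ and the other sweeping through $\fL_s$. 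Tameness plus the exactness area bound plus the disjointness at infinity give Gromov compactness of this moduli space with no bubbling or escape to infinity, so counting its boundary yields a chain homotopy equivalence between the Floer complexes at the two endpoints; condition~(3) (the family is locally constant outside $[a,b]$) makes this a finite argument covering all of $\R$. I expect the only real subtlety, and hence the step to state carefully rather than wave at, is verifying that the non-characteristic condition genuinely forbids holomorphic strips from drifting to the codimension-one boundary $B=(V_1\times V_2^\infty)\coprod(V_1^\infty\times V_2)$ and the corner $C$; this is where one uses that both $\ol P$ and $\ol\fL_s$ are $\CC$-subsets with isotropic boundaries at infinity together with the conical behavior of $J_{sym}$ near $B$ and $C$, exactly paralleling the treatment of $T^\infty X$ in~\cite{N}, and it is the one place where the two-stratum structure of the boundary (rather than a single cosphere bundle) must be checked — but it causes no new difficulty because $J_{sym}$ is dilation-invariant in each factor separately near infinity.
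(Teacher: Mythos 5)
Your proposal matches the paper's approach: the paper itself proves this proposition simply by observing that the argument of Proposition~\ref{prop brane non-char} from~\cite{N} ``is very general and does not use that $X$ is compact in any serious way,'' and applies to tame Lagrangian branes in more general exact symplectic targets such as $V_1\times V_2$ with the symmetric conical structures. Your elaboration — the continuation-map mechanism, tameness and exactness supplying the a priori area and diameter bounds, and the one genuinely new point about controlling strips near the two-stratum boundary $B\coprod C$ of the symmetric compactification rather than a single cosphere bundle — correctly fills in the details the paper leaves implicit.
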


%%%%%%%%

\subsubsection{Functionals on sheaves}

Let us single out a special classe of branes in $V_1\times V_2$. 
Consider the situation
from the perspective of $V_1$
so that we have $T^* V_1\simeq V_1\times V_2$. 
We say that a brane $L$ is compact along the first factor if its
 projection to $V_1$ is compact, or equivalently, the closure $\ol L$
lies in $\ol T^* V_1\simeq V_1\times \ol V_2$.

Consider the collection of branes in $V_1\times V_2$
that are compact along the first factor. Then we can regard them as branes in $T^* V_1$,
and accordingly define an honest Fukaya $A_\infty$-category structure on them
repeating  
our constructions for cotangent bundles
in~Section~\ref{sect Fukaya category}.
In particular, we can find a fringed set parametrizing controlled Hamiltonian perturbations
that move the branes so that they do not intersect at infinity.
We write $F(T^*V_1)_\kappa$ for the triangulated envelope of the Fukaya category
of such branes.

Consider the full subcategory  $Sh_c(V_1)_\kappa \subset Sh_c(V_1)$ 
of compactly supported objects.
Since $V_1$ is complete (though noncompact), we can repeat
the construction of microlocalization to obtain a quasi-embedding
$$
\mu_{V_1}: Sh_c(V_1)_\kappa \hra F(V_1)_\kappa.
$$

\medskip

Now fix a brane $L\hra V_1\times V_2$, without any assumption on whether
it is compact in either direction. 
Let us measure the structure of $L$
using the quasi-embedding $\mu_{V_1}$.
Namely, we can consider the right module
$$
\tilde \pi_{V_1}(L) : Sh_c(V_1)^{op}_\kappa \to \Ch
\qquad
\tilde \pi_{V_1}(L)(\CF) = \hom_{F(V_1\times V_2)_{pre}}(\mu_{V_1}(\CF), L)
$$
By definition, if the boundary of $\mu_{V_1}(\CF)$ intersects the boundary of $L$,
then we simply perturb the former according to our usual conventions
for cotangent bundles. Thus we can always unambiguously make the 
necessary calculations to define an honest module.

By Proposition~\ref{prop brane non-char} and the results on
quasi-representability from~\cite{N}, the module $\tilde \pi_{V_1}(L)$ is quasi-repesented
by an object of $Sh_c(V_1)$ which we denote by $\pi_{V_1}(L)$.
Given a relatively compact open submanifold $i:U\hra V_1$, we have quasi-isomorphisms
of complexes
$$
 \pi_{V_1}(L)(U) \simeq \hom_{Sh_c(V_1)}(i_!\C_U, \CF)
\simeq \hom_{F(V_1\times V_2)_{pre}}(L_{U!}\otimes or_{V_1}[-\dim V_1], L).
$$
Fix a conical (with respect to the second factor) Lagrangian $\Lambda\hra T^*V_1$ such that the part of the boundary
$L^\infty$ that lies in $V_1\times V^\infty_2$ in fact lies in the boundary $\Lambda^\infty$.
Then for any stratification $\CS=\{S_\alpha\}$ 
of $V_1$ such that $\Lambda \subset \Lambda_\CS = \cup_\alpha T^*_{S_\alpha} V_1$,
the object $ \pi_{V_1}(L)$ lies in $Sh_{\CS}(V_1)$.

\medskip

Similarly, we say that a brane $L$ is compact along the second factor if its
 projection to $V_2$ is compact, or equivalently, the closure $\ol L$
lies in $\ol T^* V_2\simeq \ol V_1\times V_2$.
We can define a Fukaya $A_\infty$-structure on the collection 
of branes in $V_1\times V_2$
that are compact along the second factor. 
We write $F(T^*V_2)_\kappa$ for the triangulated envelope of the Fukaya category
of such branes. 
Consider the full subcategory  $Sh_c(V_2)_\kappa \subset Sh_c(V_2)$ 
of compactly supported objects. In parallel with the above discussion,
we have 
a quasi-embedding
$$
\mu_{V_2}: Sh_c(V_2)_\kappa \hra F(T^*V_2)_\kappa
$$
that leads to a right module
$$
\tilde \pi_{V_2}(L) : Sh_c(V_2)^{op}_\kappa \to \Ch
\qquad
\tilde \pi_{V_2}(L)(\CF) = \hom_{F(V_1\times V_2)_{pre}}(\mu_{V_2}(\CF), L)
$$
that is quasi-represented by an object of $Sh_c(V_2)$ which we denote by $\pi_{V_2}(L)$.
Fix a conical (with respect to the first factor) Lagrangian $\Lambda\hra T^*V_2$ such that the part of the boundary
$L^\infty$ that lies in $V^\infty_1\times V_2$ in fact lies in the boundary $\Lambda^\infty$.
Then for any stratification $\CS=\{S_\alpha\}$ 
of $V_2$ such that $\Lambda \subset \Lambda_\CS = \cup_\alpha T^*_{S_\alpha} V_2$,
the object $ \pi_{V_2}(L)$ lies in $Sh_{\CS}(V_2)$.

\subsubsection{Dilation invariance}

Recall that for $X$ a compact manifold, the natural dilation $\R^+$-action on $T^*X$ 
is not a symplectomorphism,
but as explained immediately after Theorem~\ref{thm equiv}, any brane $L\hra T^*X$  is quasi-isomorphic to its dilations.

The target $V_1\times V_2$ admits
two commuting $\R^+$-dilation actions which we denote by 
$\alpha_1^t$, $\alpha_2^t$,
for any $t\in \R^+$.
In general, given a brane $L\hra V_1\times V_2$, its dilations along either factor
will not define quasi-isomorphic sheaves on
the corresponding factor.

\begin{ex}
Suppose $V_1=  \R_x$, $V_2=\R_\xi$, and let $L\hra \R_x\times \R_\xi$ be a brane
with underlying Lagrangian $L = \{\xi= 1\}$.
Then $\pi_{V_1}(L)$ is quasi-isomorphic to its dilations,
but  $\pi_{V_2}(L)$ is not.
\end{ex}

\begin{prop}\label{prop dilate}
There are quasi-isomorphisms
$$
\pi_{V_1}(\alpha^t_2(L))\simeq \pi_{V_1}(L)
\qquad
\pi_{V_2}(\alpha^t_1(L))\simeq \pi_{V_2}(L)
$$
$$
\pi_{V_1}(\alpha^t_1(L))\simeq \alpha^t_{1*}(\pi_{V_1}(L))
\qquad
\pi_{V_2}(\alpha^t_2(L))\simeq \alpha^t_{2*}(\pi_{V_2}(L))
$$
\end{prop}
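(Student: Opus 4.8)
\medskip

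\noindent\textbf{Proof plan.} The plan is to reduce all four quasi-isomorphisms to the first, $\pi_{V_1}(\alpha^t_2(L))\simeq\pi_{V_1}(L)$, together with its mirror $\pi_{V_2}(\alpha^t_1(L))\simeq\pi_{V_2}(L)$. Under the identification $V_1\times V_2\simeq T^*V_1$, the action $\alpha_2^t$ is the fiberwise (Liouville) dilation, while $\alpha_1^t$ factors as $\Phi^{(1)}_t\circ\alpha_2^t$, where $\Phi^{(1)}_t(v,\lambda)=(tv,t^{-1}\lambda)$ is the cotangent lift of the dilation $v\mapsto tv$ of $V_1$. This $\Phi^{(1)}_t$ is an honest symplectomorphism of $(T^*V_1,\omega_1)$; being the cotangent lift of a diffeomorphism of $V_1$, it preserves $\eta_1^2$, carries brane structures to brane structures, and intertwines microlocalization with pushforward, $\Phi^{(1)}_t(\mu_{V_1}(\CG))\simeq\mu_{V_1}(\alpha^t_{1*}(\CG))$ (on standard Lagrangians this is $\Gamma_{df}\mapsto\Gamma_{d(f\circ\alpha_1^{1/t})}$). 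Transporting the quasi-representable module $\tilde\pi_{V_1}(-)$ through $\Phi^{(1)}_t$ — using invariance of Floer complexes under $\Phi^{(1)}_t$ and adjunction for the isomorphism $\alpha_1^t$ — then gives $\pi_{V_1}(\Phi^{(1)}_t(M))\simeq\alpha^t_{1*}(\pi_{V_1}(M))$ for every brane $M$; taking $M=\alpha_2^t(L)$ and applying the first quasi-isomorphism yields the third. The fourth follows symmetrically, with $\Phi^{(2)}_t(v,\lambda)=(t^{-1}v,t\lambda)$ the cotangent lift of the dilation of $V_2$ and $\alpha_2^t=\Phi^{(2)}_t\circ\alpha_1^t$, from the mirror of the first.

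For the first quasi-isomorphism (the second being identical after exchanging $V_1$ and $V_2$), I would argue by non-characteristic invariance. Choose a smooth $\phi:\R\to\R^+$ that is constantly $1$ for $s\le 0$ and constantly $t$ for $s\ge 1$, and set $\fL_s=\alpha_2^{\phi(s)}(L)$. Since fiberwise dilation rescales $\omega_1$ and $\theta_1$, each $\fL_s$ is again an exact Lagrangian carrying the transported brane structure; as in the dilation-invariance discussion above, each $\fL_s$ admits a taming perturbation, and because $\phi$ takes values in a compact subinterval of $\R^+$ the resulting $\fL\hra\R\times V_1\times V_2$ is a one-parameter family of tame Lagrangian branes in the sense of Section~\ref{sec fourier for branes}. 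The key observation is that $\alpha_2^t$ acts trivially on directions in $V_2$, hence fixes the face $V_1\times V_2^\infty$ of the boundary of $\ol V_1\times\ol V_2$ pointwise, so $\ol{\fL_s}\cap(V_1\times V_2^\infty)=\ol L\cap(V_1\times V_2^\infty)$ independently of $s$. Now fix $\CF\in Sh_c(V_1)_\kappa$ and let $P$ be the perturbation of $\mu_{V_1}(\CF)$ used to compute $\tilde\pi_{V_1}(L)(\CF)$; since $\CF$ is compactly supported $P$ is compact along $V_1$, so $P^\infty\subset V_1\times V_2^\infty$, and the cone over $P^\infty$ is a conical Lagrangian $\Lambda\hra T^*V_1$ compact along $V_1$ with $P\in F_\Lambda(V_1\times V_2)_{pre}$ and $\Lambda^\infty\subset V_1\times V_2^\infty$; by the standard controlled perturbation separating $P$ from $L$ near infinity we may take $\Lambda^\infty$ disjoint from $\ol L\cap(V_1\times V_2^\infty)$. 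Then $\ol{\fL_s}\cap\Lambda^\infty=\emptyset$ for all $s$, so $\fL$ is $\Lambda$-non-characteristic, and Proposition~\ref{prop brane non-char in general} gives $\hom_{F(V_1\times V_2)_{pre}}(P,\alpha_2^t(L))\simeq\hom_{F(V_1\times V_2)_{pre}}(P,L)$, functorially in $\CF$. Hence $\tilde\pi_{V_1}(\alpha_2^t(L))\simeq\tilde\pi_{V_1}(L)$ as modules, and the sheaves they quasi-represent are quasi-isomorphic.

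The main obstacle I anticipate is the uniformity in $s$ of the non-characteristic condition: one must check carefully that fiberwise dilation of $T^*V_1$ leaves the stratum $\ol L\cap(V_1\times V_2^\infty)$ pointwise fixed, so that a single $\Lambda$ separates the perturbed $P=\mu_{V_1}(\CF)$ from every $\fL_s$, and one must keep track of the fact that the symmetric conical metric $g_{sym}$ is not dilation-invariant, so the taming data has to be re-chosen along the family exactly as in the cotangent-bundle case. The remaining steps are formal consequences of Proposition~\ref{prop brane non-char in general}, quasi-representability, and the elementary symplectic geometry of cotangent lifts of dilations.
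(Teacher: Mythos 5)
Your proposal is correct and matches the paper's argument in its essential structure: both reduce the third and fourth quasi-isomorphisms to the first two via the cotangent lift of the linear dilation of the base (the paper writes this lift as $\alpha_1^t\circ\alpha_2^{t^{-1}}$, you write $\alpha_1^t = \Phi^{(1)}_t\circ\alpha_2^t$, which is the same decomposition rearranged). For the first identity the paper argues more tersely by the dual move — scaling the standard test brane $\mu_{V_1}(\CF)$ by $\alpha_2^{t^{-1}}$ and noting this is again a standard brane quasi-isomorphic to the original (amounting to a change of defining function $f\mapsto t^{-1}f$) — whereas you move $L$ itself along the non-characteristic family $\alpha_2^{\phi(s)}(L)$ and invoke Proposition~\ref{prop brane non-char in general} directly; these are dual applications of the same non-characteristic invariance machinery, and your version of the argument is valid, arguably a bit more explicit about which hypotheses are being checked.
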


\begin{proof}
We prove the left hand column of assertions, the right hand column is the same.

For the first assertion, observe that the microlocalization $\mu_{V_1}$ is independent
of the dilation $\alpha^t_2$. To be precise, rather than scale the module $L$,
we can scale the image of $\mu_{V_1}$ by the inverse. 
But such scalings lead to quasi-isomorphic
calculations among standard branes.

For the second assertion, observe that the linear diffeomorphism $\alpha^t_1$
of the base $V_1$ induces the scaling $\alpha_1^t\circ \alpha_2^{t^{-1}}$
of its cotangent $T^*V_1\simeq V_1\times V_2$. Thus we have a
quasi-isomorphism
$$
\pi_{V_1}(\alpha_1^t \alpha_2^{t^{-1}}(L))\simeq \alpha^t_{1*}(\pi_{V_1}(L))
$$
since all of our constructions are invariant under linear diffeomorphisms.
On the other hand, by the first assertion applied to $\alpha_1^t(L)$, 
we have a quasi-isomorphism
$$
\pi_{V_1}(\alpha^{t^{-1}}_2\alpha_1^t(L))\simeq \pi_{V_1}(\alpha_1^t(L))
$$
Combining the above quasi-isomorphisms proves the second assertion.
\end{proof}

%%%%%%%%%%%%%%%%%%%%%%%%%%%%%%%%%%%%%%%%%%%%%
%%%%%%%%%%%%%%%%%%%%%%%%%%%%%%%%%%%%%%%%%%%%%

\subsection{Balanced branes}
Our main theorem will not apply to all branes $L\hra V_1\times V_2$,
but rather those satisfying a technical assumption which  we explain here.
(The main theorem fails without it.)

\medskip

Recall that the compactification 
$\ol V_1\times \ol V_2$ has a  codimension one boundary 
$B = (V_1 \times V_2^\infty) \coprod (V_1^\infty\times V_2)$ and 
a codimension two corner
$C=V_1^\infty \times V_2^\infty$.

Given a brane $L\hra V_1\times V_2$, let $L_C = \ol L \cap C\hra L^\infty$ be the intersection of the closure $\ol L$
with the corner $C$. Consider the cone 
$$
Cone(L_C) =\{(v_1, v_2)\in (\ol V_1\setminus\{0\})\times (\ol V_2\setminus\{0\})  | ([v_1] , [v_2]) \in L_C\}
\hra (\ol V_1\setminus\{0\})\times (\ol V_2\setminus\{0\}) 
$$
of nonzero elements whose projectivizations lie in $L_C$. 
By definition, it is invariant under the 
two commuting dilations $\alpha_1^t$, $\alpha_2^t$.

For $\delta_1, \delta_2\in \R^+$, consider the dilated brane 
$L(\delta_1, \delta_2) = \alpha_1^{\delta_1}\alpha_2^{\delta_2}L$.
We would expect that as $\delta_1, \delta_2 \to 0$, the dilated brane $L(\delta_1, \delta_2)$
would consist of two parts: a limit that collects along the closure
of the axes $\ol V_1\times\{0\}$, $\{0\}\times V_2$,
and the conical trace $Cone(L_C)$. In general, the situation is much more complicated.

By a balanced Lagrangian brane $L\hra V_1\times V_2$, we mean a
Lagrangian brane satisfying the following additional hypothesis:
the intersection $L_C$ of its closure $\ol L$ with the corner $C$ is
of the expected dimension:
$$
\dim L_C = \dim L - 2.
$$

This implies that for every neighborhood $\CN_{axes}$ of the closure of the axes 
$\ol V_1\times\{0\}$, $\{0\}\times V_2$, and every neighborhood $\CN_{cone}$
of the cone $Cone(L_C)$, there exists 
$\delta_1, \delta_2\in \R^+$ such that the dilated brane $L(\delta_1, \delta_2)$
lies in the union of the neighborhoods:
$$
L(\delta_1, \delta_2) \subset \CN_{axes} \cup \CN_{cone}.
$$

\begin{ex}
Suppose $V_1=  \R_x$, $V_2=\R_\xi$, and let $L\hra \R_x\times \R_\xi$ be a Lagrangian brane.
Then $L$ is symmetric if and only if the closure $\ol L\hra \ol \R_x \times \ol \R_\xi$
is disjoint from the four corners $\{(\pm\infty, \pm\infty)\}\subset \ol \R_x \times \ol \R_\xi$.
For instance, the Lagrangian $\{\xi = 1/x\}$ can underlie a balanced brane, 
but the Lagrangian $\{x=\xi\}$ can not underlie a balanced brane.
\end{ex}

The above example explains our use of the term balanced. The 
intersection of the closure $\ol L$ with the corners $C$ is unstable, 
and threatens to teeter over in the direction of either factor.

%%%%%%%%%%%%%%%%%%%%%%%%%%%%%%%%%%%%%%%%%%%%%

%

%%%%%%%%%%%%%%%%%%%%%%%%%%%%%%%%%%%%%%%%%%%%

%%%%%%%%%%%%%%%%%%%%%%%%%%%%%%%%%%%%%%%%%%%%%

\subsection{Main theorem}
This section contains the main technical result of this paper.
In order to help the reader follow our arguments, we have
isolated the case when $\dim V_1 =1$. Furthermore, the arguments of the general case
are best understood as a product of copies of the dimension one case.

%%%%%%%%%%%%%%%%%%%%%%%%%%%%%%%%%%%%%%%%%%%%%

\subsubsection{Case of dimension one}
The fearless reader could skip this section, and continue in the next section
with the general case.
But many of the intricacies of the general case already appear here.
Furthermore, the constructions of the general case can be understood
as a product of constructions described here.
With this in mind, our aim here is not to give the most concrete proof possible, 
but rather to argue in parallel with what will be required for the general case.

We will write $\R_x$ to denote $V_1$ with coordinate $x$,
and $\R_\xi$ to denote $V_2 = V_1^*$ with dual coordinate $\xi$.
Recall that a brane $L\hra \R_x\times \R_\xi$ is balanced if its closure 
$\ol L \hra \ol \R_x\times\ol \R_\xi$
is disjoint from the four corners $\{(\pm\infty, \pm\infty)\}\subset \ol \R_x \times \ol \R_\xi$.

\begin{thm}
Let $L\hra \R_x\times \R_\xi$ be a balanced brane. Then there are quasi-isomorphisms
$$
(\Upsilon(\pi_{\R_x}(L)))^\w \simeq \Upsilon(\pi_{\R_\xi}(L))
\qquad
(\Upsilon(\pi_{\R_\xi}(L))^\vee \simeq \Upsilon(\pi_{\R_x}(L)).
$$
\end{thm}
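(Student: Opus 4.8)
The strategy is to reduce the desired pair of quasi-isomorphisms to the characterizing calculation for the Fourier transform recalled in the subsection on ``characterizing calculations,'' namely that for a conic sheaf $\CG$ on $\R_\xi$ and an open convex cone $u:U\hra\R_\xi$ with closed polar cone $v:U^\circ\hra\R_x$, one has $\hom_{Sh_c(\R_\xi)}(u_!\C_U,\CG)\simeq\hom_{Sh_c(\R_x)}(v_*\C_{U^\circ},\CG^\vee)$, compatibly with inclusions of cones; and dually. By Lemma~\ref{lem sections calc}, the sections of $\Upsilon(\pi_{\R_x}(L))$ over the standard open cone $Q$ (and over $-Q$, and over all of $\R_x$, which exhausts the conic topology in dimension one) are computed by pairing $\pi_{\R_x}(L)$ against $q^\vareps_!\C_{Q^\vareps}$ (resp. $q^{-\vareps}_*\C_{Q^{-\vareps}}$) after a small dilation $\alpha^\delta_{1*}$, and by Proposition~\ref{prop dilate} this dilation can be absorbed into a dilation of the brane $L$ itself. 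So the whole statement amounts to comparing, for each such cone, the Floer-theoretic functionals $\tilde\pi_{\R_x}(L)$ and $\tilde\pi_{\R_\xi}(L)$ evaluated on the standard/costandard branes $\mu_{\R_x}(q^\vareps_!\C_{Q^\vareps})$ and $\mu_{\R_\xi}$ of the polar data.

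The key point is that these two a priori different Floer pairings are in fact pairings of $L$ with \emph{one and the same brane} in $V_1\times V_2$. Concretely, for a given open convex cone $U\subset\R_\xi$ not containing a fiber, its polar cone $U^\circ\subset\R_x$ has open interior, and I would exhibit a single noncompact Lagrangian brane $P_U\hra\R_x\times\R_\xi$ — built from a standard graph Lagrangian over $U^\circ$ in the $T^*\R_x$-picture — whose image under $\mu_{\R_x}^{-1}$ is the costandard object attached to $\interiorsymbol(U^\circ)$ and whose image under $\mu_{\R_\xi}^{-1}$ (after the symplectic reidentification $\iota:T^*\R_\xi\risom V_1\times V_2$, with the bicanonical trivialization matched via the positively-oriented path $1\rightsquigarrow-1$) is the standard object attached to $U$. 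Granting this, $\hom(P_U,L)$ computes both $\Gamma(\interiorsymbol(U^\circ),\pi_{\R_x}(L))$ and $\Gamma(U,\pi_{\R_\xi}(L))$ up to the appropriate orientation/shift bookkeeping, so the two conic limits have the same sections over every cone and its polar, with compatible restriction maps — which by the characterization of the Fourier transform gives exactly $(\Upsilon(\pi_{\R_x}(L)))^\w\simeq\Upsilon(\pi_{\R_\xi}(L))$, and the inverse statement symmetrically.

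The main obstacle — and the place where balancedness is essential — is controlling the behavior at infinity when one actually performs this identification. The brane $P_U$ is noncompact in \emph{both} factors, so neither $F(T^*\R_x)_\kappa$ nor $F(T^*\R_\xi)_\kappa$ sees it directly; one must instead run the non-characteristic invariance of Floer calculations (Proposition~\ref{prop brane non-char in general}) to move $L$, via dilations $\alpha_1^{\delta_1}\alpha_2^{\delta_2}$, into the region where it splits into its near-axis part and its conical trace $\on{Cone}(L_C)$. Balancedness — the condition $\dim L_C=\dim L-2$, i.e. that $\ol L$ avoids the corner $C=V_1^\infty\times V_2^\infty$ in the expected codimension — is exactly what guarantees that this dilation isotopy is non-characteristic with respect to the relevant conical Lagrangian $\Lambda$ attached to $P_U$ near $C$: if $\ol L$ met $C$ in excess dimension, the family $\ol\fL_s$ would hit $\Lambda^\infty$ and the Floer complexes would jump, so the $\vareps$-and-$\delta$ choreography of Lemma~\ref{lem sections calc} would no longer see a stable answer. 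Checking that for small enough $\delta_1,\delta_2$ the only contributions to $\hom(P_U,L)$ come from the ``bulk'' (reproducing the sheaf sections) and that the ``corner'' contributions vanish, uniformly as the cone $U$ is shrunk or enlarged so that restriction maps match, is the delicate heart of the argument; the dimension-one case is where I would first work this out, since there $L_C$ is a finite set of points and ``balanced'' just means $\ol L$ misses the four corners, making the non-characteristic isotopy completely explicit.
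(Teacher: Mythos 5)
Your overall scheme is the paper's: reduce to the cone-by-cone characterization of the Fourier transform, and construct a single bridging brane $P_U$ (the hyperbola $\{(x-\vareps_x)(\xi-\vareps_\xi)=1\}$) so that pairing a suitable dilate of $L$ against it simultaneously realizes the relevant sections on both factors. But two of your steps do not match how the argument has to run, and together they misplace where the real work lies.

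First, the standard/costandard assignment is reversed. The formula to verify pairs $\Upsilon(\pi_{\R_\xi}(L))$ against $u_!\C_U$, the costandard object in $\R_\xi$, and $\Upsilon(\pi_{\R_x}(L))$ against $\interiorsymbol(v)_*\C_{\interiorsymbol(U^\circ)}$, the standard object in $\R_x$. So $P_U$ must be the \emph{standard} brane over the polar cone in the $T^*\R_x$-picture and the \emph{costandard} brane over $U$ in the $T^*\R_\xi$-picture; you state the opposite, and your own phrasing is already inconsistent on this point (``standard graph Lagrangian'' yet ``costandard object under $\mu_{\R_x}^{-1}$'').

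Second, and more structurally: the non-characteristic invariance of Proposition~\ref{prop brane non-char in general} is not applied to a dilation of $L$. Dilating $L$ genuinely changes the sheaves $\pi_{\R_x}(L)$ and $\pi_{\R_\xi}(L)$, so there is no Floer-theoretic invariance to invoke there; the algebraic effect of dilation is absorbed instead by Lemma~\ref{lem sections calc} together with Proposition~\ref{prop dilate}, which relate sections of $\Upsilon(\pi(L))$ to sections of $\pi(L(\delta_x,\delta_\xi))$ for small $\delta$. Balancedness only guarantees that for small $\delta_x,\delta_\xi$ the boundary of $L(\delta_x,\delta_\xi)$ collects near the axes, away from the boundary of $P(\vareps_x,\vareps_\xi)$, so that the Floer pairing is even defined. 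The objects that are actually moved non-characteristically are the \emph{test} branes: holding $L(\delta_x,\delta_\xi)$ fixed, one runs two separate $[0,1]$-families $\fP_t$, one carrying $P(\vareps_x,\vareps_\xi)$ to the compactly supported truncated brane $R(\vareps_x,r_x)$ (the microlocalization of the test sheaf giving the $\R_x$-sections), and another carrying it to $T(\vareps_\xi,r_\xi)$ (for the $\R_\xi$-sections). These two test-side isotopies are where Proposition~\ref{prop brane non-char in general} enters. Your picture of dilating $L$ ``into the region where it splits into its near-axis part and conical trace'' is a singular limit that no isotopy reaches, and your proposed ``bulk'' versus ``corner'' contribution count is not where the delicacy is: after dilation there are no corner contributions to analyze, and the nontrivial step is producing the two test-brane isotopies and checking they stay non-characteristic.
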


\begin{proof}
We prove the first identity, the second follows immediately by applying the inverse
Fourier transform to the first.

Consider an object $\CF\in Sh_c(\R_x/\R^+)$, and its Fourier transform $\CF^\w\in Sh_c(\R_\xi/\R^+)$.
Recall that for any open convex cone $u:U\hra \R_\xi$, with closed polar cone $v:U^\circ\hra \R_x$
with interior $\interiorsymbol (v):\interiorsymbol (U^\circ)\hra \R_x$,
we have a quasi-isomorphism
$$
\hom_{Sh_c(\R_\xi)}(u_!\C_U,\CF^\w) \simeq
\hom_{Sh_c(\R_x)}(\interiorsymbol (v)_*\C_{\interiorsymbol (U^\circ)}, \CF).
$$
Furthermore, for the inclusion of 
open convex cones $U_0\hra U_1\hra \R_\xi$,
the above quasi-isomorphisms fit into a commutative (at the level of cohomology) square.
The resulting compatible collection of quasi-isomorphisms characterizes $\CF^\w$.

Thus to prove the first assertion, it suffices to establish the formula 
\begin{equation}\label{key formula dim = 1}\tag{\dag}
\hom_{Sh_c(\R_\xi)}(u_!\C_U,\Upsilon(\pi_{\R_\xi}(L))) \simeq
\hom_{Sh_c(\R_x)}(\interiorsymbol (v)_*\C_{\interiorsymbol (U^\circ)}, \Upsilon(\pi_{\R_x}(L))).
\end{equation}
 compatibly for all open convex cones.

In dimension one, it is possible to list all of the open convex cones
$$
 \begin{array}{cccccc}
U=\R^+_\xi
&&
U=\R^-_\xi
&&
U=\R_\xi\\

U^\circ=\ol \R^+_x
&&
U^\circ=\ol \R^-_x
&&
U^\circ=\{0\}
\end{array}
$$

  We will establish formula~(\ref{key formula dim = 1}) for $U=\R^+_\xi$, $U^\circ=\ol \R^+_x$.
 We leave it to the reader to modify the arguments for the other cases,
 and to check that the constructions are compatible with inclusions.

\medskip

Thus our aim is to show that 
there is a quasi-isomorphism
\begin{equation}\label{key formula specialized dim = 1}\tag{\ddag}
\hom_{Sh_c(\R_\xi)}(u_!\C_{\R^+_\xi},\Upsilon(\pi_{\R_\xi}(L))) \simeq
\hom_{Sh_c(\R_x)}(u_*\C_{\R^+_x}, \Upsilon(\pi_{\R_x}(L))).
\end{equation}

Our strategy will be to construct a brane in $\R_x\times\R_\xi$ such that
both sides of formula~(\ref{key formula  specialized dim = 1}) are quasi-isomorphic to its Floer pairing
with a dilation of $L$. 

Fix a pair $\varepsilon_x, \vareps_\xi\in \R$
(soon to be
specialized to the case $\vareps_x < 0$, $\vareps_\xi > 0$), 
and consider the open subsets
$$
q({\vareps_x}): Q({\vareps_x}) =\{x\in \R_x | x >\vareps_x\} \hra \R_x
\qquad
q({\vareps_\xi}): Q({\vareps_\xi}) =\{\xi\in \R_\xi | \xi>\vareps_\xi\} \hra \R_\xi
$$
and the Lagrangian
$$
P({\vareps_x,\vareps_\xi}) = \{(x, \xi)\in \R_x\times\R_\xi | x >\vareps_x , (x-\vareps_x)(\xi-\vareps_\xi) = 1\} \hra \R_x\times\R_\xi.
$$
We equip $P({\vareps_x,\vareps_\xi})$ with the brane structure
coming from its identification with the standard brane $L_{Q({\vareps_x})*}\hra T^*\R_x$,
or equivalently its identification with the costandard brane $L_{Q({\vareps_\xi})!}\hra T^*\R_\xi$.
Note that the boundary of ${P({\vareps_x,\vareps_\xi})}$ inside of $\ol \R_x\times\ol \R_\xi$
consists of the two points $(\vareps_x, +\infty)$
and $(+\infty, \vareps_\xi)$.

\medskip

For any $\delta_x,\delta_\xi\in\R^+$,
consider the dilated brane
$$
L(\delta_x,\delta_\xi) =\alpha_x({\delta_x})\alpha_\xi({\delta_\xi})L \hra \R_x\times\R_\xi.
$$
By assumption, $L$ is balanced, so the boundary of $L$
 inside of $\ol \R_x\times\ol \R_\xi$
 is disjoint from the corners
 $\{(\pm\infty, \pm\infty)\}$.
Thus for small $\delta_x,\delta_\xi\in\R^+$,
the boundary of $L(\delta_x,\delta_\xi)$
is arbitrarily close to the points $\{(\pm \infty, 0), (0,\pm \infty)\}$.
 In particular, the boundary of $L(\delta_1, \delta_2)$ 
 does not intersect the boundary of $P(\vareps_x,\vareps_\xi)$.
 Thus 
  for sufficiently small $\delta_x,\delta_\xi \in \R^+$,
it makes sense to 
consider the Floer complex
\begin{equation}\label{key floer complex dim = 1}\tag{Fl}
\hom_{F(\R_x\times \R_\xi)}(P({\vareps_x,\vareps_\xi}) ,L(\delta_x,\delta_\xi) )
\end{equation}

We claim that for fixed $\vareps_x < 0$, $\vareps_\xi > 0$, 
and sufficiently small $\delta_x,\delta_\xi \in \R^+$,
both sides of formula~(\ref{key formula specialized dim = 1})
are quasi-isomorphic to the Floer complex~(\ref{key floer complex dim = 1}).
We will first explain why the right hand side of~(\ref{key formula  specialized dim = 1})
is quasi-isomorphic to~(\ref{key floer complex dim = 1}), and
then give the parallel arguments for the left hand side.

\medskip

Thus our immediate aim is to show that  for fixed $\vareps_x < 0$, $\vareps_\xi > 0$, 
and sufficiently small $\delta_x,\delta_\xi \in \R^+$,
there is a quasi-isomorphism
\begin{equation}\label{identify rhs n=1}\tag{rhs}
\hom_{F(\R_x\times \R_\xi)}(P({\vareps_x,\vareps_\xi}) ,L(\delta_x,\delta_\xi) )
\simeq
\hom_{Sh_c(\R_x)}(u_*\C_{\R^+_x}, \Upsilon(\pi_{\R_x}(L))).
\end{equation}

Let us unpack the right hand side of the sought after identity~(\ref{identify rhs  n=1}).
To that end, for large $r_x\in\R^+$, consider the intervals
$$
\xymatrix{
[\vareps_x, r) \ar@{^(->}[r]^-{c} & [\vareps_x, r] \ar@{^(->}[r]^-{d} & \R_x,
}
$$
and define the pushforward
$$ 
\CR(\vareps_x, r_x) = d_*c_!\C_{[\vareps_x, r)}\in Sh_c(\R_x).
$$ 
Consider the corresponding
  brane $R{(\varepsilon_x, r_x)}=\mu_{\R_x}(\CR(\vareps_x, r_x))$
  with underlying Lagrangian 
  $$
  R{(\varepsilon_x , r_x)}= \{(x,\xi)\in \R_x\times\R_\xi| x\in (\vareps_x, r_x), \xi(x-\vareps_x)(r_x-x) = 1\}
  \hra \R_x\times\R_\xi.
  $$

\medskip

As long as $\delta_\xi\in \R^+$ is sufficiently small,
by Lemma~\ref{lem sections calc}, Proposition~\ref{prop dilate}, standard adjunctions,
and the definition of $\pi_{\R_x}$,
 we have quasi-isomorphisms
\begin{eqnarray*}
\hom_{Sh_c(\R_x)}(u_*\C_{\R^+_x}, \Upsilon(\pi_{\R_x}(L)))
& \simeq & 
\hom_{Sh_c(E)}(\CR{(\varepsilon_x, r_x)},\pi_{\R_x}(L(\delta_x, \delta_\xi)))\\
& \simeq &
\hom_{F(\R_x\times \R_\xi)}(R{(\varepsilon_x, r_x)}, L(\delta_x, \delta_\xi)).
\end{eqnarray*}

Thus to establish the sought after identity~(\ref{identify rhs  n=1}),
it suffices to establish a quasi-isomorphism
\begin{equation}\label{identify rhs reinterpret n=1}\tag{$\star_x$}
\hom_{F(\R_x\times \R_\xi)}(P({\vareps_x,\vareps_\xi}) ,  L(\delta_x,\delta_\xi))
\simeq
\hom_{F(\R_x\times \R_\xi)}(R{(\varepsilon_x, r_x)}, L(\delta_x, \delta_\xi)).
\end{equation}

Again, since $L$ is balanced,
for small $\delta_x,\delta_\xi\in\R^+$,
the boundary of $L(\delta_x,\delta_\xi)$
 inside of $\ol \R_x\times\ol \R_\xi$
is arbitrarily close to the points $\{(\pm \infty, 0), (0,\pm \infty)\}$.
  Thus we can find a $[0,1]$-family of branes 
  $
  \fP_t\hra \R_x\times\R_\xi
  $
  such that 
  $$
   \fP_0({\vareps_x,\vareps_\xi}) =  P({\vareps_x,\vareps_\xi})
   \qquad
    \fP_1({\vareps_x,\vareps_\xi}) = R({\vareps_x, r_x}) 
    $$
    and $\fP_t$ is non-characteristic with respect to $L(\delta_x,\delta_\xi)$.
    In other words, the boundary of $\fP_t$ is disjoint from the boundary of 
     $L(\delta_x,\delta_\xi)$, for all time $t\in [0,1]$.

Thus by Proposition~\ref{prop brane non-char in general}, we have the sought after identity~(\ref{identify rhs reinterpret n=1}),
and in turn the identity~(\ref{identify rhs n=1}).

\medskip

Our next aim is to give parallel arguments explaining
why the left hand side of~(\ref{key formula  specialized dim = 1})
is quasi-isomorphic to~(\ref{key floer complex dim = 1}),
So we need to verify that  for fixed $\vareps_x < 0$, $\vareps_\xi > 0$, 
and sufficiently small $\delta_x,\delta_\xi \in \R^+$,
there is a quasi-isomorphism
\begin{equation}\label{identify lhs n=1}\tag{lhs}
\hom_{F(\R_x\times \R_\xi)}(P({\vareps_x,\vareps_\xi}) ,L(\delta_x,\delta_\xi) )
\simeq
\hom_{Sh_c(\R_\xi)}(u_!\C_{\R^+_\xi},\Upsilon(\pi_{\R_\xi}(L))).
\end{equation}

Let us unpack the right hand side of the sought after identity~(\ref{identify lhs n=1}).
To that end, for large $r_\xi\in\R^+$, consider the interval
$
c:(\vareps_\xi, r_\xi) \hra  \R_\xi
$
and define the pushforward
$$ 
\CT{(\vareps_\xi, r_\xi)} = c_!\C_{(\vareps_\xi, r_\xi)}\in Sh_c(\R_\xi).
$$ 
Consider the corresponding
costandard  brane $T{(\varepsilon_\xi, r_\xi)}=\mu_{\R_\xi}(\CT{(\vareps_\xi, r_\xi)})$
  with underlying Lagrangian 
  $$
 T{( \varepsilon_\xi, r_\xi)}
 = \{(x,\xi)\in \R_x\times\R_\xi| \xi\in (\vareps_\xi, r_\xi), x(\xi-\vareps_\xi)(r_\xi-\xi) = 
  2\xi - r_\xi - \vareps_\xi\}
  \hra \R_x\times\R_\xi.
  $$

\medskip

As long as $\delta_x\in \R^+$ is sufficiently small,
by Lemma~\ref{lem sections calc}, Proposition~\ref{prop dilate}, standard adjunctions,
and the definition of $\pi_{\R_\xi}$,
 we have quasi-isomorphisms
\begin{eqnarray*}
\hom_{Sh_c(\R_\xi)}(u_!\C_{\R^+_\xi}, \Upsilon(\pi_{\R_\xi}(L)))
& \simeq & 
\hom_{Sh_c(E)}(\CT{( \varepsilon_\xi, r_\xi)} ,\pi_{\R_\xi}(L(\delta_x, \delta_\xi)))\\
& \simeq &
\hom_{F(\R_x\times \R_\xi)}( T{( \varepsilon_\xi, r_\xi)}, L(\delta_x, \delta_\xi)).
\end{eqnarray*}

Thus to establish the sought after identity~(\ref{identify lhs  n=1}),
it suffices to establish a quasi-isomorphism
\begin{equation}\label{identify lhs reinterpret n=1}\tag{$\star_\xi$}
\hom_{F(\R_x\times \R_\xi)}(P({\vareps_x,\vareps_\xi}) ,  L(\delta_x,\delta_\xi))
\simeq
\hom_{F(\R_x\times \R_\xi)}( T{( \varepsilon_\xi, r_\xi)}, L(\delta_x, \delta_\xi)).
\end{equation}

Again, since $L$ is balanced,
for small $\delta_x,\delta_\xi\in\R^+$,
the boundary of $L(\delta_x,\delta_\xi)$
 inside of $\ol \R_x\times\ol \R_\xi$
is arbitrarily close to the points $\{(\pm \infty, 0), (0,\pm \infty)\}$.
  Thus we can find a $[0,1]$-family of branes 
  $
  \fP_t\hra \R_x\times\R_\xi
  $
  such that 
  $$
   \fP_0({\vareps_x,\vareps_\xi}) =  P({\vareps_x,\vareps_\xi})
   \qquad
    \fP_1({\vareps_x,\vareps_\xi}) =T{( \varepsilon_\xi, r_\xi)}
    $$
    and $\fP_t$ is non-characteristic with respect to $L(\delta_x,\delta_\xi)$.
    In other words, the boundary of $\fP_t$ is disjoint from the boundary of 
     $L(\delta_x,\delta_\xi)$, for all time $t\in [0,1]$.

Thus by Proposition~\ref{prop brane non-char in general}, 
we have the sought after identity~(\ref{identify lhs reinterpret n=1}),
and in turn the identity~(\ref{identify lhs n=1}).

\medskip

Putting the preceding together, we have identified the left~(\ref{identify lhs n=1}) and right hand
~(\ref{identify rhs n=1}) sides
of  formula~(\ref{key formula specialized dim = 1}) with the 
Floer complex~(\ref{key floer complex dim = 1}).
As mentioned above, we leave to the reader to modify the arguments
to establish formula~(\ref{key formula dim = 1}) in the other cases,
and to check its compatibility with inclusions.
This establishes the first assertion of the theorem.
\end{proof}

%%%%%%%%%%%%%%%%%%%%%%%%%%%%%%%%%%%%%%%%%%%%%

\subsubsection{General case}

Now we arrive the at main technical result of this paper in Theorem~\ref{main thm}.
We hope the reader has followed the arguments of the preceding section
in the case of dimension one. 
Here we return to the general setting
of an arbitrary real finite dimensional vector space $V_1$ with dual $V_2$.

\begin{thm}\label{main thm}
Let $L\hra V_1\times V_2$ be a balanced brane. Then there are quasi-isomorphisms
$$
(\Upsilon(\pi_{V_1}(L)))^\w \simeq \Upsilon(\pi_{V_2}(L))
\qquad
(\Upsilon(\pi_{V_2}(L))^\vee \simeq \Upsilon(\pi_{V_1}(L)).
$$
\end{thm}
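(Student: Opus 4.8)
The plan is to reduce the general case to the one-dimensional argument already given, exactly as the authors signal in the outline (``the arguments of the general case are best understood as a product of copies of the dimension one case''). As in dimension one, it suffices to prove the first quasi-isomorphism, since the second follows by applying the inverse Fourier transform. By the characterization of the Fourier transform recalled in the subsection on characterizing calculations, $\CF^\w$ is determined by the compatible system of quasi-isomorphisms
$$
\hom_{Sh_c(V_2)}(u_!\C_U, \CF^\w) \simeq \hom_{Sh_c(V_1)}(\interiorsymbol(v)_*\C_{\interiorsymbol(U^\circ)}, \CF)
$$
ranging over open convex cones $u:U\hra V_2$ with closed polar cones $v:U^\circ \hra V_1$, together with the commuting squares associated to inclusions $U_0\hra U_1$. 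So the whole theorem reduces to establishing, for $\CF = \pi_{V_1}(L)$ with Fourier transform controlled by $\pi_{V_2}(L)$, the key formula
$$
\hom_{Sh_c(V_2)}(u_!\C_U, \Upsilon(\pi_{V_2}(L))) \simeq \hom_{Sh_c(V_1)}(\interiorsymbol(v)_*\C_{\interiorsymbol(U^\circ)}, \Upsilon(\pi_{V_1}(L)))
$$
compatibly for all open convex cones $U$, and then checking compatibility with inclusions.

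The core step is to construct, for a fixed open convex cone $U$ and its polar $U^\circ$, a single ``pairing brane'' $P = P(\vareps)\hra V_1\times V_2$ — depending on a vector of shift parameters $\vareps$ — whose Floer pairing with a suitable dilation $L(\delta_1,\delta_2) = \alpha_1^{\delta_1}\alpha_2^{\delta_2}L$ simultaneously computes both sides. The natural candidate is a product construction: after choosing coordinates $V_1\simeq \R^n_x$, $V_2\simeq \R^n_\xi$ adapted so that $U$ and $U^\circ$ are (linear images of) standard coordinate quadrants, take $P$ to be (a linear image of) the product over $i=1,\dots,n$ of the one-dimensional pairing branes $P(\vareps_{x,i},\vareps_{\xi,i})$ from the dimension-one proof. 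Since all constructions — microlocalization, $\Upsilon$, the functionals $\pi_{V_1}$, $\pi_{V_2}$, and the Fukaya $A_\infty$-structure built with the symmetric conical data — are invariant under linear diffeomorphisms and behave multiplicatively for products of vector spaces, one then runs the same two chains of identifications as in dimension one. On the $V_1$-side: by Lemma~\ref{lem sections calc}, Proposition~\ref{prop dilate}, adjunction, and the definition of $\pi_{V_1}$, the right-hand side is quasi-isomorphic to $\hom_{F(V_1\times V_2)_{pre}}(R(\vareps,r), L(\delta_1,\delta_2))$ for a standard brane $R(\vareps,r) = \mu_{V_1}(\CR)$ associated to a pushforward of a constant sheaf on a large box; symmetrically on the $V_2$-side one gets a costandard brane $T(\vareps,r)$. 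It then remains to produce non-characteristic $[0,1]$-families $\fP_t$ of branes with $\fP_0 = P$, $\fP_1 = R(\vareps,r)$ (resp. $T(\vareps,r)$), whose boundaries at infinity stay disjoint from the boundary of $L(\delta_1,\delta_2)$ for all $t$; Proposition~\ref{prop brane non-char in general} then gives the two required quasi-isomorphisms and hence the key formula.

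The non-characteristic isotopy step is where the balanced hypothesis does its work and is the main obstacle. Because $L$ is balanced, $\dim L_C = \dim L - 2$, so for small $\delta_1,\delta_2$ the boundary at infinity of $L(\delta_1,\delta_2)$ concentrates near the closure of the coordinate axes (the strata $\ol V_1\times\{0\}$ and $\{0\}\times V_2$ at infinity) together with the conical trace $Cone(L_C)$, staying away from the ``generic'' corner directions. The pairing brane $P$ and the comparison branes $R(\vareps,r)$, $T(\vareps,r)$ are designed so that their boundaries at infinity hit exactly those generic corner directions — the points $(\pm\infty,\vareps_\xi)$, $(\vareps_x,+\infty)$ and their products — which can be kept disjoint from a neighborhood of $\mathcal N_{axes}\cup \mathcal N_{cone}$. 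The delicate part is to carry this out uniformly in the $n$ factors: the fringed-set bookkeeping of Section~\ref{sect Fukaya category}, together with the freedom to do further variable dilations of the functions and branes, must be arranged so that a single choice of $(\delta_1,\delta_2)$ and of the isotopy parameters works for all coordinate directions at once, and so that no critical event occurs near infinity during the isotopy. Finally, compatibility with inclusions $U_0\hra U_1$ follows as in dimension one: an inclusion of cones induces inclusions $R_0\hookrightarrow R_1$ (resp. $T_1\hookrightarrow T_0$) of the comparison branes and a morphism of the pairing branes $P(\vareps_1)\to P(\vareps_0)$, and naturality of Proposition~\ref{prop brane non-char in general} makes the relevant square commute on cohomology, matching the square recorded in the subsection on characterizing calculations. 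Assembling these pieces identifies $\Upsilon(\pi_{V_2}(L))$ with $(\Upsilon(\pi_{V_1}(L)))^\w$, proving the theorem.
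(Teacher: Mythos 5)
Your proposal follows essentially the same route as the paper's: reduce to the first identity, characterize the Fourier transform by sections over open convex cones, build the pairing brane $P(\vareps_x,\vareps_\xi)$ as a product of the one‑dimensional hyperbolas, compare it to standard/costandard truncation branes by non‑characteristic isotopies via Proposition~\ref{prop brane non-char in general}, and use the balanced hypothesis to keep the dilated brane $L(\delta_1,\delta_2)$ away from the boundary of $\fP_t$ at the corner $C$. Two small points the paper makes explicit and you elide. First, you announce the key formula ``compatibly for all open convex cones'' and then produce $P$ only for $U$ a linear image of the standard quadrant; this is fine, but you need to record that it suffices to verify the formula on a collection of open cones generating the conic topology (quadrant cones together with $V_2$ itself), since a general open convex cone is not a quadrant. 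Second — and this is the substantive omission — the non‑characteristic isotopy step requires restricting to quadrant cones $U$ that are \emph{generic}, in the sense that the biconic Lagrangian $\Lambda_U$ of conormals to the facets of $\partial U$ is disjoint from the conical trace $\Lambda=\mathrm{Cone}(L_C)$ away from a neighborhood of the axes; the paper checks by a dimension count that this is a generic condition and that such generic quadrant cones still generate the conic topology. Your phrases ``freedom to do further variable dilations'' and ``designed to hit exactly those generic corner directions'' gesture at this, but the constraint lives in the choice of test cone $U$, not in the isotopy bookkeeping, and without stating it the construction of a $\Lambda$-non-characteristic family $\fP_t$ from $P$ to $R(\vareps_x,r_x)$ (or $T(\vareps_\xi,r_\xi)$) does not go through for $n\geq 2$.
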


Before giving the proof, let us record the following special case
which we will apply in the context of Springer theory.

\begin{cor}
Suppose $L$ is conical along the second factor in the sense that $\alpha_2^t L = L$, 
for all $t\in \R^+$. Then we have
$
(\Upsilon(\pi_{V_1}(L))^\w \simeq \pi_{V_2}(L).
$
\end{cor}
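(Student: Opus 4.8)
The corollary follows by specializing the main theorem, so the plan is to apply Theorem~\ref{main thm} and then simplify the right-hand side using the hypothesis that $L$ is conical along the second factor. First I would invoke the theorem to obtain the quasi-isomorphism $(\Upsilon(\pi_{V_1}(L)))^\w \simeq \Upsilon(\pi_{V_2}(L))$. It then remains only to identify $\Upsilon(\pi_{V_2}(L))$ with $\pi_{V_2}(L)$ itself. The key observation is that $\pi_{V_2}(L)$ is already a conic (i.e.\ $\R^+$-equivariant) object of $Sh_c(V_2)$, and on such objects the limit functor $\Upsilon$ acts as the identity (more precisely, it recovers the object viewed in $Sh_c(V_2/\R^+)$, which under the forgetful identification is the original sheaf).

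To see that $\pi_{V_2}(L)$ is conic, I would use Proposition~\ref{prop dilate}: the hypothesis $\alpha_2^t L = L$ gives $\pi_{V_2}(L) = \pi_{V_2}(\alpha_2^t L) \simeq \alpha^t_{2*}(\pi_{V_2}(L))$ for all $t\in\R^+$, which is exactly the statement that $\pi_{V_2}(L)$ is $\R^+$-equivariant for the dilation action on $V_2$. Once we know $\pi_{V_2}(L)$ is conic, the computation $\Upsilon(\pi_{V_2}(L)) \simeq \pi_{V_2}(L)$ follows from unwinding the definition $\Upsilon = j_0^* i_{+*} \alpha^*$: pulling back a conic sheaf along the action map $\alpha$, extending by $i_{+*}$ over $\R^{\geq 0}$, and restricting to the zero fiber returns the original sheaf, since for a conic sheaf the family $\alpha^t_*\CF$ is (canonically isomorphic to) constant in $t$. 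One can also see this via Lemma~\ref{lem sections calc}: for conic $\CF$, the sections $\alpha^\delta_*(\CF)$ over $Q^\varepsilon$ are independent of $\delta$ up to the dilation identification, so both displayed identifications there collapse to sections of $\CF$ itself, and these determine $\Upsilon(\CF)\simeq\CF$.

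There is no real obstacle here; the only point requiring mild care is the compatibility of the conicity statement from Proposition~\ref{prop dilate} with the \emph{structure} (as opposed to mere property) of $\R^+$-equivariance — but since $\R^+$ is contractible, equivariance is a property, so the quasi-isomorphisms $\pi_{V_2}(L)\simeq\alpha^t_{2*}\pi_{V_2}(L)$ suffice to place $\pi_{V_2}(L)$ in the essential image of $p^*:Sh_c(V_2/\R^+)\to Sh_c(V_2)$. Combining $(\Upsilon(\pi_{V_1}(L)))^\w \simeq \Upsilon(\pi_{V_2}(L))$ with $\Upsilon(\pi_{V_2}(L))\simeq\pi_{V_2}(L)$ yields the claimed $(\Upsilon(\pi_{V_1}(L)))^\w \simeq \pi_{V_2}(L)$.
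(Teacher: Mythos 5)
Your proof is correct and follows the same route as the paper: invoke Theorem~\ref{main thm}, use Proposition~\ref{prop dilate} applied to the hypothesis $\alpha_2^t L = L$ to conclude $\pi_{V_2}(L)$ is conic, and then use that $\Upsilon$ is the identity on conic objects. The paper's own proof is a one-line citation of exactly these three ingredients; you have simply unpacked them.
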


\begin{proof}
Follows from the theorem, Proposition~\ref{prop dilate}, and the fact that $\Upsilon$ is the identity
functor on conic objects.
\end{proof}

\begin{proof}
[Proof of Theorem~\ref{main thm}]
We prove the first identity, the second follows immediately by applying the inverse
Fourier transform to the first.

Consider an object $\CF\in Sh_c(V_1/\R^+)$, and its Fourier transform $\CF^\w\in Sh_c(V_2/\R^+)$.
Recall that for any convex open cone $u:U\hra V_2$, with closed polar cone $v:U^\circ\hra V_1$
with interior $\interiorsymbol (v):\interiorsymbol (U^\circ)\hra V_1$,
we have a quasi-isomorphism
$$
\hom_{Sh_c(V_2)}(u_!\C_U,\CF^\w) \simeq
\hom_{Sh_c(V_1)}(\interiorsymbol (v)_*\C_{\interiorsymbol (U^\circ)}, \CF).
$$
Furthermore, for the inclusion of 
open convex cones $U_0\hra U_1\hra V_2$,
the above quasi-isomorphisms fit into a commutative (at the level of cohomology) square.
The resulting compatible collection of quasi-isomorphisms characterizes $\CF^\w$.

Thus to prove the first assertion, it suffices to establish the formula 
\begin{equation}\label{key formula}\tag{\dag}
\hom_{Sh_c(V_2)}(u_!\C_U,\Upsilon(\pi_{V_2}(L))) \simeq
\hom_{Sh_c(V_1)}(\interiorsymbol (v)_*\C_{\interiorsymbol (U^\circ)}, \Upsilon(\pi_{V_1}(L))).
\end{equation}
 compatibly for all open convex cones.

In fact, it suffices to 
 establish formula~(\ref{key formula}) for any collection of  open cones as long as they 
 generate the conic topology. For technical convenience,
 we will focus on open cones $u:U\hra V_2$
which become identified $U\simeq Q^n_\xi$
 with the standard open quadrant 
$$
q:Q^n_\xi = \{ (\xi_1, \ldots, \xi_n)\in \R_\xi^n | \xi_i > 0\} \hra \R_\xi^n
$$
under some linear isomorphism $V_2\simeq \R_\xi^n$.
We refer to such open cones as open quadrant cones.
  Note that the interior of the closed polar cone of the standard open quadrant $q:Q^n_\xi\hra \R^n_\xi$ 
  is nothing other than
  the standard open quadrant $q:Q^n_x\hra \R^n_x$.
  The collection of open quadrant cones, together with $V_2$ itself,
generate the conic topology.

It will be useful to specialize further to a particular collection of open quadrant cones.
For any $\delta_x,\delta_\xi\in\R^+$,
consider the dilated brane
$$
L(\delta_x,\delta_\xi) =\alpha_x({\delta_x})\alpha_\xi({\delta_\xi})L \hra V_1\times V_2.
$$
Recall that since $L$ is balanced, there exists a biconic Lagrangian 
$\Lambda\hra V_1\times V_2$ such that
for every neighborhood $\CN_{axes}$ of the closure of the axes 
$\ol V_1\times\{0\}$, $\{0\}\times V_2$, and every neighborhood $\CN_{cone}$
of $\Lambda$, there exists 
$\delta_1, \delta_2\in \R^+$ such that the dilated brane $L(\delta_1, \delta_2)$
lies in the union of the neighborhoods
$$
L(\delta_1, \delta_2) \subset \CN_{axes} \cup \CN_{cone}.
$$

Given an open quadrant cone $u:U\hra V_2$, consider the biconic Lagrangian 
$$\Lambda_U\hra T^*\R^n_\xi \simeq
V_1\times V_2$$ 
obtained by taking the union of the conormals to the facets
of the boundary $\del U \hra \R^n_\xi$.
We will focus on open quadrant cones $u:U\hra V_2$ such that
$\Lambda_U$ is disjoint from $\Lambda$ away from $\CN_{axes}$. By a dimension count,
one can check that this is a generic condition. 
Hence 
the collection of all such open quadrant cones, together with $V_2$ itself,
generate the conic topology.

Now without loss of generality, 
 to establish formula~(\ref{key formula}) for a given generic quadrant cone $u:U\hra V_2$, 
 it suffices to choose an identification $V_2\simeq \R^n_\xi$, and to
establish (\ref{key formula}) for the standard open quadrant $q:Q^n_\xi\hra \R^n_\xi$.
This case of  formula~(\ref{key formula}) will occupy the remainder of our arguments. 
 We leave it to the reader to handle the case $U=\R^n_\xi$ itself,
  and to check that our constructions are compatible with inclusions.

\medskip

Thus our aim is to show that 
there is a quasi-isomorphism
\begin{equation}\label{key formula specialized}\tag{\ddag}
\hom_{Sh_c(\R^n_\xi)}(q_!\C_{Q^n_\xi},\Upsilon(\pi_{V_2}(L))) \simeq
\hom_{Sh_c(\R^n_x)}(q_*\C_{Q^n_x}, \Upsilon(\pi_{V_1}(L))).
\end{equation}

Our strategy will be to construct a brane in $\R^n_x\times \R^n_\xi$ such that
both sides of formula~(\ref{key formula  specialized}) are quasi-isomorphic to its Floer pairing
with a dilation of $L$. 

Fix a pair $\varepsilon_x, \vareps_\xi\in \R$
(to be
specialized to the case $\vareps_x < 0$, $\vareps_\xi > 0$ momentarily), 
and consider the open subsets
$$
q({\vareps_x}): Q_x^n({\vareps_x}) =\{x\in \R^n_x | x_i >\vareps_x, \mbox{ for $i=1,\ldots, n$}\} 
\hra \R^n_x
$$
$$
q({\vareps_\xi}): Q_\xi^n({\vareps_\xi}) =\{\xi\in \R^n_\xi | \xi_i>\vareps_\xi,  \mbox{ for $i=1,\ldots, n$}\} 
\hra \R^n_\xi
$$
By definition, we have $Q_x^n(0) = Q_x^n$ and $Q_\xi^n(0) = Q_\xi^n$.

Consider inside of $\R^n_x\times \R^n_\xi$ the Lagrangian
$$
P({\vareps_x,\vareps_\xi}) = \{(x, \xi)\in \R^n_x\times \R^n_\xi | 
x_i >\vareps_x , (x_i-\vareps_x)(\xi_i-\vareps_\xi) = 1, 
 \mbox{ for $i=1,\ldots, n$}\}
$$
equipped with the brane structure
coming from its identification with the standard brane 
$$L_{Q^n_x({\vareps_x})*}\hra T^*\R^n_x,$$
or equivalently, its identification with the costandard brane 
$$
L_{Q^n_\xi({\vareps_\xi})!}\hra T^* \R^n_\xi.
$$

By construction, 
for fixed $\vareps_x < 0$, $\vareps_\xi > 0$, 
and sufficiently small 
$\delta_x,\delta_\xi\in\R^+$,
the boundary of the dilated brane $L(\delta_1, \delta_2)$ 
 does not intersect the boundary of $P(\vareps_x,\vareps_\xi)$.
 Thus it makes sense to 
consider the Floer complex
\begin{equation}\label{key floer complex}\tag{Fl}
\hom_{F(\R^n_x\times  \R^n_\xi)}(P({\vareps_x,\vareps_\xi}) ,L(\delta_x,\delta_\xi) ),
\mbox{ for sufficiently small $\delta_x,\delta_\xi \in \R^+$}.
\end{equation}

We claim that 
both sides of formula~(\ref{key formula specialized})
are quasi-isomorphic to the Floer complex~(\ref{key floer complex}).
We will first explain why the right hand side of~(\ref{key formula  specialized})
is quasi-isomorphic to~(\ref{key floer complex}), and
then give the parallel arguments for the left hand side.

\medskip

Thus our immediate aim is to show that  for fixed $\vareps_x < 0$, $\vareps_\xi > 0$, 
and sufficiently small $\delta_x,\delta_\xi \in \R^+$,
there is a quasi-isomorphism
\begin{equation}\label{identify rhs}\tag{rhs}
\hom_{F(\R^n_x\times  \R^n_\xi)}(P({\vareps_x,\vareps_\xi}) ,L(\delta_x,\delta_\xi) )
\simeq
\hom_{Sh_c(\R^n_x)}(q_*\C_{Q^n_x}, \Upsilon(\pi_{V_1}(L))).
\end{equation}

Let us unpack the right hand side of the sought after identity~(\ref{identify rhs}).
To that end, let us introduce the translated variables $\hat x_i = x_i - \vareps_x$,
for $i=1, \ldots, n$.
For large $r_x\in\R^+$, consider the truncations
$$
\xymatrix{
Q^n_x({\vareps_x})\cap \{|\hat x|^2<r_x\}
 \ar@{^(->}[r]^-{c} & 
 Q^n_x({\vareps_x})\cap \{ |\hat x|^2\leq r_x\}
\ar@{^(->}[r]^-{d} & \R^n_x,
}
$$
and define the pushforward
$$ 
\CR{({\vareps_x}, r_x)} = d_*c_!\C_{Q^n_x({\vareps_x})\cap  \{|\hat x|^2<r_x\}}
\in Sh_c(\R^n_x).
$$ 
Consider the corresponding
  brane $R{(\varepsilon_x, r_x)}=\mu_{\R^n_x}(\CR{({\vareps_x}, r_x)} )$
  with underlying Lagrangian 
  $$
R{(\varepsilon_x, r_x)}
= \{(x,\xi)\in \R^n_x\times \R^n_\xi | 
\hat x_i> 0, \xi\hat x_i(r_x-|\hat x|^2) = 1, |\hat x|^2 < r_x
\}
  \hra \R^n_x\times \R^n_\xi.
  $$

\medskip

As long as $\delta_\xi\in \R^+$ is sufficiently small,
by Lemma~\ref{lem sections calc}, Proposition~\ref{prop dilate}, standard adjunctions,
and the definition of $\pi_{V_1}$,
 we have quasi-isomorphisms
\begin{eqnarray*}
\hom_{Sh_c(\R^n_x)}(q_*\C_{Q^n_x}, \Upsilon(\pi_{\R^n_x}(L)))
& \simeq & 
\hom_{Sh_c(E)}(\CR{({\vareps_x}, r_x)} ,\pi_{V_1}(L(\delta_x, \delta_\xi)))\\
& \simeq &
\hom_{F(\R^n_x\times  \R^n_\xi)}(R{(\varepsilon_x, r_x)}, L(\delta_x, \delta_\xi)).
\end{eqnarray*}

Thus to establish the sought after identity~(\ref{identify rhs  n=1}),
it suffices to establish a quasi-isomorphism
\begin{equation}\label{identify rhs reinterpret}\tag{$\star_x$}
\hom_{F(\R^n_x\times  \R^n_\xi)}(P({\vareps_x,\vareps_\xi}) ,  L(\delta_x,\delta_\xi))
\simeq
\hom_{F(\R^n_x\times  \R^n_\xi)}(R{(\varepsilon_x, r_x)}, L(\delta_x, \delta_\xi)).
\end{equation}

By construction,
we can find a $[0,1]$-family of branes 
  $
  \fP_t\hra \R^n_x\times \R^n_\xi
  $
  such that 
  $$
   \fP_0({\vareps_x,\vareps_\xi}) =  P({\vareps_x,\vareps_\xi})
   \qquad
    \fP_1({\vareps_x,\vareps_\xi}) = R{(\varepsilon_x, r_x)}
    $$
    and $\fP_t$ is non-characteristic with respect to $L(\delta_x,\delta_\xi)$.
    In other words, the boundary of $\fP_t$ is disjoint from the boundary of 
     $L(\delta_x,\delta_\xi)$, for all time $t\in [0,1]$.
     To explicitly define $\fP_t$, one can exploit that an open quadrant
     cone is a product of one-dimensional cones.

Thus by Proposition~\ref{prop brane non-char in general}, 
we have the sought after identity~(\ref{identify rhs reinterpret}),
and in turn the identity~(\ref{identify rhs}).

\medskip

Next we give parallel arguments explaining
why the left hand side of~(\ref{key formula  specialized})
is quasi-isomorphic to~(\ref{key floer complex}).
So we need to verify that  for fixed $\vareps_x < 0$, $\vareps_\xi > 0$, 
and sufficiently small $\delta_x,\delta_\xi \in \R^+$,
there is a quasi-isomorphism
\begin{equation}\label{identify lhs}\tag{lhs}
\hom_{F(\R^n_x\times  \R^n_\xi)}(P({\vareps_x,\vareps_\xi}) ,L(\delta_x,\delta_\xi) )
\simeq
\hom_{Sh_c( \R^n_\xi)}(u_!\C_{\R^+_\xi},\Upsilon(\pi_{2}(L))).
\end{equation}

Let us unpack the right hand side of the sought after identity~(\ref{identify lhs}).
To that end, 
let us introduce the translated variables $\hat \xi_i = \xi_i - \vareps_\xi$,
for $i=1, \ldots, n$.
For large $r_\xi\in\R^+$, consider the truncation
$$
c:Q^n_\xi(\vareps_\xi)\cap \{ |\hat\xi|^2< r_\xi\} \hra   \R^n_\xi
$$
and define the pushforward
$$ 
\CT{(\vareps_\xi, r_\xi)} = c_!\C_{Q^n_\xi(\vareps_\xi)\cap \{ |\hat \xi|^2< r_\xi\} }\in Sh_c( \R^n_\xi).
$$ 
Consider the corresponding
costandard  brane $T{(\vareps_\xi, r_\xi)} =\mu_{ \R^n_\xi}(\CT{(\vareps_\xi, r_\xi)} )$
  with underlying Lagrangian 
  $$
 T{(\vareps_\xi, r_\xi)}  = 
 \{(x,\xi)\in \R^n_x\times \R^n_\xi|
\hat \xi_i> 0,  \hat \xi_i x_i (r_\xi - |\hat \xi|^2)= r_\xi - |\hat \xi|^2 + 2\hat \xi_i^2,
 |\hat \xi|^2 < r_\xi\}
%\mbox{ for $i=1, \ldots, n$}\}.
%  \hra \R^n_x\times \R^n_\xi.
  $$

\medskip

As long as $\delta_x\in \R^+$ is sufficiently small,
by Lemma~\ref{lem sections calc}, Proposition~\ref{prop dilate}, standard adjunctions,
and the definition of $\pi_{V_2}$,
 we have quasi-isomorphisms
\begin{eqnarray*}
\hom_{Sh_c( \R^n_\xi)}(u_!\C_{\R^+_\xi}, \Upsilon(\pi_{2}(L)))
& \simeq & 
\hom_{Sh_c(E)}(\CT{( \vareps_\xi, r_\xi)} ,\pi_{V_2}(L(\delta_x, \delta_\xi)))\\
& \simeq &
\hom_{F(\R^n_x\times  \R^n_\xi)}( T{( \vareps_\xi, r_\xi)} , L(\delta_x, \delta_\xi)).
\end{eqnarray*}

Thus to establish the sought after identity~(\ref{identify lhs  n=1}),
it suffices to establish a quasi-isomorphism
\begin{equation}\label{identify lhs reinterpret}\tag{$\star_\xi$}
\hom_{F(\R^n_x\times  \R^n_\xi)}(P({\vareps_x,\vareps_\xi}) ,  L(\delta_x,\delta_\xi))
\simeq
\hom_{F(\R^n_x\times  \R^n_\xi)}(T(\vareps_\xi, r_\xi) , L(\delta_x, \delta_\xi)).
\end{equation}

By construction, we can find a $[0,1]$-family of branes 
  $
  \fP_t\hra \R^n_x\times \R^n_\xi
  $
  such that 
  $$
   \fP_0({\vareps_x,\vareps_\xi}) =  P({\vareps_x,\vareps_\xi})
   \qquad
    \fP_1({\vareps_x,\vareps_\xi}) =T( \vareps_\xi, r_\xi)   
     $$
    and $\fP_t$ is non-characteristic with respect to $L(\delta_x,\delta_\xi)$.
    In other words, the boundary of $\fP_t$ is disjoint from the boundary of 
     $L(\delta_x,\delta_\xi)$, for all time $t\in [0,1]$.
  To explicitly define $\fP_t$, one can exploit that an open quadrant
     cone is a product of one-dimensional cones.

Thus by Proposition~\ref{prop brane non-char in general}, 
we have the sought after identity~(\ref{identify lhs reinterpret}),
and in turn the identity~(\ref{identify lhs}).

\medskip

Putting the preceding together, we have identified the left~(\ref{identify lhs n=1}) and right hand
~(\ref{identify rhs n=1}) sides
of  formula~(\ref{key formula specialized dim = 1}) with the 
Floer complex~(\ref{key floer complex dim = 1}).
As mentioned above, we leave to the reader to modify the arguments
to establish formula~(\ref{key formula dim = 1}) in the other cases,
and to check its compatibility with inclusions.
This establishes the first assertion of the theorem, and consequently the second.
\end{proof}

%%%%%%%%%%%%%%%%%%%%%%%%%%%%%%%%%%%%%%%%%%%%%%%%
%%%%%%%%%%%%%%%%%%%%%%%%%%%%%%%%%%%%%%%%%%%%%%%%
%%%%%%%%%%%%%%%%%%%%%%%%%%%%%%%%%%%%%%%%%%%%%%%%
%%%%%%%%%%%%%%%%%%%%%%%%%%%%%%%%%%%%%%%%%%%%%
%%%%%%%%%%%%%%%%%%%%%%%%%%%%%%%%%%%%%%%%%%%%%
%%%%%%%%%%%%%%%%%%%%%%%%%%%%%%%%%%%%%%%%%%%%%

\section{Springer theory}\label{sec springer}

In this section, we give an application of the preceding theory to branes
in the cotangent bundle of a Lie algebra.

%%%%%%%%%%%%%%%%%%%%%%%%%%%%%%%%%%%%%%%%%%%%%

\subsection{Springer theory via sheaves}
We begin with a brief review of the Springer theory
of Weyl group representations following Lusztig~\cite{L}, Borho-MacPherson~\cite{BoMac}, 
Ginzburg~\cite{Ginz}, and Hotta-Kashiwara~\cite{HK}. 
For a expanded summary, the reader could consult
the discussion in~\cite{Gr}.

\medskip

Let $G$ be a reductive complex algebraic group with Lie algebra $\g$
and Weyl group $W$.
Let $\CB$ be the flag variety of Borel subalgebras $\b\subset \g$.
For a Borel subalgebra $\b\subset \g$, let $\fu\subset \b$ be its unipotent radical,
and let $\fh=\b/\fu$ be the universal Cartan algebra.

\medskip

Let $\wt \g=\{(X,\b) | x\in \b\in \CB\}$ be the Grothendieck-Springer
space of pairs.
Let $\CN\subset \g$ be the nilpotent cone, and $\wt \CN=\{(x,\b) | x\in \b\cap \CN, \b\in \CB\}$ 
the Springer resolution. The following diagram summarizes some of the well-known relations among
these spaces:

$$
\xymatrix{
\CB & \ar[d]_-{\mu_\CN} \ar[l]_-{p} T^*\CB \simeq \wt \CN \ar[r]^-{\wt\imath} 
&  \wt \g  \ar[r]^-{\wt q} \ar[d]_{\mu_\g}
& \mathcal \fh \ar[d]_-{s}\\
&  \CN \ar[r]^{i} & \g \ar[r]^-{q} & \fh\inv W
}
$$

Here $p$, $\mu_\CN$, $\mu_\g$, and $s$ are the obvious projections, $\wt \imath$ and $i$
are the obvious
inclusions, $\wt q$ assigns 
to $(x,\b)$ the class of $x$ in $\b/\fu$, and $q$ is the affine adjoint quotient map.

\medskip

Let $\g_{rs}\subset \g$ denote the regular semisimple locus, and let 
$\fh_{r}\subset \fh$ denote the $W$-regular locus. The right-hand portion of the
above diagram restricts to the following Cartesian diagram whose vertical arrows
are $W$-torsors:

$$\xymatrix{
 \wt \g_{rs}  \ar[r]^-{\wt q} \ar[d]_{\mu_\g}
& \mathcal \fh_r \ar[d]_-{s}\\
 \g_{rs} \ar[r]^-{q} & \fh_r  \inv W
}
$$

Consider the constant perverse sheaves 
$$\C_{\wt\g}[\dim_\C\g]\in Perv(\wt\g)
\qquad
\C_{\wt \CN}[\dim_\C \CN] \in Perv(\wt\CN),
$$ 
and their pushforwards
$$
S_\g = \mu_{\g !} \C_{\wt \g}[\dim_\C \g]
\qquad
S_\CN = \mu_{\CN !} \C_{\wt \CN}[\dim_\C \CN]
$$
We refer to $S_\g$ as the global Springer sheaf, and $S_\CN$ as the nilpotent
Springer sheaf.

The map $\mu_{\g}$ is small and the map $\mu_{\CN}$ is semismall.
Thus the complexes $S_{\g}, S_\CN$ are in fact
perverse, and $S_{\g}$ is the middle extension
of the local system
$$
\CL_{rs}= \mu_{\g ! }\C_{\wt\g_{rs}}[\dim_\C\g] \in Perv(\g_{rs}).
$$

The Weyl group $W$ acts on $\CL_{rs}$ by deck transformations, and hence on $S_\g$
by the functoriality of the middle extension. The action identifies the group algebra
$\C[W]$ with the (degree zero) endomorphisms of $S_\g$.

\medskip

There are two immediate ways in which $S_\g$ and $S_{\CN}$ are related.
First, by proper base change, restriction along the inclusion $i:\CN\hra \g$ induces an identification
$$
i^*S_\g[\dim_\C \fh] \simeq S_{\CN}.
$$
Second, if we identify $\g$ with its dual $\g^*$ via the Killing form, then we can regard the 
shifted Fourier
transforms as endofunctors on perverse sheaves on $\g$. With this understanding,
the shifted Fourier transforms exchange the two perverse sheaves:
$$
(S_\CN)^\w[\dim_\C \g] \simeq S_\g
\qquad
(S_\g)^\vee[-\dim_\C \g] \simeq S_\CN.
$$

\medskip

It is also possible
to construct $S_{\CN}$ as the nearby cycles
of the constant perverse sheaf $\C_{\g_{rs}}[\dim_\C\g]$ in the (multi-dimensional) 
family defined by the adjoint quotient map $q:\g \to \fh\inv W$. 
Namely, we have an identification
$$
R\psi(\C_{\g_{rs}}[\dim_\C\g]) \simeq S_\CN
$$
where $R\psi$ can be taken to be the nearby cycles in the direction of
any line $\mathbb A^1\hra \fh \inv W$ such that $\mathbb A^1\setminus \{0\} \hra \fh_r\inv W$.
%any regular semisimple conjugacy 
%class $\CO_\lambda =q^{-1}(\lambda)\hra \g_{rs}$, for any $\lambda\in \fh_{r}/W$.
In this realization, the braid group $B_W\simeq \pi_1(\fh_r\inv W, pt)$
acts on the nearby cycles by monodromy transformations. This action factors through the projection 
$B_W\to W$ giving another realization of the $W$-action on $S_\CN$.

%%%%%%%%%%%%%%%%%%%%%%%%%%%%%%%%%%%%%%%%%%%%%
%%%%%%%%%%%%%%%%%%%%%%%%%%%%%%%%%%%%%%%%%%%%%
%%%%%%%%%%%%%%%%%%%%%%%%%%%%%%%%%%%%%%%%%%%%%

\subsection{Cotangent bundle of adjoint quotient}

Our aim here is to explain how we will think about the cotangent bundle of 
the adjoint quotient $\g/G$. The discussion is included to elucidate what follows
and is not needed in any technical sense. 

\subsubsection{General formalism}
Suppose the group $G$ acts on a smooth manifold $X$.  
Then the induced action of $G$ on the cotangent bundle $T^*X$ preserves
the canonical one-form $\theta$, and hence the
symplectic form $\omega=d\theta$ as well.

Consider the moment map $m : T^*X \rightarrow \g^*$ for the action of $G$.
It is characterized by two properties: (1) $m$ is $G$-equivariant with respect to the coadjoint action
on $\g^*$,
and (2) its differential $dm$ satisfies the contraction formula
$$
\langle dm, x\rangle = \iota_{\widetilde x} \omega, 
\quad
\mbox{ for $x\in\g$},
$$
where $\widetilde x$ is the vector field on $T^*X$ corresponding to $x\in \g.$
Note that $\mathcal L_{\widetilde x}\theta = 0$ and thus
$
\iota_{\widetilde x}\omega = d \langle \theta,\widetilde x\rangle.
$
Therefore one can say that
$m = \theta$ in the sense that $\langle m, x\rangle = \iota_{\widetilde x}\theta.$

\medskip

Consider the quotient stack $X/G$. By the cotangent stack $T^*(X/G)$, we mean
the result of performing Hamiltonian reduction at the zero moment map value. Namely,
we take the zero-fiber of the moment map $m^{-1}(0)\subset T^*X$, and then pass to the quotient stack
$T^*(X/G) = m^{-1}(0)/G$. 

Two immediate comments are in order.

First, there is no reason that the zero-fiber $m^{-1}(0)$
or that $T^*(X/G)$
should be smooth, and in our case of interest this is not so. 
This should not cause any consternation since we will always be working
in the ambient target $T^*X$. 
To be precise, 
consider the correspondence
$$
\xymatrix{
T^*(X/G) & \ar[l]_-{\ell} T^*(X/G) \times_{X/G} X \ar[r]^-{r} & T^*X
}
$$
where $\ell$ is the obvious smooth projection, and $r$ is the obvious inclusion.
This is nothing more than the Lagrangian correspondence associated to the 
smooth projection $X\to X/G$. 
Note that applying the correspondence to $T^*(X/G)$ itself recovers the zero-fiber of the moment map
$m^{-1}(0) = r(\ell^{-1}(T^*(X/G))$.
In short, all of our concrete geometric arguments will take place in $T^*X$,
and we only keep track of $T^*(X/G)$ to help us understand what is going on.
For example, by a smooth Lagrangian $\CL\hra T^*(X/G)$,
we will mean a substack such that 
$$
L = r(\ell^{-1}(\CL)) \hra T^*X
$$ 
is a smooth Lagrangian. 

Second, just as one uses
the sophisticated technology of stacks to deal with quotients, one should set the moment map $m$
equal to zero
in the appropriate homotopical sense. But since our aims are purely topological,
we can safely ignore this issue and work naively with $T^*(X/G)$ as a stack rather than
as a derived stack. In other words, the reader unaccustomed to this kind of enhancement
can safely ignore the issue, and in particular this comment itself.

\subsubsection{Case of adjoint quotient}

Now let us apply the preceding to the case $X=\g$ with the adjoint action of $G$.

Under the identification of $\g$ with its dual $\g^*$ via the Killing form,
 the moment map becomes the Lie bracket 
$$m:\g\times \g\simeq T^*\g\to \g^*\simeq \g
\qquad
m(x,\xi) = [x,\xi].
$$

Thus the zero-fiber is
the space of commuting pairs
$$
m^{-1}(0) = \{ (x,\xi)\in \g\times\g |  [x,\xi] = 0\}\subset \g\times\g,
$$
and the cotangent bundle is the diagonal adjoint quotient
$$
T^*(\g/G) = \{ (x,\xi)\in \g\times\g |  [x,\xi] = 0\}/G.
$$

%%%%%%%%%%%%%%%%%%%%%%%%%%%%%%%%%%%%%%%%%%%%%

%%%%%%%%%%%%%%%%%%%%%%%%%%%%%%%%%%%%%%%%%%%%%%

\subsection{Quantization of regular Hitchin fibers}

Choose an embedding $\fh\hra  \g$ of the universal Cartan,
and let $H\hra G$ be the corresponding maximal torus.

Recall that $q:\g\to \fh\inv W$ denotes the affine adjoint quotient arising from
Chevalley's identification $\C[\g]^G\simeq \C[\fh]^W$.

We use the phrase Hitchin fibration to refer to the map
$$
\CH:T^*(\g/G) \to \fh\inv W
\qquad
\CH(x,\xi) = q(\xi).
$$
The definition and nomenclature
come from the
observation that the stack $\g/G$ is isomorphic to the stratum of semistable $G$-bundles
on a cuspidal elliptic curve (or equivalently, 
 bundles whose pullback to the normalization $\mathbb P^1$
are trivializable).

\medskip

Recall that $\fh_r\subset \fh$ denotes the $W$-regular locus.
For $\lambda\in \fh_r\inv W$, we refer to the inverse image 
$\CL_\lambda = \CH^{-1}(\lambda)\subset T^*(\g/G)$
as a regular Hitchin fiber.
In terms of our usual identifications, we have the explicit description
$$
\CL_\lambda = \{ (x,\xi)\in \g\times\g | [x,\xi] = 0, q(\xi) = \lambda \}/ G.
$$
Our immediate goal is to show that $\CL_\lambda$ is an exact Lagrangian and carries a canonical brane
structure. (General formalism shows that it is Lagrangian, but we will give an explicit reason
in a moment.)

\medskip

To clarify the discussion, let us denote the Lie algebra by $\g_x$ and its dual by $\g_\xi$.
(We will continue to identify them via the Killing form.)
Now let us switch our perspective, and consider $T^*(\g_x/G)$ as the cotangent
to the coadjoint quotient $\g_\xi/G$. Then by construction,
the Hitchin fiber $\CL_\lambda\hra T^*(\g_x/G)$
is nothing more than the conormal to the coadjoint orbit $\CO_\lambda/G\hra \g_\xi/G$.
Thus it is an exact Lagrangian and carries a canonical brane structure. 
 By this, we mean that the base change
$$
L_\lambda = r(\ell^{-1}(\CL_\lambda)) \subset \g_x\times \g_\xi
$$ 
carries a canonical $G$-equivariant brane structure. 
Namely, as the conormal to a closed submanifold, it comes equipped
with a standard brane structure.

To keep with usual conventions, 
we will shift the standard brane structure of $L_\lambda$ by the amount 
$\dim_\C\fh = \dim_\C\g-\dim_\C\CO_\lambda$.
We use the phrase Hitchin brane to refer to $L_\lambda \hra \g_x\times\g_\xi$ 
with this brane structure.
Thus by construction, the constructible complex 
$\pi_{\g_\xi}(L_\lambda)\in Sh_c(\g_\xi)$ is simply the shifted constant
sheaf $\C_{\CO_\lambda}[-\dim_\C\fh]$.

\medskip

Now 
we will apply Theorem~\ref{main thm}
to the Hitchin brane $L_\lambda$.
Our aim is to identify the constructible complex
$\CF_\lambda=\pi_{\g_x}(L_\lambda)\in Sh_c(\g)$.
Observe that $L_\lambda$ is conic along the factor $\g_x$,
and hence $\CF_\lambda$ is conic as well. We refer to $\CF_\lambda$ as the Hitchin sheaf.

 \begin{thm}
The Hitchin sheaf  $\CF_\lambda$ is isomorphic to the global Springer sheaf $ S_\g$
 \end{thm}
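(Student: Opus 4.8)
The plan is to obtain $\CF_\lambda$ from the Hitchin brane $L_\lambda$ by a single application of Theorem~\ref{main thm}, and then to identify the resulting conic limit with the nilpotent Springer sheaf $S_\CN$ via the nearby-cycles description recalled above. First one must record that $L_\lambda$ is a balanced brane, so that the theorem applies. Writing $L_\lambda = r(\ell^{-1}(\CL_\lambda))\hra \g_x\times\g_\xi$ as before, a point of $\ol L_\lambda\cap C$ (where $C=\g_x^\infty\times\g_\xi^\infty$ is the corner) is a limiting pair of directions $([x_\infty],[\xi_\infty])$ of points $(x_k,\xi_k)\in L_\lambda$ with $|x_k|,|\xi_k|\to\infty$. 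The relation $[x_k,\xi_k]=0$ forces $[x_\infty,\xi_\infty]=0$, and holding $q(\xi_k)=\lambda$ fixed while $|\xi_k|\to\infty$ forces $\xi_\infty$ to be nilpotent (the asymptotic directions of a fixed regular semisimple orbit lie in $\CN$, since the leading forms of $q(\xi)-\lambda$ are the fundamental invariants). A dimension count for $\{([x_\infty],[\xi_\infty]) : [x_\infty,\xi_\infty]=0,\ \xi_\infty\in\CN\}$, using $\dim\CO_{nilp}+\dim Z_\g = \dim\g$ stratum by stratum, then gives $\dim L_{\lambda,C} = \dim L_\lambda - 2$, so $L_\lambda$ is balanced. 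Everything below is carried out $G$-equivariantly, which affects none of the arguments.

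Now observe that $L_\lambda$ is conic along the factor $\g_x$, so $\CF_\lambda=\pi_{\g_x}(L_\lambda)$ is conic and $\Upsilon(\CF_\lambda)\simeq\CF_\lambda$, while by construction $\pi_{\g_\xi}(L_\lambda)\simeq\C_{\CO_\lambda}[-\dim_\C\fh]$. The second identity of Theorem~\ref{main thm} (equivalently the Corollary, with the two factors interchanged so that $\g_x$ plays the role of the conical factor) then reads
$$
\CF_\lambda \;\simeq\; \bigl(\Upsilon(\C_{\CO_\lambda}[-\dim_\C\fh])\bigr)^\vee .
$$
So the theorem reduces everything to computing the conic limit $\Upsilon(\C_{\CO_\lambda}[-\dim_\C\fh])$ and then applying the Fourier transform.

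To compute the conic limit, I would use the identification $\Upsilon\simeq R\psi\circ\alpha^*[1]$ for the scaling $\C^\times$-action $\alpha$ on $\g_\xi$; here $\C_{\CO_\lambda}$ is equivariant up to rescaling the parameter, since $\alpha^t_*\C_{\CO_\lambda}\simeq\C_{\CO_{t\lambda}}$. The key geometric point is that the scaling action on $\g_\xi$ descends through the adjoint quotient $q$ to the corresponding scaling on $\fh\inv W$, whose orbit closure through $\ol\lambda=q(\lambda)$ is a line $\ell_\lambda\hra\fh\inv W$ meeting $\{0\}$ exactly at the origin, with $\ell_\lambda\setminus\{0\}\hra\fh_r\inv W$ (scalings of a regular element are regular) and $q^{-1}(0)=\CN$. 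Thus, after reparametrizing by $\ell_\lambda$, the family $\alpha^*\C_{\CO_\lambda}$ is precisely the restriction of $q$ to $q^{-1}(\ell_\lambda)$ equipped with the constant sheaf on its generic part $q^{-1}(\ell_\lambda)\cap\g_{rs}$. Hence $\Upsilon(\C_{\CO_\lambda})$ is, up to a cohomological shift, the nearby cycles of $\C_{\g_{rs}}[\dim_\C\g]$ in the direction of $\ell_\lambda$, which by the review above is $S_\CN$. Tracking the shifts — the $\dim_\C\fh$-shift built into the Hitchin brane structure is calibrated for exactly this purpose — one gets $\Upsilon(\C_{\CO_\lambda}[-\dim_\C\fh])\simeq S_\CN[\dim_\C\g]$.

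Finally, since $S_\CN$ and $S_\g$ are $\C^\times$-conic (the Springer resolution is equivariant for the scaling on $\g$), the Fourier transform and its inverse agree on them up to a shift, and the exchange relations $(S_\CN)^\w[\dim_\C\g]\simeq S_\g$ and $(S_\g)^\vee[-\dim_\C\g]\simeq S_\CN$ of the Springer review yield $(S_\CN)^\vee\simeq S_\g[-\dim_\C\g]$. Combining,
$$
\CF_\lambda\;\simeq\;\bigl(S_\CN[\dim_\C\g]\bigr)^\vee\;\simeq\;S_\g .
$$
The main obstacle is the third step: faithfully matching the scaling-based conic-limit operation $\Upsilon$, applied to a single regular orbit, with the $q$-based nearby-cycles realization of $S_\CN$ from the review — in particular verifying that dilating $\CO_\lambda$ really does sweep out the preimage of a line through the origin of $\fh\inv W$, handling the role of the spherical compactification in the definition of $\Upsilon$, and keeping the cohomological shifts (and the $G$-equivariance) straight, which is where the chosen brane structure on $L_\lambda$ is designed to pay off.
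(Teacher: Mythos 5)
Your proposal follows essentially the same route as the paper: observe $L_\lambda$ is balanced (its corner lives inside projectivized commuting pairs with nilpotent $\xi$-component), apply Theorem~\ref{main thm} to transfer from $\pi_{\g_\xi}(L_\lambda)\simeq\C_{\CO_\lambda}[-\dim_\C\fh]$ to $\CF_\lambda$, compute the conic limit via the nearby-cycles realization of $S_\CN$, and finish with the Fourier-transform identity relating $S_\CN$ and $S_\g$. You supply more detail on the balancedness dimension count and on why the scaling of $\CO_\lambda$ matches the $q$-parametrized nearby cycles, and you invoke $\vee$ where the paper writes $\w$ (which is the more literal reading of the theorem as stated, given $L_\lambda$ is conic along the $\g_x$-factor); these are expository refinements of, not departures from, the paper's argument.
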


 \begin{proof}
By construction, the constructible complex 
shifted constant sheaf $\C_{\CO_\lambda}[-\dim_\C\fh]$.
Note that $L_\lambda$ is a balanced brane, since its corner consists of projectivized pairs
of commuting nilpotent elements. Thus Theorem~\ref{main thm} provides an identification
$$
\CF_\lambda \simeq (\Upsilon(\C_{\CO_\lambda}[-\dim_\C\fh]))^\w
$$
To complete the proof, we use the well-known identities of Springer theory
$$
\Upsilon(\C_{\CO_\lambda}[\dim_\C\CO_\lambda]) \simeq 
R\psi(\C_{\CO_\lambda}[\dim_\C\CO_\lambda] \simeq S_\CN
\qquad
(S_\CN)^\w[\dim_\C \g] \simeq S_\g.
$$
\end{proof}

The identification $\CF_\lambda\simeq S_\g$ 
is compatible with motions of the parameter $\lambda\in \fh_r\inv W$
as follows.
By construction, the identification
$\pi_{\g_\xi}(L_\lambda)\simeq \C_{\CO_\lambda}[-\dim_\C\fh]$
is compatible with parallel transport with respect to $\lambda\in \fh_r\inv W$.
For this, recall that from the perspective of $\g_\xi$, the brane $L_\lambda\hra T^*\g_\xi$
is nothing more than (a shift of) the standard brane with support the conormal
to $\CO_\lambda\hra \g_\xi$.
Now, under the Fourier transform, motions of $\C_{\CO_\lambda}[-\dim_\C\fh]$
induce the usual Weyl group action on $S_\g$ of Springer theory. 
This is equivalent to the fact that the monodromy braid group action on the nearby cycles
$R\psi(\C_{\CO_\lambda}[-\dim_\C\fh])$ descends to the usual Weyl group action.
Thus in conclusion, the proof of the theorem shows that the braid group action on $\CF_\lambda$
descends to the usual Weyl group action as well.

%%%%%%%%%%%%%%%%%%%%%%%%%%%%%%%%%%%%%%%%%%%%%%%%%%

%%%%%%%%%%%%%%%%%%%%%%%%%%%%%%%%%%%%%%%%%%%%%
%%%%%%%%%%%%%%%%%%%%%%%%%%%%%%%%%%%%%%%%%%%%%
%%%%%%%%%%%%%%%%%%%%%%%%%%%%%%%%%%%%%%%%%%%%%

\bibliographystyle{amsalpha}

\end{document}